\numberwithin{equation}{section}
\numberwithin{figure}{section}
\theoremstyle{plain}
\newtheorem{thm}{Theorem}[section]
  \theoremstyle{plain}
  \newtheorem{assumption}[thm]{Assumption}
  \theoremstyle{plain}
  \newtheorem{cor}[thm]{Corollary}
  \theoremstyle{remark}
  \newtheorem{rem}[thm]{Remark}
  \theoremstyle{plain}
  \newtheorem{prop}[thm]{Proposition}
  \theoremstyle{plain}
  \newtheorem{lem}[thm]{Lemma}
  \theoremstyle{plain}
  \newtheorem{fact}[thm]{Fact}
  \theoremstyle{definition}
  \newtheorem{defn}[thm]{Definition}
\numberwithin{equation}{subsection}
\numberwithin{figure}{section}
\definecolor{darkblue}{rgb}{0,0,0.5} 
\begin{document}

\title[Curvatures of self-conformal sets]{Fractal curvatures and Minkowski content of self-conformal sets}

\author{Tilman Johannes Bohl}

\thanks{Supported by grant DFG ZA 242/5-1. The author has previously worked
under the name Tilman Johannes Rothe.}

\email{Tilman.Bohl@uni-jena.de}

\address{Mathematical Institute, Friedrich Schiller University Jena, Germany.}

\keywords{self-similar set, self-conformal set, curvature, Lipschitz-Killing
curvature-direction measure, Minkowski content, conformal iterated
function system}

\subjclass[2000]{primary: 28A80, 28A75, 37A99; secondary: 28A78, 53C65, 37A30}

\maketitle
%comments: usage of symbols, ...; printornot: images are not printer-friendly; MpOrRaster: metapost images or rastered ones
\newcommand\comment[1]{}
\newcommand\printornot[1]{#1}
\newcommand\MpOrRaster[2]{#2}

\global\long\def\op#1{\operatorname{#1}}
\global\long\def\Int#1{\overset{\circ}{#1}}
\global\long\def\Frac{\op{frac}}
\global\long\def\esup{\op{ess\, sup}}
\global\long\def\var{\op{var}}
\global\long\def\diam{\op{diam}}
\global\long\def\orth{\operatorname{orth}}
\global\long\def\id{\operatorname{id}}
\global\long\def\dist{\operatorname{dist}}
\global\long\def\assign{\mathrel{:=}}
\global\long\def\G{G}
\global\long\def\H{\mathcal{H}}
\global\long\def\Hl{\H_{\G}}
\global\long\def\ra{\rightarrow}
\global\long\def\Rd{\mathbb{R}^{d}}
\global\long\def\mJ{F}
\global\long\def\mI{I}
\global\long\def\mIN{\mI^{\mathbb{N}}}
\global\long\def\mX{X}
\global\long\def\bV{V}
\global\long\def\mV{\bV_{MU}}
\global\long\def\mK{K}
\global\long\def\mfsym{\phi}
\global\long\def\Kf{\mK_{\mfsym}}
\global\long\def\mf#1{\mfsym_{#1}}
\global\long\def\apsym{\mbox{\rotatebox[origin=c]{180}{\ensuremath{\psi}}}}
\global\long\def\bpsym{\psi}
\global\long\def\ap#1#2{\apsym_{#1,#2}}
\global\long\def\bp#1#2{\bpsym_{#1,#2}}
\global\long\def\bps#1#2{\bpsym}
\comment{bps b/c unsure whether to abbreviate}\global\long\def\aplim#1#2{\ap{\widetilde{#1}}{#2}}
\global\long\def\bplim#1#2{\bp{\widetilde{#1}}{#2}}
\comment{p(word,point)(arg)}\global\long\def\ms{\sigma}
\global\long\def\msb{\ms_{\op{bi}}}
\global\long\def\zrall#1#2{\left\vert #1^{\prime}#2\right\vert }
\global\long\def\zr#1#2{\zrall{\mf{#1}}{#2}}
\global\long\def\zrap#1#2#3{\zrall{\ap{#1}{#2}}{#3}}
\global\long\def\zraplim#1#2#3{\zrap{\widetilde{#1}}{#2}{#3}}
\global\long\def\zrp#1#2#3{\zrall{\bp{#1}{#2}}{#3}}
\global\long\def\zrplim#1#2#3{\zrp{\widetilde{#1}}{#2}{#3}}
\global\long\def\mmc{\mu}
\global\long\def\mmi{\nu}
\global\long\def\mmbc{\mmc_{\op{bi}}}
\global\long\def\mmbi{\mmi_{\op{bi}}}
\global\long\def\icdens{p}
\global\long\def\icbdens{\icdens_{\op{bi}}}
\global\long\def\ze{r}
\global\long\def\Along{A}
\global\long\def\A#1#2#3{\Along\left(\left|#1-#2\right|,#3\right)}
\global\long\def\zD{D}
\global\long\def\R{R}
\global\long\def\Rvar{\epsilon}
\global\long\def\Rstart{\Rvar_{\max}}
\global\long\def\rmax{s_{\max}}
\global\long\def\rmin{\min_{i}\left\Vert \mf i^{\prime}\right\Vert }
\comment{R for renewal radii and Rvar for integration limits}\global\long\def\hi{q}
\comment{\hr,\hw integrand}\global\long\def\ex#1{\underline{#1}}
\comment{density \mmi over \mmc}\global\long\def\hw{\pi}
\global\long\def\hr{\rho}
\global\long\def\chunkh#1#2{\left(#1,#2\right]}
\global\long\def\chunk#1#2{1_{\left(#1,#2\right]}}
\global\long\def\entropy{H_{\mmi}}
\global\long\def\HG{\mathcal{H}_{\G}}
\global\long\def\holder{\gamma}
\global\long\def\myset{K}
\global\long\def\xhat{x}
\global\long\def\xplain{\hat{x}}
\global\long\def\rhat{\ze}
\global\long\def\rplain{\hat{\ze}}
\global\long\def\zhat{z}
\global\long\def\zplain{\hat{z}}
\global\long\def\yhat{y}
\global\long\def\yplain{\hat{y}}
\global\long\def\fibreavg#1#2{#1_{#2}^{\op{fib}}}

\begin{abstract}
For self-similar fractals, the Minkowski content and fractal curvature
have been introduced as a suitable limit of the geometric characteristics
of its parallel sets, i.e., of uniformly thin coatings of the fractal.
For some self-conformal sets, the surface and Minkowski contents are
known to exist. Conformal iterated function systems are more flexible
models than similarities. This work unifies and extends such results
to general self-conformal sets in $\Rd$. We prove the rescaled volume,
surface area, and curvature of parallel sets converge in a Cesaro
average sense of the limit. Fractal Lipschitz-Killing curvature-direction
measures localize these limits to pairs consisting of a base point
in the fractal and any of its normal directions. There is an integral
formula.

We assume only the popular Open Set Condition for the first-order
geometry, and remove numerous geometric assumptions. For curvatures,
we also assume regularity of the Euclidean distance function to the
fractal if the ambient dimension exceeds three, and a uniform integrability
condition to bound the curvature on {}``overlap sets''. A limited
converse shows the integrability condition is sharp. We discuss simpler,
sufficient conditions. 

The main tools are from ergodic theory. Of independent interest is
a multiplicative ergodic theorem: The distortion, how much an iterate
of the conformal iterated function system deviates from its linearization,
converges.
\end{abstract}

\section{Introduction}

\begin{wrapfigure}[17]{o}{0.37\textwidth}%
\printornot
{\MpOrRaster{\includegraphics[width=0.32\textwidth]{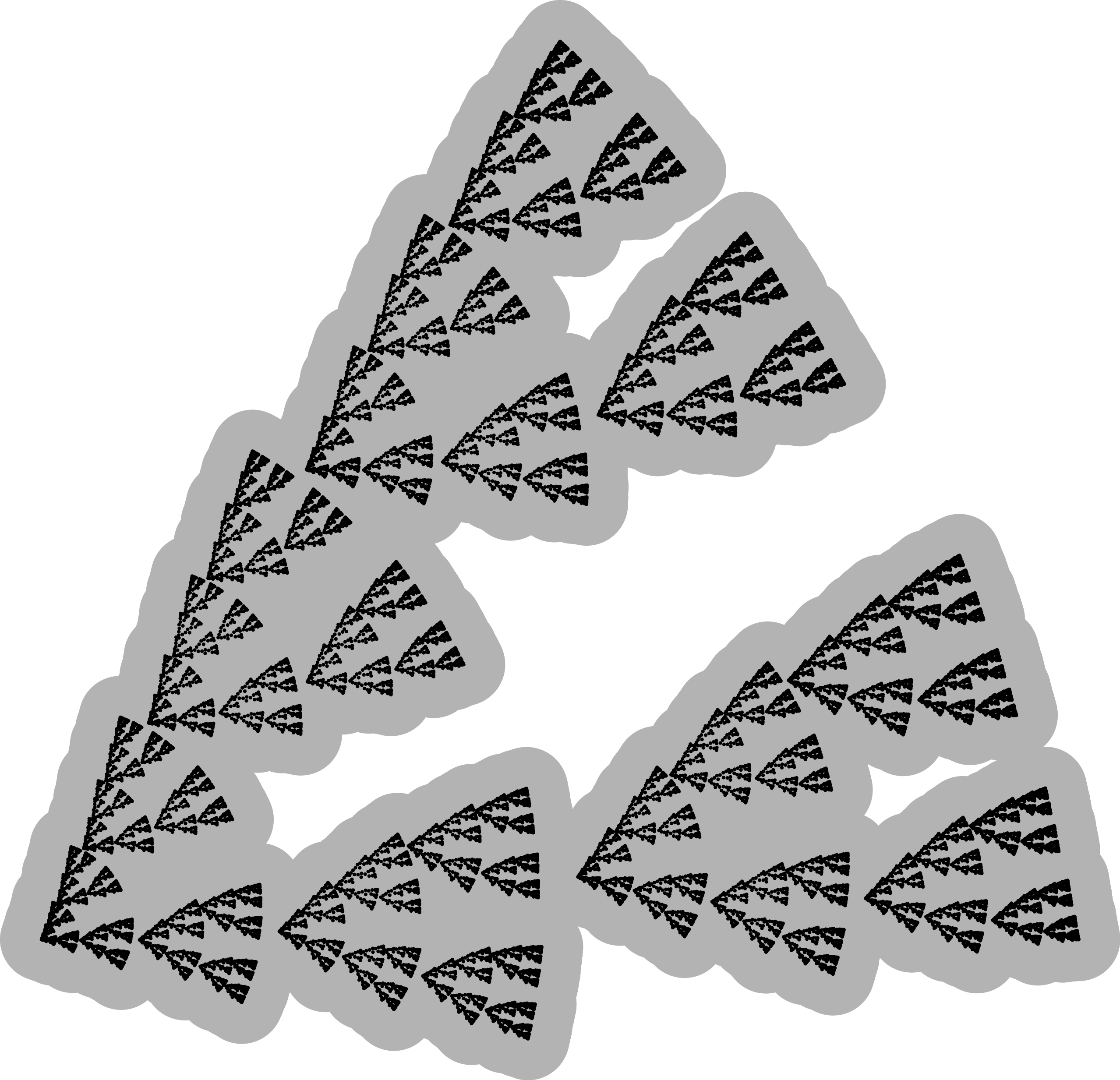}}{\includegraphics[width=0.32\textwidth]{conformal_parallel}}}\caption{~~~~~~~~~~~~~~~~~~~~~~~~~~~~~~~~~~~~~~~~~~~~~~~~~~~~~}
Dark: self-conformal set $\mJ$, IFS of three Möbius maps \eqref{eq:mobius transformation}.
Shaded: a parallel set $\mJ_{\ze}$.\end{wrapfigure}%
Let $\mJ\subseteq\Rd$ be a self-conformal set, i.e., invariant under
finitely many contractions $\mf i$ whose differentials $\mf i^{\prime}\left(x\right)$
are multiples of orthogonal matrices, \[
\mJ=\bigcup_{i\in\mI}\left\{ \mf ix\,:\, x\in\mJ\right\} .\phantom{\mJ=\bigcup_{i\in\mI}\left\{ \mf ix\,:\, x\in\mJ\right\} .}\]
 Any self-similar set $\mJ$ is a simple, {}``rigid'' special case.
We assume the well-known Open Set Condition to keep overlaps $\mf i\mJ\cap\mf j\mJ$
small.

Classical volume and curvature are not meaningfully applicable to
the geometry of $\mJ$. So we approximate $\mJ$ with thin {}``coatings'',
its parallel sets \[
\mJ_{\ze}\assign\left\{ x\,:\,\min_{y\in\mJ}\left|x-y\right|\leq\ze\right\} ,\,\ze>0.\phantom{\mJ_{\ze}\assign\left\{ x\,:\,\min_{y\in\mJ}\left|x-y\right|\leq\ze\right\} ,\,\ze>0.}\]
The limits of the rescaled Lebesgue measure and Lipschitz-Killing
curvature-direction measures of $\mJ_{\ze}$ do describe $\mJ$ geometrically.
The purpose of this work is to make that statement precise under minimal
assumptions.

The type of limit is most easily explained using the example of the
well-known Min\-kow\-ski content. We localize it as a measure version.
Denote by $C_{d}\left(\mJ_{\ze},A\right)$ the Lebesgue measure of
$\mJ_{\ze}\cap A$ (if $A\subseteq\Rd$). The rescaling factor $\ze^{\zD-d}$
keeps $\ze^{\zD-d}C_{d}\left(\mJ_{\ze},\cdot\right)$ from vanishing
trivially as $\ze\ra0$. (Thermodynamic formalism determines for the
correct dimension $\zD>0$, see below.) But the rescaled total volume
can oscillate. Cesaro averaging on a logarithmic scale suppresses
this. Without any further assumptions, we prove the localized average
Minkowski content converges (as a weak limit of measures):\begin{eqnarray*}
C_{d}^{\text{frac}}\left(\mJ,\cdot\right) & \assign & \lim_{\Rvar\ra0}\frac{1}{\left|\ln\Rvar\right|}\int_{\Rvar}^{1}\ze^{\zD-d}C_{d}\left(\mJ_{\ze},\cdot\right)\frac{d\ze}{\ze}\\
 & = & \lim_{T\ra\infty}\frac{1}{T}\int_{0}^{T}e^{-t\left(\zD-d\right)}C_{d}\left(\mJ_{e^{-t}},\cdot\right)dt.\end{eqnarray*}

Curvature and area fit into this approach as well as volume. Classically,
they are intrinsically determined and form a complete system of certain
Euclidean invariants. To illustrate classical curvature, let $\myset\subseteq\Rd$
be a compact body whose boundary $\partial\myset$ is a smooth submanifold.
Write $n\left(x\right)$ for the unit outer normal to $\myset$ at
$x\in\partial\myset$. The differential $n^{\prime}\left(x\right)$
of the Gauss map $n$ has real eigenvalues $\kappa_{1}\left(x\right)$,
$\dots$, $\kappa_{d-1}\left(x\right)$ (geometrically, inverse radii
of osculating spheres).%
\footnote{Curvature is second-order geometry because $n^{\prime}\left(x\right)$
is a second-order coordinate derivative, as opposed to Minkowski content
and various Hausdorff and packing measures.%
} They are called principal curvatures of $\myset$ at $x$. The full-dimensional
Hausdorff measure $\H^{d-1}$ coincides with the Riemannian volume
form on $\partial\myset$. Lipschitz-Killing curvature-direction measures
are integrals of symmetric polynomials of principal curvatures: For
integers $0\leq k<d$ and any Borel set $A\subseteq\Rd\times S^{d-1}$,
define \[
C_{k}\left(\myset,A\right)\assign\text{const}\int_{\partial\myset}1\left\{ \left(x,n\left(x\right)\right)\in A\right\} \,\sum_{j_{1}<\dots<j_{d-k-1}}\kappa_{j_{1}}\left(x\right)\cdots\kappa_{j_{d-k-1}}\left(x\right)\, d\H^{d-1}\left(x\right).\]
Our parallel sets $\mJ_{\ze}$ will only satisfy a positive reach
condition rather than smoothness. (Parallel sets do not depend on
the choice of $\mf i$ and enjoy widespread use in singular curvature
theory.) Their curvature-direction measures are defined via geometric
measure theory. Each point $x$ can have a different cone of normals.
Anisotropic curvature quantities also prove useful to describe heterogeneous
materials, see \cite{SchroederturkMecke11MinkowskiShapeAnalysis}
and the references therein. Therefore, we prefer curvature-direction
measures rather than non-directional Federer curvature, i.e., than
their $\Rd$ marginal measures. We prove the average fractal curvature
measures \begin{eqnarray*}
C_{k}^{\text{frac}}\left(\mJ,\cdot\right) & \assign & \lim_{\Rvar\ra0}\frac{1}{\left|\ln\Rvar\right|}\int_{\Rvar}^{1}\ze^{\zD-k}C_{k}\left(\mJ_{\ze},\cdot\right)\frac{d\ze}{\ze}\end{eqnarray*}
 converge under mild assumptions.

While most literature uses this normalization, a different one displays
more clearly $C_{k}^{\text{frac}}\left(\mJ,\cdot\right)$ does not
depend on the ambient dimension (Remark \ref{rem:Arguably renormalize Ck}
below).

We fix some notation for the assumptions. This dynamical system powers
all convergence results: The left shift on $\mJ$ inverts the generating
contractions, i.e., $\ms\left(\mf ix\right)\assign x$. Thermodynamic
formalism defines both a unique dimension $\zD>0$ and conformal probability
measure $\mmc$ on $\mJ$. The contraction ratio is its potential,
$\mmc\left(\mf{\omega_{m}}\circ\dots\circ\mf{\omega_{1}}\mJ\right)=\int_{\mJ}\zrall{\left(\mf{\omega_{m}}\circ\dots\circ\mf{\omega_{1}}\right)}{}^{\zD}d\mmc$,
$\omega_{k}\in\mI$. But the curvature is covariant under the similarity
$\mf i^{\prime}x$ rather than $\ms$. Also, only a similarity lets
us compare its image of $\mJ_{\ze}$ locally with any thinner parallel
set. (This restricts us to conformal $\mf i$.) The distortion $\bpsym$
captures the mismatch between iterations of the dynamical system and
their differentials (Figure \ref{fig:distortion and renewal radii}).
For an $\mI$-valued sequence $\omega$ and points $x,y\in\mJ$, we
prove convergence\begin{eqnarray*}
\bp{\widetilde{\omega|m}}xy & \assign & \phantom{{\lim_{m\ra\infty}}}\left(\left(\mf{\omega_{m}}\circ\dots\circ\mf{\omega_{1}}\right)^{\prime}x\right)^{-1}\left(\mf{\omega_{m}}\circ\dots\circ\mf{\omega_{1}}y-\mf{\omega_{m}}\circ\dots\circ\mf{\omega_{1}}x\right),\\
\bplim{\omega}xy & \assign & \lim_{m\ra\infty}\bp{\widetilde{\omega|m}}xy.\end{eqnarray*}

Two mild conditions guarantee $C_{k}^{\text{frac}}\left(\mJ,\cdot\right)$
converges (see Theorem \ref{thm:Main result}). Neither condition
applies to the volume ($k=d$), or to the surface area measure ($k=d-1$)
if restricted to $\Rd$ (instead of $\Rd\times S^{d-1}$).
\begin{enumerate}
\item Positive reach of the closed complement of almost every parallel set
both of $\mJ$ and of almost every distorted fractal $\bp{\omega}x\mJ$
(Assumption \ref{ass:Regularity-of-parallel-sets}). This guarantees
existence and continuity of curvature measures. It is automatically
satisfied in ambient dimensions $d\leq3$. In higher dimensions, it
is more convenient to check that almost all values of the Euclidean
distance function are regular.
\item A uniform integrability condition limits the $C_{k}^{\var}$ mass
on overlaps of images under different $\mf i$ (Assumption \ref{ass:(Uniform-integrability)}
below). This is sharp due to a limited converse (Remark \ref{rem:sharp}).
Let $B\left(x,\ze\right)$ be an $\ze$-ball around $x$. A sufficient,
integrability condition is that \[
\left(\ze,\omega,x\right)\mapsto\ze^{-k}\sup_{M>0}\frac{1}{M}\sum_{m=0}^{M-1}C_{k}^{\var}\left(\left(\bp{\widetilde{\omega\vert m}}x\mJ\right)_{\ze},\bp{\widetilde{\omega\vert m}}xB\left(x,a\ze\right)\right),\, a>1\]
 belongs to a Zygmund space related to $\mmc$. The much stronger,
sufficient condition \[
\op{ess\, sup}\limits _{x\in\mJ,\ze>0}\ze^{-k}C_{k}^{\var}\left(\mJ_{\ze},B\left(x,a\ze\right)\right)<\infty\]
intuitively means the rescaled principal curvatures should be uniformly
bounded.
\end{enumerate}
We are the first to prove average fractal curvature exists for general
self-conformal sets because our $C_{k}$ contains principal curvatures
when $k<d-1$. Our theorem encompasses all previous convergence results
about ($\zD$-dimensional) average Minkowski or surface content of
self-conformal sets \cite{KombrinkKessebohmer10OneDimSelfconformal,FreibergKombrink11MinkowskiConformalImages1109.3896,Kombrink11DissConformalMinkowski}.
We remove all previously needed geometrical conditions on the Open-Set-Condition
set or on the hole structure. Fractal curvatures or their total mass
were first investigated for self-similar sets \cite{Win08MR2423952,Zaehle11SelfsimRandomFractalsMR2763731,RatajZaehle10CurvatureDensitiesSelfSimilarSets,WinterZaehle10cmofsss,BohlZaehle12CurvatureDirectionMeasures1111.4457}.
The Minkowski content was treated earlier \cite{LapidusPomerance93MR1189091,Falconer95MinkowskiMeasurabilityMR1224615,Gatzouras00LacunarityMR1694290},
and in a very general context recently in \cite{RatajWinter09MeasuresOfParallelSetsArxiv09053279}
and \cite{RatajWinter11MinkowskiMeasurabilitySurface1111.1825}.

There is an integral formula for the limit $C_{k}^{\text{frac}}\left(\mJ,\cdot\right)$
(Theorem \ref{thm:Main result}). It is always finite. The formula
shows the Minkowski content is strictly positive. But for $k<d$,
the interpretation as fractal curvature should be checked if $C_{k}^{\text{frac}}\left(\mJ,\cdot\right)$
vanishes trivially \cite{Win08MR2423952}\comment{, \cite{PokornyWinter13Scaling}}.

Unsurprisingly, the fractal curvature is just as self-conformal as
$\mJ$. Its $\Rd$ marginal measure must therefore be a multiple of
the conformal probability $\mmc$. That means the fractal locally
looks the same almost everywhere. Its $S^{d-1}$ sphere marginal measure
inherits a group action invariance from the self-conformality ($\fibreavg f{\omega}$
\eqref{eq:DEF f limit} is invariant under $\G_{\eta u}$ \eqref{eq: DEF Brin group}
below). Otherwise, it is free to provide geometric information beyond
its total mass. For example, the Sierpinski gasket resembles an equilateral
triangle, and both have the same sphere marginals up to a constant
\cite{BohlZaehle12CurvatureDirectionMeasures1111.4457}. (But the
group spreads it uniformly over $S^{d-1}$ for generic $\mf i$, $\mI$.
Future research could ask how curvature approaches its limit \cite{LapidusPearseWinter11TubeFormulasArbitraryGeneratorsMR2799798}.)
In the self-similar case, $C_{k}^{\text{frac}}\left(\mJ,\cdot\right)$
is an (independent) product of both its marginal measures. In the
self-conformal case, it is a skew product. But the skewing factor
is easy to compute and only takes values in the orthogonal group of
$\Rd$ (the factor $\left(\bplim{\eta}u^{\prime}\hat{y}\right)^{\orth}$
in \eqref{eq:DEF f limit} below).

The motivation for this work is threefold. Fractal curvature is a
geometric parameter to distinguish between sets of identical dimensions.
Statistical estimates of various fractal dimensions already play a
role in the applications. It is natural to expect further geometric
parameters to be useful, both in their own right and to improve estimators
for dimensions. 

Secondly, curvature (geometry) determines analytic properties. In
the classical, smooth Riemannian case, the Dirichlet Laplacian eigenvalue
counting function has an expansion in terms of the boundary's curvatures.
Heat kernel estimates also involve curvature. In one dimension, the
next-to-leading order spectral asymptotics are proven: Let $A$ be
an open set whose boundary $\partial A$ has a nontrivial, $\zD$-dimensional
Minkowski content without Cesaro averaging (modified Weyl-Berry conjecture),
as $\lambda\ra\infty$: \[
\#\left\{ \text{eigenvalues }\leq\lambda\right\} =\text{const}C_{1}\left(A\right)\lambda^{1/2}+\text{const}_{\zD}C_{1}^{\op{frac}}\left(\partial A,A\times S^{1}\right)\lambda^{\zD/2}+o\left(\lambda^{\zD/2}\right).\]
 Upper and lower bounds hold at the $\lambda^{\zD/2}$ order if the
upper and lower Minkowski content are nontrivial \cite{LapidusPomerance93MR1189091}.
In several dimensions, true curvature exists, and the conjecture fails.
We hope to provide some missing geometric input. \comment{(The Laplacian
on the fractal itself is approximated in \cite{MoscoVivaldi07FractalSingularHomogenizationMR2323380,VivaldiCapitanelli11RandomKochLaplacianMR2812592,MoscoVivaldi09SierpinskiEnergyHomogenizationMR2533923,Vivaldi12HomogenizationSurvey,MoscoVivaldi12ThinFractalFibresGeneralCase}.)}

Thirdly, there is the long-standing quest in geometric measure theory
to extend the notion of curvature as far as possible. Classes of such
{}``classical'' sets include $C^{2}$-smooth manifolds, convex sets,
sets of positive reach, and their locally finite unions \cite{Federer59MR0110078,Zaehle86IntRepMR849863,RatajZaehle01MR1846894}.
See \cite{Bernig03AspectsCurvature} and the references therein for
an overview. Support measures generalize to all closed sets but loose
additivity and other characteristic features of curvature \cite{HugLastWeil04SupportMeasuresMR2031455}.
They and our fractal curvature usually live on mutually disjoint subsets
of $\mJ$, e.g. for the Sierpinski gasket.

The central ideas of the proof are explained in the respective section
headings. The methods are dynamical although the results are geometric.
We reduce the conformal case to similarities by separating $\mf{\,}$
into the distortion $\bpsym$ and its derivative $\mf{\,}^{\prime}$,
which matches Euclidean covariance. The normal directions necessitate
the two-sided shift dynamical system extended by the orthogonal group
co-cycle $\left(\mf{\,}^{\prime}\right)^{\orth}$.

\section{Preliminaries}

\subsection{Self-conformal sets and dynamics}

Conformal maps are automatically\textit{ (conjugate) holomorphic}
in $\mathbb{R}^{2}$ or even \textit{Möbius} in $\Rd$, $d>2$ (Liouville
theorem, e.g. \cite[Theorem 1.5]{Gehring92TopicsQuasiconformalMappingsMR1187087}).
\begin{assumption}
\label{ass:model}Throughout this entire paper, $\left\{ \mf i\,:\, i\in\mI\right\} $,
$2\leq\left|\mI\right|<\infty$ will be a conformal iterated function
system (IFS), i.e., each function \[
\mf i:\bV\ra\bV\]
 is an injective $C^{1+\holder}$-diffeomorphism defined on an open,
connected set $\bV\subseteq\Rd$ such that the differentials $\mfsym^{\prime}\left(x\right)$
are linear similarities of $\Rd$, and \[
\left|\mf ix-\mf iy\right|\leq\rmax\left|x-y\right|,\, x,y\in\bV\]
 for some global $\rmax<1$.  We assume the Open Set Condition (OSC):
There is an open, connected, bounded, nonempty set $\Int X$ such
that $\mX\subseteq\bV$, \[
\mf i\Int X\subseteq\Int X,\, i\in\mI\]
 and \[
\left(\mf i\Int X\right)\cap\left(\mf j\Int X\right)=\varnothing,\, i\neq j,\, i,j\in\mI.\]

\end{assumption}
Without loss of generality (\cite{PeresEA01ConformalStrongOpenSetConditionMR1838793}),
the set $\mX$ shall satisfy the \textit{Strong Open Set Condition}
\[
\Int X\cap\mJ\neq\varnothing.\]
The IFS shall extend conformaly in $C^{1+\holder}$ to the closure
of an open set $\mV\supseteq\overline{\bV}$: $\mf i:\overline{\mV}\ra\overline{\mV}$.

This is the situation studied in \cite{MauldinUrbanski96DimensionsMeasuresMR1387085},
except we allow only finitely many maps $\mf i$. Their cone condition
is not needed for finite $\mI$, and their bounded distortion condition
can be proved \cite{MauldinUrbanski03GraphDirectedMR2003772}. See
\cite{MauldinUrbanski96DimensionsMeasuresMR1387085} for further discussion
and facts given without a reference.

The\textit{ self-conformal fractal} is the unique, invariant, compact
set $\varnothing\neq\mJ=\bigcup_{i\in\mI}\mf i\mJ\subseteq\mX$. The
(Borel probability) \textit{conformal measure} $\mmc$ on $\mJ$ and
\textit{dimension} $\zD>0$ are uniquely characterized by \begin{equation}
\int_{\mJ}f\left(x\right)\, d\mmc\left(x\right)=\int_{\mJ}\sum_{i\in\mI}\zr ix^{\zD}f\left(\mf ix\right)\, d\mmc\left(x\right).\label{eq:perron frobenius, dimension, measure}\end{equation}
Abbreviate $x|n\assign x_{1}\dots x_{n}\in\mI^{n}$, the \textit{reversed
word} $\widetilde{x|n}\assign x_{n}\dots x_{1}$, $\mf{x|n}\assign\mf{x_{1}}\circ\dots\circ\mf{x_{n}}$,
$\mI^{*}\assign\bigcup_{n\in\mathbb{N}_{0}}\mI^{n}$. We will identify
$\mmc$-almost all points $x\in\mJ$ with their (unique) coding sequence
$x_{1}x_{2}\dots\in\mI^{-\mathbb{N}}$, i.e., $x\equiv\lim_{n\ra\infty}\mf{x|n}\left(v\right)$
for any starting $v\in\bV$. The \textit{(left) shift} $\ms:\mJ\rightarrow\mJ$
or $\ms:\mI_{\mJ}^{\mathbb{N}}\ra\mI_{\mJ}^{\mathbb{N}}$ maps $x$
to $\mf{x_{1}}^{-1}x$. The unique, equivalent, $\ms$-\textit{invariant
probability} $\mmi$ is ergodic. Denote its Hölder continuous density
$\icdens\assign d\mmi/d\mmc$. The proofs need the \textit{two-sided
shift space} $\mIN\times\mJ$. Let $\msb\left(\omega_{1}\omega_{2}\omega_{3}\dots,\, x_{1}x_{2}x_{3}\dots\right)\assign\left(x_{1}\omega_{1}\omega_{2}\dots,\, x_{2}x_{3}x_{4}\dots\right)$
for $\left(\omega,x\right)\in\mIN\times\mJ$; perhaps it is most natural
to think of $\omega$ in reverse order: $\msb\left(\left(\dots\omega_{2}\omega_{1}\right)\left(x_{1}x_{2}\dots\right)\right)=\left(\left(\dots\omega_{1}x_{1}\right)\left(x_{2}x_{3}\dots\right)\right)$.
Denote $\mmbc$, $\mmbi$, $\icbdens=d\mmbi/d\mmbc$ the Rokhlin extensions.

We extend it again, by the orthogonal group, to $\mIN\times\mJ\times O\left(d\right)$.
The new measure $\ex{\mmbi}\assign\mmbi\otimes\mathcal{H}_{O\left(d\right)}$
is the direct product with the Haar probability on $O\left(d\right)$.
The new left shift is \begin{equation}
\ex{\msb}^{m}\left(\omega,x,g\right)\assign\left(\widetilde{x|m}.\omega,\,\ms^{m}x,\,\left(\left(\mf{x|m}^{\prime}\ms^{m}x\right)^{\orth}\right)^{-1}g\right),\label{eq:DEF group extended shift}\end{equation}
 i.e., the \textit{skew product} with the \textit{Rokhlin co-cycle}
$\left(\omega,x\right)\mapsto\left(\mf{x|1}^{\prime}\ms x\right)^{\orth-1}$.

Recall some basic properties. The Perron-Frobenius operator gives
(\cite[Lemma 4.2.5]{PrzytyckiUrbanski10ConformalMR2656475}): \begin{equation}
\int_{\mJ}\frac{g}{\icdens}\left(x\right)d\mmi\left(x\right)=\int_{\mIN\times\mJ}\frac{g}{\icdens}\left(\mf{\widetilde{\omega\vert n}}x\right)d\mmbi\left(\omega,x\right),\, g\in L_{1}\left(\mmc\right).\label{eq:lem:(Perron-Frobenius-operator)}\end{equation}
Denote $B\left(x,\ze\right)\assign\left\{ y\,:\,\left\vert y-x\right\vert \leq\ze\right\} $.
$\mJ$ is a $\zD$-set for $\mmc$ (\textit{Ahlfors regular}): there
is a $0<C_{\mJ}<\infty$,

\begin{equation}
C_{\mJ}^{-1}\leq\frac{\mmc B\left(x,\ze\right)}{\ze^{\zD}}\leq C_{\mJ}.\label{eq:D-set-property Ahlfors}\end{equation}
\textit{Bounded distortion} is fundamental. There is a global constant
$0<\mK<\infty$ such that for all $x,y\in\overline{\mV}$, $\tau\in\mI^{*}$,
\begin{equation}
\mK^{-1}\leq\frac{\zr{\tau}x}{\zr{\tau}y}\leq\mK.\label{eq:K bounded distortion}\end{equation}
There is a constant $0<\Kf<\infty$ such that for all $x\in\mV$,
$\ze/\zr{\tau}x\leq\dist\left(x,\mV^{c}\right)$, $\tau\in\mI^{*}$,
\cite[(BDP.3)]{MauldinUrbanski96DimensionsMeasuresMR1387085}:\begin{equation}
B\left(\mf{\tau}x,\ze\Kf^{-1}\right)\subseteq\mf{\tau}B\left(x,\frac{\ze}{\zr{\tau}x}\right)\subseteq B\left(\mf{\tau}x,\ze\Kf\right).\label{eq:BDP.3}\end{equation}

The \textit{distortion}, how much the non-linear map $\mf{\omega|n}$
deviates from its differential (up to isometries), converges exponentially
(Proposition \ref{pro:(Distortion-converges)} below): \begin{eqnarray}
\bp{\widetilde{\omega|n}}xy & \assign & \phantom{\lim_{n\ra\infty}}\left(\mf{\widetilde{\omega|n}}^{\prime}x\right)^{-1}\left(\mf{\widetilde{\omega|n}}y-\mf{\widetilde{\omega|n}}x\right),\label{eq:def psi}\\
\bplim{\omega}xy & \assign & \lim_{n\ra\infty}\bp{\widetilde{\omega|n}}xy.\label{eq: def psi limit}\end{eqnarray}
 Note its derivative is point-wise a similarity. The \textit{Brin
ergodicity group} is the compact subgroup of $O\left(d\right)$ generated
by \begin{equation}
\G_{\eta u}\assign\overline{\left<\left(\left(\bplim{\eta}u^{\prime}x\right)^{\orth}\right)^{-1}\left(\mf i^{\prime}x\right)^{\orth}\left(\bplim{\eta}u^{\prime}\mf ix\right)^{\orth}\,:\, x\in\mJ,i\in\mI\right>_{O\left(d\right)}}.\label{eq: DEF Brin group}\end{equation}
Let $\HG$ be its Haar probability. For $f:\bV\times S^{d-1}\ra\mathbb{R}$,
the average of $f/\icdens$ over the \textit{ergodic fibre} through
$\left(\omega,x\right)$ is \begin{equation}
\fibreavg f{\omega}\left(z,n\right)\assign\int\limits _{\mJ}\int\limits _{\G_{\eta u}}f\left(\hat{y},\left(\bplim{\eta}u^{\prime}\hat{y}\right)^{\orth-1}\hat{g}\left(\bplim{\omega}u^{\prime}z\right)^{\orth}n\right)\, d\HG\left(\hat{g}\right)d\mmc\left(\hat{y}\right).\label{eq:DEF f limit}\end{equation}
Different choices of $\left(\eta,u\right)\in\mIN\times\mJ$ do not
alter $\fibreavg f{\omega}$ (but do conjugate $\G_{\eta u}$). So
$\G_{\eta u}$ and $\fibreavg f{\omega}$ can be skipped when replacing
$u$ with $x$. Readers not interested in directional $C_{k}$ should
ignore all underbars and set $\G_{\eta u}\assign O\left(d\right)$,
$\fibreavg f{\omega}\left(z,n\right)\assign\int_{\mJ}fd\mmc$.

\subsection{Curvature}

We will approximate $\mJ$ with its $\ze$-\textit{parallel set} (Minkowski
sausage, dilation, offset, homogenization) $\mJ_{\ze}$ or the closure
of the complement: \begin{eqnarray}
\mJ_{\ze}\assign\left\{ x\in\Rd\,:\,\left|x-y\right|\leq\ze\text{ for a }y\in\mJ\right\} , &  & \widetilde{\mJ_{\ze}}\assign\overline{\left(\mJ_{\ze}\right)^{c}}.\label{eq:DEF parallel set}\end{eqnarray}
The \textit{reach} of a closed set $K\subseteq\Rd$ is the supremum
of all $s>0$ such that every point $x\in K_{s}$ has a unique nearest
neighbor $y\in K$. If $\op{reach}K>0$, the set of pairs $\left(y,\frac{x-y}{\left|x-y\right|}\right)$
forms the Federer \textit{normal bundle}, $\op{nor}K$ \cite{Federer59MR0110078}.
Positive reach replaces the $C^{\infty}$ smoothness often assumed
in differential geometry.

We will implicitly extend any conformal or similarity map $\xi$ of
$\Rd$ to one of $\Rd\times S^{d-1}$, $\xi\left(z,n\right)\assign\left(\xi\left(z\right),\xi^{\prime}\left(z\right)^{\op{orth}}n\right)$
(i.e., to the cotangent bundle equipped with the Sasaki metric). Here,
$\xi^{\prime\orth}$ is the \textit{orthogonal group component} of
its derivative $\xi^{\prime}$.\comment{ Write $\left(\xi^{\prime}\right)^{\orth-1}\assign\left(\left(\xi^{\prime}\right)^{\orth}\right)^{-1}$.}

We will need only these properties of \textit{Lipschitz-Killing curvature-direction
measures} $C_{k}$: Let $K\subseteq\Rd$ be a set of positive reach.
For $k\in\left\{ 0,\dots,d-1\right\} $, $C_{k}\left(K,\cdot\right)$
is a signed Borel measure on $\op{nor}K\subseteq\Rd\times S^{d-1}$
with (locally) finite variation. It is Euclidean motion \textit{covariant}
and \textit{homogeneous} of degree $k$, i.e., if $g$ is a similarity
with ratio $s>0$ and orthogonal group part $g^{\orth}$, then for
Borel $A\subseteq\Rd$, $N\subseteq S^{d-1}$, \begin{equation}
C_{k}\left(gK,gA\times g^{\orth}N\right)=s^{k}C_{k}\left(K,A\times N\right).\label{eq:Ck scaling}\end{equation}
 It is \textit{locally determined}, i.e., if two sets of positive
reach $K$, $\hat{K}$ agree $K\cap G=\hat{K}\cap G$ on an open set
$G\subseteq\Rd$, then for $A\subseteq\Rd$, $N\subseteq S^{d-1}$, 

\begin{equation}
C_{k}\left(K,\left(A\cap G\right)\times N\right)=C_{k}\left(\hat{K},\left(A\cap G\right)\times N\right).\label{eq:Ck locality}\end{equation}
A special case of \textit{continuity} is stated in Fact \ref{fac:continuity of curvature}
below. The $\Rd$ projection of $C_{d-1}\left(K,\cdot\right)$ agrees
with half the $\left(d-1\right)$-dimensional Hausdorff measure $\mathcal{H}^{d-1}$
\textit{(surface area) }on the boundary $\partial K$. Because the
\textit{Lebesgue} measure $\mathcal{L}^{d}$ and the rotation invariant
probability $\mathcal{H}_{S^{d-1}}$ on the sphere share these properties,
we define for Borel $A\subseteq\Rd$, $N\subseteq S^{d-1}$, \[
C_{d}\left(K,A\times N\right)\assign\mathcal{L}^{d}\left(K\cap A\right)\mathcal{H}_{S^{d-1}}\left(N\right).\]
(The trivial product with $\mathcal{H}_{S^{d-1}}$ makes every $C_{k}$
live on $\Rd\times S^{d-1}$.) We define the curvature measure of
$\mJ_{\ze}$ via the normal reflection since we assume $0<\op{reach}\widetilde{\mJ_{\ze}}$
(Assumption \ref{ass:Regularity-of-parallel-sets} below): for Borel
$A\subseteq\Rd$, $N\subseteq S^{d-1}$, $k<d$, \begin{equation}
C_{k}\left(\mJ_{\ze},A\times N\right)\assign\left(-1\right)^{d-1-k}C_{k}\left(\widetilde{\mJ_{\ze}},A\times\left\{ -n:n\in N\right\} \right).\label{eq:Ck mirrored}\end{equation}
This is consistent in case both $\mJ_{\ze}$ and $\widetilde{\mJ_{\ze}}$
are (unions of) sets of positive reach (\cite[Theorems 3.3, 3.2]{RatajZaehle01MR1846894}).
Of course, $C_{k}\left(\mJ_{\ze},\cdot\right)$ enjoys the above geometrical
properties. All assertions in this paper also hold for (non-directional)
\textit{Federer curvature measures}, i.e., their projection $C_{k}\left(\mJ_{\ze},\cdot\times S^{d-1}\right)$
onto $\Rd$. (We conjecture projecting to $\Rd$ does not reduce the
variation's mass \cite[Rem. 3.15]{BohlZaehle12CurvatureDirectionMeasures1111.4457}.)

For a gentle introduction to curvature, combine the brief summary
in \cite{Zaehle11SelfsimRandomFractalsMR2763731} with \cite{SchneiderWeil92IntegralgeometrieMR1203777}
(polyconvex case), \cite{KrantzParks08GeometricIntegrationTheoryMR2427002}
(currents and Hausdorff measure), \cite{Federer59MR0110078} (positive
reach), \cite{Zaehle86IntRepMR849863} (curvature-direction measure),
\cite{Bernig03AspectsCurvature} (survey of notions).

To put curvature in context: The total masses $K\mapsto C_{k}\left(K,\Rd\times S^{d-1}\right)$
form a complete system of Euclidean invariants in the following sense.
Every set-additive, continuous, motion invariant functional on the
space of convex bodies is a linear combination of total curvatures
(Hadwiger's theorem). By means of an approximation argument, this
holds for large classes of singular sets, including sets of positive
reach \cite{Zaehle90CharacterizationCurvatureMR1089237}. Furthermore,
$C_{k}$ is the integral of symmetric functions of generalized principal
curvatures $\kappa_{i}$ over the $d-1$-dimensional Hausdorff measure
$\mathcal{H}^{d-1}$ on the normal bundle: \[
C_{k}\left(K,A\times N\right)=c_{k,d}\int\limits _{\left(\op{nor}K\right)\cap A\times N}\frac{\sum_{i_{1}<\dots<i_{d-1-k}}\kappa_{i_{1}}\left(x,n\right)\dots\kappa_{i_{d-1-k}}\left(x,n\right)}{\prod_{j=1}^{d-1}\sqrt{1+\kappa_{j}^{2}\left(x,n\right)}}d\mathcal{H}^{d-1}\left(x,n\right).\]

\subsection{\label{sub:Formulas-(dumping-ground)}Remaining notation}

\begin{figure}
\printornot{\MpOrRaster{\includegraphics[scale=3]{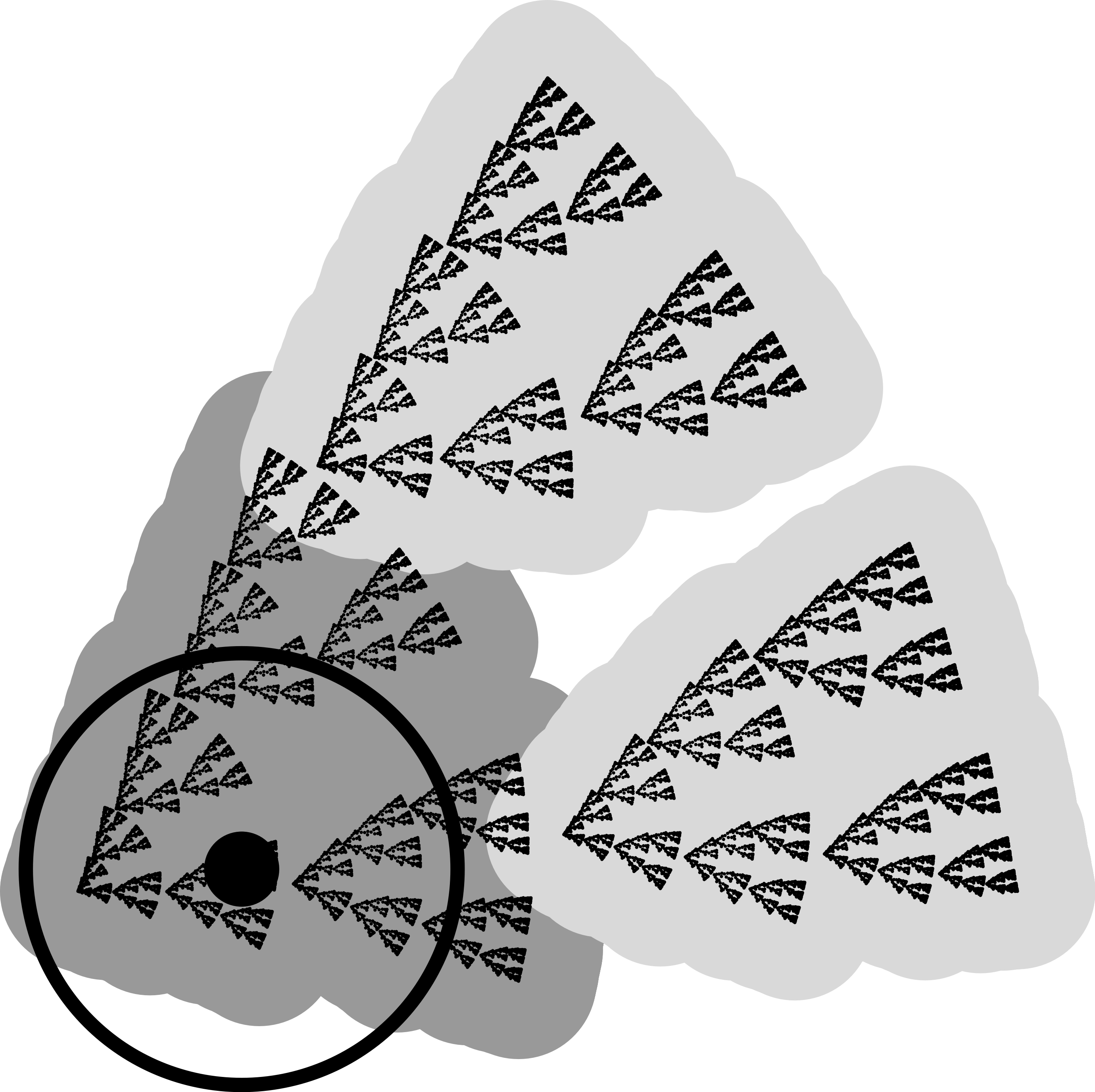}}{\includegraphics[scale=3]{psi_orig}}\ \MpOrRaster{\includegraphics[scale=3]{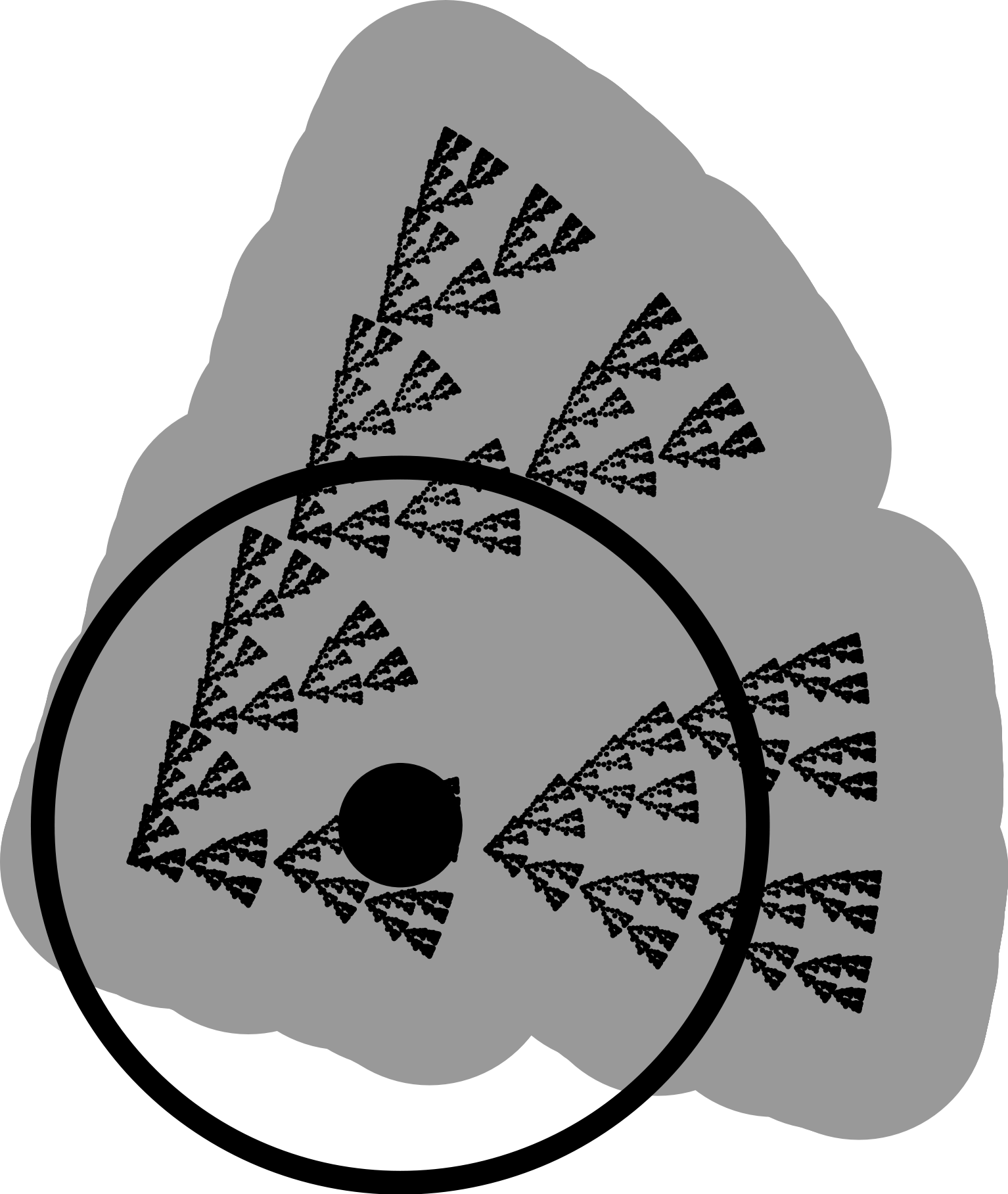}}{\includegraphics[scale=3]{psi_levelset}}\hspace*{1.3cm}\MpOrRaster{\includegraphics[scale=2.7]{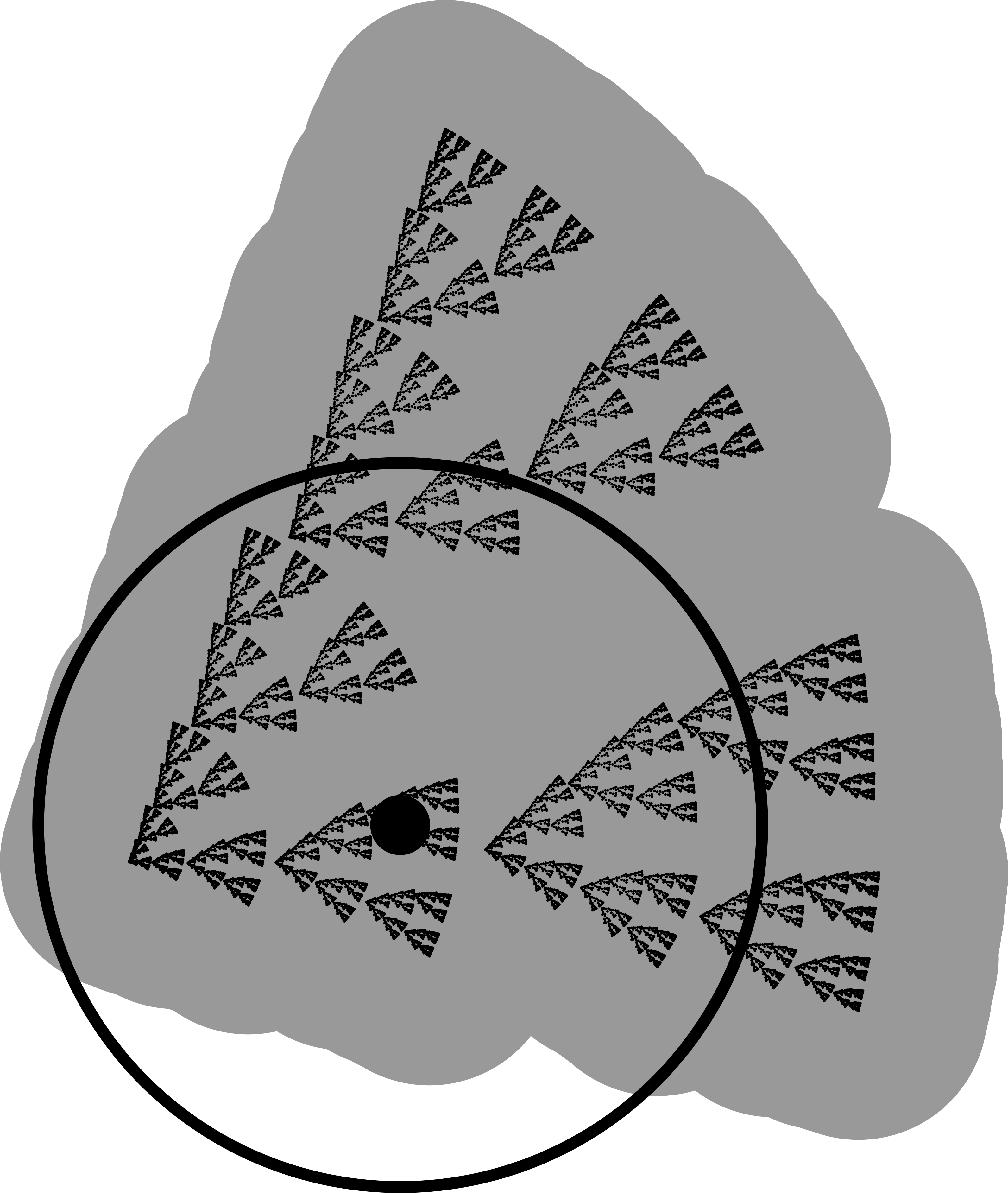}}{\includegraphics[scale=2.7]{psi_magnified}}}\caption{\label{fig:distortion and renewal radii}Left: fractal $\mJ$ (black),
parallel set $\mJ_{\ze}$ (both shades), point $x=\left(1,1,2,3,1,\dots\right)$
(marked) surrounded by a ball of $2a$ times its renewal radius $\R\left(x,1\right)$.
The ball supports $\Along$ and localizes curvature, so it must avoid
$\left(\mf 2F\right)_{\ze}\cup\left(\mf 2F\right)_{\ze}$ (light gray).\protect \\
Middle: level set $\left(\mf 1F\right)_{\ze}$ containing $x$.\protect \\
Right: Magnification is the composition of the dynamical system
map $\ms$ and distortion by $\bpsym$. Black/shaded: magnified $\left(\mf 1F\right)_{\ze}$
or $\left(\bp 1x\mJ\right)_{\ze/\zr 1x}$. The marked point $\bp 1x\ms x$
is surrounded with $2a\R\left(\ms x,0\right)$.}

\end{figure}
Fix arbitrary constants $a>1$ and $0<\Rstart\leq a^{-1}\dist\left(\mJ,\bV^{c}\right)$.
Let $\left(\eta,u\right)\in\mIN\times\mJ$ be any fixed reference
point. Define for $x,z\in\bV$, $\ze>0$,

\begin{equation}
\A xz{\ze}\assign\max\left(1-\frac{\left|x-z\right|}{\ze a},\,0\right)\label{eq:DEF A balls}\end{equation}
 a {}``smoothed-out'' indicator function of the $a\ze$-ball around
$x$. Inspired by $\R\left(x,m\right)\sim\dist\left(x,\partial\bigcup_{\tau\in\mI^{m}}\mf{\tau}\mX\right)$,
the \textit{renewal epochs} are defined (Definition \ref{def:R, N})
for $\mmc$-almost all $x\in\mJ$ and $m\in\mathbb{N}$ as:\begin{eqnarray}
\widetilde{\R}\left(x,0\right) & \assign & \frac{\dist\left(x,X^{c}\right)}{2\Kf a}\min\left\{ 1,\frac{2\Kf^{-1}a\Rstart}{\max_{x\in\mJ}\dist\left(x,\mX^{c}\right)}\right\} ,\nonumber \\
\R\left(x,m\right) & \assign & \sup_{n\geq m}\zr{x|n}{\ms^{n}x}\widetilde{\R}\left(\ms^{n}x,0\right).\label{eq:predef renewal radius R}\end{eqnarray}
 Denote the measure theoretic \textit{entropy} of $\mmi$ and $\ms$,
\begin{equation}
\entropy\assign-\zD\int_{\mJ}\ln\zr{x_{1}}{\ms x}\, d\mmi\left(x\right).\label{eq:DEF H is entropy}\end{equation}
Define for $x\in\mJ$, $\ze,s,t\in\left(0,\Rstart\right]$, $f:\bV\times S^{d-1}\ra\mathbb{R}$
Borel measurable, with the shorthand $\ex z=\left(z,n\right)\in\Rd\times S^{d-1}$,
the measure \begin{equation}
\hr_{\chunkh st}^{k,\pm}\left(x,\ze,f\right)\assign\int_{\op{nor}\widetilde{\mJ_{\ze}}}\frac{\chunk st\left(\ze\right)\,\ze^{\zD-k}\,\A xz{\ze}}{\int\A yz{\ze}d\mmc\left(y\right)}f\left(\ex z\right)\, dC_{k}^{\pm}\left(\mJ_{\ze},\ex z\right),\label{eq:Def hr nontransformed}\end{equation}
and for a finite word $\tau\in\mI^{*}$ in \eqref{eq:def psi} and
an infinite word $\omega\in\mIN$ in \eqref{eq: def psi limit}:\begin{multline}
\hw_{\tau}^{k,\pm}\left(x,\ze,f\right)\assign\\
\int\frac{\chunk{\R\left(x,1\right)}{\R\left(x,0\right)}\left(\frac{\ze}{\left|\bp{\tau}u^{\prime}x\right|}\right)\,\ze^{\zD-k}\:\A{\bp{\tau}ux}{\bp{\tau}uz}{\ze}}{\int\left|\bp{\tau}u^{\prime}y\right|^{\zD}\A{\bp{\tau}uy}{\bp{\tau}uz}{\ze}\, d\mmc\left(y\right)}f\left(\ex z\right)\, dC_{k}^{\pm}\left(\left(\bp{\tau}u\mJ\right)_{\ze},\bp{\tau}u\ex z\right),\label{eq:DEF hw finite-word}\end{multline}
\begin{multline}
\hw_{\widetilde{\omega}}^{k}\left(x,\ze,f\right)\assign\\
\int\frac{\chunk{\R\left(x,1\right)}{\R\left(x,0\right)}\left(\frac{\ze}{\left|\bplim{\omega}u^{\prime}x\right|}\right)\,\ze^{\zD-k}\:\A{\bplim{\omega}ux}{\bplim{\omega}uz}{\ze}}{\int\left|\bplim{\omega}u^{\prime}y\right|^{\zD}\A{\bplim{\omega}uy}{\bplim{\omega}uz}{\ze}\, d\mmc\left(y\right)}f\left(\ex z\right)\, dC_{k}\left(\left(\bplim{\omega}u\mJ\right)_{\ze},\bplim{\omega}u\ex z\right)\label{eq:DEF hw limiting word}\end{multline}
 if these integrals exist, and\begin{eqnarray*}
\hr_{\chunkh st}^{k}\assign\hr_{\chunkh st}^{k,+}-\hr_{\chunkh st}^{k,-}, &  & \hw_{\tau}^{k}\assign\hw_{\tau}^{k,+}-\hw_{\tau}^{k,-},\\
\hr_{\chunkh st}^{k,\var}\assign\hr_{\chunkh st}^{k,+}+\hr_{\chunkh st}^{k,-}, &  & \hw_{\tau}^{k,\var}\assign\hw_{\tau}^{k,+}+\hw_{\tau}^{k,-}.\end{eqnarray*}
The above transform into each other, see \eqref{eq:hw transforms-to hi chunk-of-curvdens}.
The proofs define further symbols: \comment{ $\mf i$ $\rmax$ $\bV$
$\mX$ $\mJ$ Assumption \ref{ass:model}, $\mmc$ $\zD$ \eqref{eq:perron frobenius, dimension, measure},
$\mmi$ $\mIN\times\mJ$ $\msb$ $\mmbc$ $\mmbi$ $\icdens$ next,
$\bp{\tau}x$ for finite $\tau\in\mI^{*}$ \eqref{eq:def psi}, $\bplim{\omega}x$
for $\omega\in\mIN$ \eqref{eq: def psi limit}, $\mJ_{\ze}$ $\widetilde{\mJ_{\ze}}$
\eqref{eq:DEF parallel set}, $C_{k}\left(\mJ_{\ze},\cdot\right)$
\eqref{eq:Ck mirrored}, $a$ $\Rstart$ $\left(\eta,u\right)$ Section
\ref{sub:Formulas-(dumping-ground)}, $\Along$ \eqref{eq:DEF A balls},
$\R\left(\cdot,\cdot\right)$ \eqref{eq:predef renewal radius R},
$\entropy$ \eqref{eq:DEF H is entropy}, $\hr$ \eqref{eq:Def hr nontransformed},
$\hw$ \eqref{eq:DEF hw finite-word} and \eqref{eq:DEF hw limiting word},
$\ex{\msb}$ \eqref{eq:DEF group extended shift}, $\G_{\eta u}$
$\HG$ \eqref{eq: DEF Brin group}, $\fibreavg f{\omega}$ \eqref{eq:DEF f limit},
$C_{k}^{\text{frac}}\left(\mJ,\cdot\right)$ \eqref{eq:limit of curvature},
$C_{\mJ}$ \eqref{eq:D-set-property Ahlfors}, $\mK$ \eqref{eq:K bounded distortion},
}$\Kf$ \eqref{eq:BDP.3}, $c_{\bpsym}$ $\mK_{\bpsym}$ Proposition
\ref{pro:(Distortion-converges)}, $C_{\Along}$ \eqref{eq:Along -- constant CA},
$\hi$ \eqref{eq: DEF hi}.

\section{Main result and Limit formula}

Recall $\mJ=\bigcup_{i\in\mI}\mf i\mJ$ is self-conformal with (OSC),
see Assumption \ref{ass:model}. \newbox\pardistorted \setbox\pardistorted=\hbox{$\left(\bplim{\omega}u\mJ\right)_{\ze}$}
%$\op{reach}\widetilde{\copy\abc}_{r}>0$
\begin{assumption}
\label{ass:Regularity-of-parallel-sets}(Regularity of parallel sets)
Throughout this entire paper, whenever $k<d$, we will assume the
following. Lebesgue almost all $\ze>0$ shall be regular values of
the Euclidean distance function to $\mJ$ and to $\bplim{\omega}u\mJ$
for $\mmbc$-almost every $\omega$.

Alternatively, let almost all $\ze>0$, $\omega\in\mIN$ satisfy
\begin{enumerate}
\item $\op{reach}\widetilde{\mJ_{\ze}}>0$, \label{enu:ass:Regularity-of-parallel-sets  1 reach}
\item if $\left(y,m\right)\in\op{nor}\widetilde{\mJ_{\ze}}$, then $\left(y,-m\right)\notin\op{nor}\widetilde{\mJ_{\ze}}$,\label{enu:ass:Regularity-of-parallel-sets 2 osculating}
\item $\op{reach}\widetilde{\copy\pardistorted}>0$. \label{enu:ass:Regularity-of-parallel-sets 3 reach distorted}
\end{enumerate}
If the surface area measure $C_{d-1}$ is restricted to $\Rd$ instead
of $\Rd\times S^{d-1}$, the assumption is needed only for $k<d-1$.
\end{assumption}
This is always satisfied in ambient dimensions $d\leq3$ due to \cite{Fu85TubularNeighMR816398}.
It is also true for any strictly self-similar set whose convex hull
is a polytope (\cite{Pokorny11CriticalValuesSelfSimilar1101.1219}).
Regular distance values imply the alternative conditions, see \cite[Theorem 4.1]{Fu85TubularNeighMR816398},
\cite[Proposition 3]{RatajZaehle03NormalCyclesByApproxMR1983898}.
\begin{assumption}
\label{ass:(Uniform-integrability)}(Uniform integrability) Assume
at least one of the following:
\begin{enumerate}
\item $k=d$ (Minkowski content),
\item $k=d-1$ (Surface area content),
\item \[
\esup\limits _{\ze>0,x\in\mJ}\ze^{-k}C_{k}^{\var}\left(\mJ_{\ze},\Int B\left(x,a\ze\right)\right)<\infty,\]
 $\mmc$-essential in $x$, Lebesgue in $\ze$.\label{(regularity ass) enu: essential sup}
\item (Zygmund space) \[
h\left(\ze,\omega,x\right)\assign\ze^{-k}\sup_{M>0}\frac{1}{M}\sum_{m=0}^{M-1}C_{k}^{\var}\left(\left(\bp{\widetilde{\omega\vert m}}u\mJ\right)_{\ze},\bp{\widetilde{\omega\vert m}}u\Int B\left(x,a\ze\right)\right)\]
 satisfies both \begin{eqnarray*}
\int_{\mIN\times\mJ}\int_{\R\left(x,1\right)}^{\R\left(x,0\right)} & \max\left\{ 0,h\left(\ze,\omega,x\right)\,\ln h\left(\ze,\omega,x\right)\right\}  & \frac{d\ze}{\ze}d\mmbi\left(\omega,x\right)<\infty,\\
\int_{\mIN\times\mJ}\int_{0}^{\Rstart} & \sup_{0<\Rvar\leq\frac{\Rstart}{2}}\frac{1_{\chunkh{\max\left\{ \R\left(x,0\right),\Rvar\right\} }{\Rstart}}\left(\ze\right)C_{k}^{\var}\left(\mJ_{\ze},\Int B\left(x,a\ze\right)\right)}{\ln\left(\Rstart/\Rvar\right)\,\ze^{k}} & \frac{d\ze}{\ze}d\mmbi\left(\omega,x\right)<\infty.\end{eqnarray*}

\item As above, except integrating $h\ln h$ from $\R\left(x,1\right)/\mK$
to $\R\left(x,0\right)\mK$ and with $h\left(\ze,\omega,x\right)\assign$
\[
\ze^{-k}\sup_{M>0}\frac{1}{M}\sum_{m=0}^{M-1}C_{k}^{\var}\left(\left(\bp{\widetilde{\omega\vert m}}u\mJ\right)_{\ze},\bp{\widetilde{\omega\vert m}}u\Int B\left(x,a\ze\right)\right)\,\chunk{\R\left(x,1\right)}{\R\left(x,0\right)}\left(\frac{\ze}{\zrp{\widetilde{\omega\vert m}}ux}\right).\]
 
\item The family of functions\begin{equation}
\left(\omega,x,\ze\right)\mapsto\frac{1}{\ln\frac{\Rstart}{\Rvar}}\left[\hr_{\chunkh{\max\left\{ \R\left(x,0\right),\Rvar\right\} }{\Rstart}}^{k,\var}\left(x,\ze,1\right)+\sum_{m\in\mathbb{N}}1\left\{ \ze\zr{\widetilde{\omega\vert m}}u>\Rvar\right\} \hw_{\widetilde{\omega\vert m}}^{k,\var}\left(x,\ze,1\right)\right]\label{eq:uniformly integrand Thm}\end{equation}
is uniformly $\ze^{-1}d\ze\, d\mmbi\left(\omega,x\right)$-integrable
for all parameters $\Rvar\in\left(0,\Rstart/2\right)$.
\item As above, except the integration variable $x$ replaces $u$, including
inside $\hw_{\widetilde{\omega\vert m}}^{k,\var}$.
\end{enumerate}
\end{assumption}
Intuitively, the assumption limits how much curvature the overlap
sets may carry. More precisely, let $\tau$, $\vartheta\in\mI^{*}$
be different shortest words such that the diameters of $\bp{\widetilde{\omega|m}}u\mf{\tau}\mJ$
and $\bp{\widetilde{\omega|m}}u\mf{\vartheta}\mJ$ are smaller than
$\ze a^{-1}/\sqrt{2}$. Then to check (uniform) integrability of its
$\hw_{\widetilde{\omega\vert m}}^{k,\var}$- or $\ze^{-k}C_{k}^{\var}\left(\left(\bp{\widetilde{\omega|m}}u\mJ\right)_{\ze},\cdot\right)$-mass,
only the intersection of $\bp{\widetilde{\omega\vert m}}u\Int B\left(x,a\ze\right)$
with the union of overlap sets $\left(\bp{\widetilde{\omega|m}}u\mf{\tau}\mJ\right)_{\ze}\cap\left(\bp{\widetilde{\omega|m}}u\mf{\vartheta}\mJ\right)_{\ze}$
of all such pairs $\tau$, $\vartheta$ matters. The curvature on
the non-overlap portion of $\bp{\widetilde{\omega\vert m}}u\Int B\left(x,a\ze\right)$
satisfies item \eqref{(regularity ass) enu: essential sup} above,
see \cite[Theorem 4.1]{Zaehle11SelfsimRandomFractalsMR2763731}. 

In case of self-similar fractals, further sufficient conditions are
the Strong Curvature Bound Condition \cite{Winter10CurvBoundsArXiv10102032}
and polyconvex parallel sets \cite[Lemma 5.3.2]{Win08MR2423952}.
Their advantage is they do not involve parallel sets widths $\ze$
of all orders. (In our Zygmund space condition, $x\mapsto\R\left(x,1\right)$
is bounded from below if the Strong Separation Condition holds.)

Not all self-conformal sets satisfy item \eqref{(regularity ass) enu: essential sup}
(see \cite{RatajZaehle10CurvatureDensitiesSelfSimilarSets}), hence
the more general conditions.

Recall $\hw$ \eqref{eq:DEF hw limiting word} and  $\fibreavg f{\omega}$
\eqref{eq:DEF f limit}.
\begin{thm}
\label{thm:Main result}Let $k\in\left\{ 0,\dots,d\right\} $, and
$\mJ$ be a self-conformal set (Assumption \ref{ass:model}). Suppose
Assumptions \ref{ass:Regularity-of-parallel-sets} (only if $k\leq d-1$)
and \ref{ass:(Uniform-integrability)} hold. Then for any continuous
$f:\bV\times S^{d-1}\ra\mathbb{R}$, the following limit exists and
is finite: \begin{eqnarray}
C_{k}^{\text{frac}}\left(\mJ,f\right) & \assign & \lim_{\Rvar\searrow0}\frac{1}{\ln\Rstart/\Rvar}\int\limits _{\Rvar}^{\Rstart}f\left(\ex z\right)\ze^{\zD-k}dC_{k}\left(\mJ_{\ze},\ex z\right)\frac{d\ze}{\ze}\label{eq:limit of curvature}\\
 & = & \frac{\zD}{\entropy}\int\limits _{\mIN\times\mJ}\int\limits _{0}^{\infty}\hw_{\widetilde{\omega}}^{k}\left(x,\ze,\fibreavg f{\omega}\right)\,\frac{d\ze}{\ze}d\mmbi\left(\omega,x\right)\nonumber \end{eqnarray}
 \begin{multline*}
=\frac{\zD}{\entropy}\int\limits _{\mIN\times\mJ}\int\limits _{\R\left(x,1\right)}^{\R\left(x,0\right)}\int\limits _{\op{nor}\widetilde{\mJ_{\ze}}}\frac{\ze^{\zD-k}\:\Along\left(\left|\bplim{\omega}xz\right|,\ze\right)\,\fibreavg f{\omega}\left(\ex z\right)}{\int\left|\bplim{\omega}x^{\prime}y\right|^{\zD}\A{\bplim{\omega}xy}{\bplim{\omega}xz}{\ze}\, d\mmc\left(y\right)}\\
dC_{k}\left(\left(\bplim{\omega}x\mJ\right)_{\ze},\bplim{\omega}x\ex z\right)\,\frac{d\ze}{\ze}d\mmbi\left(\omega,x\right).\end{multline*}

\end{thm}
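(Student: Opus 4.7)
The overall plan is to convert the Cesaro $\ze$-average into a Birkhoff ergodic average under the group-extended two-sided shift $\ex{\msb}$, exploiting the self-conformal structure via bounded distortion. The roles of the objects introduced in the Preliminaries should then become transparent: the renewal radii $\R(x,m)$ single out the right ``window'' of scales associated to each dynamical depth $m$; the $\Along$-weights form a partition of unity absorbing the base point; the distortion $\bpsym$ absorbs the non-affine part of $\mf\tau$; and the Brin group encodes the residual $O(d)$-cocycle on the sphere factor.

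First I would use the renewal windows: for $\mmbi$-a.e.\ $(\omega,x)$, the scales $\ze/\zr{\widetilde{\omega|m}}{u}\in(\R(x,1),\R(x,0)]$ select the iteration depth $m$, and these windows essentially partition $(0,\Rstart]$. On each window, the local determination property \eqref{eq:Ck locality} together with the decomposition $\mJ=\bigcup_i\mf i\mJ$ replaces the global measure $C_k(\mJ_\ze,\cdot)$ by $C_k((\mf{\widetilde{\omega|m}}\mJ)_\ze,\cdot)$ on the ball $B(\mf{\widetilde{\omega|m}}u,a\ze)$; the uniform integrability (Assumption~\ref{ass:(Uniform-integrability)}) controls the overlap regions where this replacement fails. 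The scaling property \eqref{eq:Ck scaling} then pushes $C_k((\mf{\widetilde{\omega|m}}\mJ)_\ze,\cdot)$ through the similarity part of $\mf{\widetilde{\omega|m}}'$, producing precisely the expression $\hw_{\widetilde{\omega|m}}^{k}$ of \eqref{eq:DEF hw finite-word}, up to the orthogonal cocycle $(\mf{\widetilde{\omega|m}}')^{\orth}$ that will drive the Brin-group action on the sphere factor.

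Second, Birkhoff's ergodic theorem for $\ex{\msb}$ delivers the convergence. Because $-\frac{1}{m}\ln\zr{\widetilde{\omega|m}}{u}\to\entropy/\zD$ almost surely, the number of iteration depths contributing to $\ze>\Rvar$ grows like $M\sim(\zD/\entropy)\ln(\Rstart/\Rvar)$, which is exactly where the pre-factor $\zD/\entropy$ originates. Applying the ergodic theorem to
\begin{equation*}
\frac{1}{M}\sum_{m=0}^{M-1}\int \hw_{\widetilde{\omega|m}}^{k}(x,\ze,f)\,\frac{d\ze}{\ze}\,d\mmbi(\omega,x)
\end{equation*}
and replacing the finite words $\widetilde{\omega|m}$ by the limit distortion $\bplim{\omega}{u}$ via Proposition~\ref{pro:(Distortion-converges)}, the expression converges to the spatial integral of $\hw_{\widetilde{\omega}}^{k}(x,\ze,\fibreavg{f}{\omega})$ against $\ex{\mmbi}=\mmbi\otimes\HG$. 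The fibre-averaged test function $\fibreavg{f}{\omega}$ appears because the orthogonal cocycle $(\mf{\widetilde{\omega|m}}')^{\orth}$ is ergodic only on the Brin subgroup $\G_{\eta u}\subseteq O(d)$; the conditional expectation onto the $\ex{\msb}$-invariant sigma-algebra is precisely integration against $\HG$, which unfolds into the definition \eqref{eq:DEF f limit} of $\fibreavg{f}{\omega}$.

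The hard part will be the second step: upgrading pointwise Birkhoff convergence to $L^{1}$-convergence of the full Cesaro average. The integrand $\hw_{\widetilde{\omega|m}}^{k,\var}$ is unbounded in general because it sees the overlap curvature, so uniform integrability of \eqref{eq:uniformly integrand Thm}, or one of its Zygmund-type sufficient conditions, is indispensable; this is also why Assumption~\ref{ass:(Uniform-integrability)} is essentially sharp. A secondary technical issue is the boundary error between the clean $\int_{0}^{\infty}d\ze/\ze$ in the limit formula and the truncated dynamical sum, which is controlled by the second integrability bound in Assumption~\ref{ass:(Uniform-integrability)}(4). Once these uniform bounds are in place, the integral formula in the statement follows from unfolding the definition of $\hw_{\widetilde{\omega}}^{k}$ and using \eqref{eq:lem:(Perron-Frobenius-operator)} to express the $\mmbi$-integral in the form displayed.
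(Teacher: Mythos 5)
Your outline follows the same architecture as the paper's Proposition \ref{pro:(core of proof)}: Fubini exposes $\mmc$, the renewal radii $\R(x,m)$ split the $d\ze$-integral into depth-$m$ chunks, locality and scaling of $C_k$ turn each chunk into $\hw_{\widetilde{\omega|m}}^{k}$ (Lemma \ref{lem:preimages of hr, hw curvdens}), the Perron--Frobenius operator passes to the two-sided shift, Lemma \ref{lem:Birkhoff contraction ratio} produces the prefactor $\zD/\entropy$, and Assumption \ref{ass:(Uniform-integrability)} lets the $\Rvar\to0$ limit pass inside the $\ze^{-1}d\ze\,d\mmbi$-integral.

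However, there is a genuine gap at the heart of the Cesaro limit. After the shift passage the summand is of the form $\alpha_m(b_m)$ with $\alpha_m(\cdot)\assign\hw_{\widetilde{\omega|m}}^{k}(x,\ze,\cdot)$ and $b_m\assign f\circ\mf{\widetilde{\omega|m}}\big/\icdens(\mf{\widetilde{\omega|m}}x)$. These two $m$-dependencies converge in \emph{different} modes: $\alpha_m$ converges weakly (as a signed measure) to $\hw_{\widetilde{\omega}}^{k}(x,\ze,\cdot)$ by Proposition \ref{pro:(Distortion-converges)}, Fact \ref{fac:continuity of curvature} and Lemma \ref{lem:hw-converges}, while $b_m$ Cesaro-averages uniformly to $\fibreavg f{\omega}$ by the group-extended Birkhoff theorem of Lemmas \ref{lem:Birkhoff on test function} and \ref{lem:ergodic-dynamical-system}. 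The sum $\frac{1}{M}\sum_m\alpha_m(b_m)$ is \emph{not} a Birkhoff sum $\frac{1}{M}\sum g\circ T^m$, so ``applying the ergodic theorem'' to it directly is not legitimate, and ``replacing the finite words by the limit distortion'' is not an independent substitution that can be made inside a Cesaro average whose test function also varies with $m$. The paper bridges this with the vector Toeplitz/resummation Lemma \ref{lemresummation}, whose hypotheses --- weak convergence of $\alpha_m$, equicontinuity of $\left\{ b_m\right\} $, sup-norm Cesaro convergence of $b_m$ --- are precisely what Proposition \ref{pro:(Distortion-converges)} and the cited lemmas supply; without it your two limit processes do not combine, and your sketch gives no substitute. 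Two smaller inaccuracies are worth noting. Assumption \ref{ass:(Uniform-integrability)} does not ``control overlap regions where the replacement fails'': the localization $\mJ_\ze\leadsto(\mf{x|m}\mJ)_\ze$ is \emph{exact} on the support of $\Along$ by Lemma \ref{lem:(Renewal-radii)} \eqref{enu:lem:(Renewal-radii) 4 intersection mJ}; the assumption's actual role is a Vitali-type interchange of $\lim_{\Rvar\to0}$ and $\int$ for the signed integrand built from $C_k^{\pm}$, and the overlap sets matter only because they are where $\ze^{-k}C_k^{\var}$ can blow up. Finally, the theorem asserts two integral formulas (one in $\bplim{\omega}u$, one in $\bplim{\omega}x$); the paper obtains the second by re-running the whole argument with $u$ replaced by $x$ and then identifying the two via \eqref{eq:hw transforms-to hi chunk-of-curvdens} and the coordinate change $\ze\mapsto\ze/\zrp{\widetilde{\omega}}ux$, a step your proposal does not address.
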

The first limit formula formally depends on $u$ via $\bplim{\omega}u$
inside $\hw_{\widetilde{\omega}}^{k}$. Its advantage is less objects
depend on the integration variable $x$. The second formula is better
adapted to the dynamical system variable $\left(\omega,x\right)$
and has $u$ replaced with $x$. We conjecture the uniform integrability
assumption \eqref{eq:uniformly integrand Thm} cannot be weakened,
see Remark \ref{rem:sharp}. The proof is postponed after Remark \ref{rem:sharp}.
Future work will simplify the formula for special cases.

For readers interested only in the Minkowski content, or in non-directional
fractal curvature measures on $\Rd$ instead of $\Rd\times S^{d-1}$:
\begin{cor}
(Non-directional version) Assume the situation of the theorem, except
that $f:\Rd\times S^{d-1}$ does not depend on its $S^{d-1}$ coordinate,
and positive reach (Assumption \ref{ass:Regularity-of-parallel-sets})
is only needed for $k\leq d-2$. Then the limit \eqref{eq:limit of curvature}
exists. That limit formula holds, and $C_{k}^{\text{frac}}\left(\mJ,\cdot\right)$
is proportional to $\mmc$, and $\fibreavg f{\omega}=\int fd\mmc$. 
\end{cor}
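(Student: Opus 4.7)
My plan is to deduce the corollary as a direct specialization of Theorem~\ref{thm:Main result}, with the relaxed regularity at $k=d-1$ coming from the clause already built into Assumption~\ref{ass:Regularity-of-parallel-sets}.

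First I would verify the claim $\fibreavg f{\omega}=\int f\,d\mmc$ by direct inspection of definition~\eqref{eq:DEF f limit}. When $f$ does not depend on its $S^{d-1}$ coordinate, the integrand collapses to $f(\hat y)$; the transformations by $\left(\bplim{\eta}u^{\prime}\hat y\right)^{\orth-1}$, $\hat g$, and $\left(\bplim{\omega}u^{\prime}z\right)^{\orth}$ become irrelevant, and the inner integrals over $\G_{\eta u}$ (against the probability $\HG$) reduce to multiplication by $1$. What remains is $\int_{\mJ}f(\hat y)\,d\mmc(\hat y)$, a constant independent of $\omega$, $z$, and $n$.

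Next I would substitute this constant into the limit formula of Theorem~\ref{thm:Main result}. Because $\fibreavg f{\omega}$ is constant and $\hw_{\widetilde\omega}^{k}$ depends linearly on its $f$-slot (see~\eqref{eq:DEF hw limiting word}), the constant pulls straight out of both nested integrations, leaving
\[
C_{k}^{\text{frac}}(\mJ,f)=\Bigl(\int_{\mJ}f\,d\mmc\Bigr)\cdot\frac{\zD}{\entropy}\int_{\mIN\times\mJ}\int_{0}^{\infty}\hw_{\widetilde{\omega}}^{k}(x,\ze,1)\,\frac{d\ze}{\ze}\,d\mmbi(\omega,x).
\]
The trailing double integral is a finite, purely geometric constant (finiteness is inherited from Theorem~\ref{thm:Main result} applied to $f\equiv 1$). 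Viewed as a measure on $\Rd$ through its action on non-directional test functions, $C_{k}^{\text{frac}}(\mJ,\cdot)$ is therefore proportional to $\mmc$. Existence of the limit~\eqref{eq:limit of curvature} and validity of the integral formula are then nothing but Theorem~\ref{thm:Main result} restricted to this class of test functions.

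Finally, the reason Assumption~\ref{ass:Regularity-of-parallel-sets} is only needed for $k\leq d-2$ is that its own last sentence already records the relaxation: when $C_{d-1}$ is restricted to the $\Rd$-factor instead of $\Rd\times S^{d-1}$, positive reach is not required at $k=d-1$, because the $\Rd$-marginal of $C_{d-1}(\mJ_{\ze},\cdot)$ coincides with half the $(d-1)$-dimensional Hausdorff surface measure on $\partial\mJ_{\ze}$, which is well defined for any compact parallel set. Since a non-directional $f$ pairs only against that $\Rd$-marginal, the surface-area case falls automatically into the relaxed regime. I expect the main (though routine) obstacle to be bookkeeping: verifying that every invocation of Assumption~\ref{ass:Regularity-of-parallel-sets} inside the proof of Theorem~\ref{thm:Main result} at $k=d-1$ uses the curvature measure only through this $\Rd$-marginal, so that the half-Hausdorff surrogate can be substituted throughout without disturbing the covariance, localization, and continuity arguments used to pass the limit through~\eqref{eq:DEF hw limiting word}.
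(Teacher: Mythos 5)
Your proposal is correct and is the natural specialization the paper intends (the corollary is stated without a separate proof precisely because the theorem and its supporting lemmas already carry the needed hooks). You correctly read $\fibreavg f{\omega}=\int f\,d\mmc$ off \eqref{eq:DEF f limit} — and indeed Lemma \ref{lem:Birkhoff on test function} already records this simplification explicitly — and the proportionality to $\mmc$ follows by pulling that constant out of the bilinear pairing in the limit formula. Your final worry about bookkeeping is already resolved in the paper: Proposition \ref{pro:Fubini} and Proposition \ref{pro:(core of proof)} each state the relaxed regularity hypothesis ($k\leq d-2$ resp.\ $k<d-1$) for non-directional $f$, and Fact \ref{fac:continuity of curvature}\eqref{enu:fac:continuity of curvature first-order} supplies weak convergence of the $\Rd$-marginal of $C_{d-1}$ without any reach hypothesis, which is exactly what drives the last line of Assumption \ref{ass:Regularity-of-parallel-sets}; so no further verification is required.
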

In the self-similar setting, previous work used various definitions
of $\Along$ (Remark \ref{rem:Our-A-unifies}, \cite[Example 2.1.1]{RatajZaehle10CurvatureDensitiesSelfSimilarSets}).
We unify those approaches.
\begin{cor}
Let every $\mf i$ be a similarity, in addition to Assumptions \ref{ass:model},
\ref{ass:Regularity-of-parallel-sets}, \ref{ass:(Uniform-integrability)}.
(Note $\bpsym_{\dots}=\id$, $\zrall{\bpsym_{\dots}}{}=1$ therein.)
Let $\Along$ be as above, or the indicator function of any neighborhood
net from \cite[Example 2.1.1]{RatajZaehle10CurvatureDensitiesSelfSimilarSets},
or defined any other way that makes Lemma \ref{lem:(neighborhood-net)}
true. Writing $\mathcal{H}^{\zD}$ for the $\zD$-dimensional Hausdorff
measure, $\mathcal{H}_{\G}$ for the Haar probability on $\G\assign\overline{\left<\mf i^{\prime}\,:\, i\in\mI\right>}_{O\left(d\right)}$,
and $c_{R}$ for the constants in \eqref{eq:predef renewal radius R},
we have \begin{alignat*}{2}
C_{k}^{\text{frac}}\left(\mJ,f\right) & = & \frac{\zD}{\entropy}\int\limits _{\mJ}\int\limits _{c_{R}\dist\left(x,\mf{x|1}X^{c}\right)}^{c_{R}\dist\left(x,X^{c}\right)}\int\limits _{\op{nor}\widetilde{\mJ_{\ze}}}\frac{\ze^{\zD-k}\:\Along\left(\left|z-x\right|,\ze\right)\,\fibreavg f{}\left(\ex z\right)}{\int_{\mJ}\A yz{\ze}\, d\mathcal{H}^{\zD}\left(y\right)}dC_{k}\left(\mJ_{\ze},\ex z\right)\,\frac{d\ze}{\ze}d\mathcal{H}^{\zD}\left(x\right),\\
\fibreavg f{}\left(z,n\right) & \assign & \mathcal{H}^{\zD}\left(\mJ\right)^{-1}\,\int_{\mJ}\int_{\G}f\left(x,gn\right)\, d\mathcal{H}^{\zD}\left(x\right)d\mathcal{H}_{\G}\left(g\right).\end{alignat*}
\end{cor}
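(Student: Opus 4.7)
The plan is to specialize the second integral formula of Theorem \ref{thm:Main result} to the strictly self-similar setting and to verify that each factor collapses to the expression stated in the corollary.

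First I would observe that when every $\mf i$ is a similarity, each iterate $\mf{\widetilde{\omega|n}}$ is itself a similarity and coincides with its own affine linearization modulo translation. A direct computation from \eqref{eq:def psi}--\eqref{eq: def psi limit} yields $\bp{\widetilde{\omega|n}}xy = \bplim{\omega}xy = y-x$, hence $\zrplim{\omega}xy \equiv 1$ and $(\bplim{\omega}x')^{\orth} = \id$. This identifies $\bplim{\omega}x\mJ$ with the translate $\mJ-x$, so $(\bplim{\omega}x\mJ)_\ze = \mJ_\ze - x$, and translation covariance \eqref{eq:Ck scaling} reduces $dC_{k}((\bplim{\omega}x\mJ)_\ze, \bplim{\omega}x\ex z)$ to $dC_{k}(\mJ_\ze, \ex z)$. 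Analogous simplifications turn $\Along(|\bplim{\omega}xz|, \ze)$ into $\A{x}{z}{\ze}$, turn $\A{\bplim{\omega}xy}{\bplim{\omega}xz}{\ze}$ into $\A{y}{z}{\ze}$, and kill the $\zrplim{\omega}xy^{\zD}$ weight in the denominator.

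Next I would collapse the ergodic fibre. Because $(\bplim{\cdot}\cdot')^{\orth} = \id$ throughout, the definition \eqref{eq:DEF f limit} of $\fibreavg f{\omega}$ becomes $\omega$- and $(\eta,u)$-independent, and the Brin group \eqref{eq: DEF Brin group} collapses to $\G = \overline{\langle (\mf i')^{\orth} \rangle}_{O(d)}$. In the self-similar regime the Bernoulli measure on $\mI^{\mathbb{N}}$ with weights $r_i^{\zD}$ is already $\ms$-invariant, so $\mmi = \mmc$, $\icdens \equiv 1$, and $\mmc$ equals the normalized Hausdorff measure on $\mJ$. Since the integrand is now $\omega$-independent, integration against $\mmbi$ reduces to its $x$-marginal $\mmc$; the normalization constant $\mathcal{H}^{\zD}(\mJ)^{-1}$ cancels between the outer $d\mmc(x)$ and the $d\mmc(y)$ in the denominator, yielding $d\mathcal{H}^{\zD}(x)$ outside and $\int \A{y}{z}{\ze}\, d\mathcal{H}^{\zD}(y)$ in the denominator as stated.

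For the integration limits I would rewrite the renewal radii: since $\zr{x|n}{\ms^n x}$ is the (constant) contraction ratio of $\mf{x|n}$, the identity $\zr{x|n}{\ms^n x}\,\dist(\ms^n x, \mX^c) = \dist(x, \mf{x|n}\mX^c)$ gives $\R(x,m) = c_{R}\sup_{n \geq m} \dist(x, \mf{x|n}\mX^c)$. The nesting $\mf{x|n}\mX \subseteq \mf{x|m}\mX$ for $n \geq m$ (a consequence of $\mf i\mX \subseteq \mX$) forces the complements to grow in $n$, so the supremum is attained at $n=m$: $\R(x,0) = c_{R}\dist(x,\mX^c)$ and $\R(x,1) = c_{R}\dist(x,\mf{x|1}\mX^c)$, matching the stated integration bounds.

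The only non-routine ingredient is the freedom to substitute any admissible $\Along$. Here I would invoke Lemma \ref{lem:(neighborhood-net)}, whose role inside the proof of Theorem \ref{thm:Main result} is precisely to show that the ratio $\Along(|z-x|,\ze)/\int \Along(|y-z|,\ze)\, d\mmc(y)$ produces the same limit under $\ze^{-1}d\ze$ integration for any admissible $\Along$. I expect this to be the main obstacle: confirming the lemma's hypotheses for a generic neighborhood net requires a uniform covering estimate tailored to the self-similar partition structure and compatible with Assumption \ref{ass:(Uniform-integrability)}. Once that is in place, the remaining substitutions are mechanical and the corollary follows.
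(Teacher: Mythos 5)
Your proposal is correct and takes the only natural route: specialize the second integral formula of Theorem \ref{thm:Main result} using $\bpsym_{\cdots}=\id$, $\zrall{\bpsym_{\cdots}}{}=1$, $\mmi=\mmc=\mathcal{H}^{\zD}/\mathcal{H}^{\zD}(\mJ)$, $\icdens\equiv1$, collapse of the $\omega$-dependence and of the Brin group, and the observation that $\sup_{n\geq m}\dist(x,\mf{x|n}\mX^c)$ is attained at $n=m$ thanks to $\mf i\mX\subseteq\mX$; the paper itself states the corollary without a separate proof, so this is effectively the intended argument. One small misreading: your closing worry about verifying Lemma \ref{lem:(neighborhood-net)} for an arbitrary neighborhood net is unnecessary, since the corollary's statement explicitly takes that lemma's validity as a hypothesis (``or defined any other way that makes Lemma \ref{lem:(neighborhood-net)} true''), and for the concrete nets of \cite[Example 2.1.1]{RatajZaehle10CurvatureDensitiesSelfSimilarSets} Remark \ref{rem:Our-A-unifies} already records that their Lemma 2.1.3 replaces the Lipschitz-continuity item.
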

\begin{rem}
\label{rem:Arguably renormalize Ck}Arguably, the fractal curvature-direction
measures should instead be normalized as \begin{eqnarray}
\hat{C}_{k}^{\op{frac}}\left(\mJ,\cdot\right) & \assign & \binom{d-\zD_{k}}{d-k}^{-1}\pi^{\frac{\zD_{k}-k}{2}}\frac{\Gamma\left(\frac{d-\zD_{k}}{2}+1\right)}{\Gamma\left(\frac{d-k}{2}+1\right)}C_{k}^{\op{frac}}\left(\mJ,\cdot\right),\label{eq:renormalize fractal curvature}\\
\zD_{k} & \assign & \min\left\{ k,\overline{\op{dim}}_{\text{Minkowski}}\mJ\right\} .\nonumber \end{eqnarray}
The motivation is, embedding a set $\myset$ into a larger ambient
dimension $d$ does not create geometric information. Stronger yet,
Zähle conjectured the fractal curvatures $k>\zD$ never provide new
information. This is proven in two cases: Fractal curvatures of an
order $k$ greater than the dimension of a {}``classical'' set $\myset$
simply repeat the highest available order of curvature (due to the
Steiner formula), \[
\hat{C}_{k}^{\op{frac}}\left(\myset,\cdot\right)=C_{\min\left\{ k,\op{dim}\myset\right\} }\left(\myset,\cdot\right)\,\text{ if }\op{reach}\myset>0.\]
 For a general, closed, Lebesgue null set $\myset$, the fractal Lebesgue
measure repeats the next-lower measure, \cite{RatajWinter09MeasuresOfParallelSetsArxiv09053279,RatajWinter11MinkowskiMeasurabilitySurface1111.1825}:
\begin{equation}
\hat{C}_{d}^{\op{frac}}\left(\myset,\cdot\times S^{d-1}\right)=\hat{C}_{d-1}^{\op{frac}}\left(\myset,\cdot\times S^{d-1}\right)\,\text{ if }C_{d}\left(\myset\right)=0.\label{eq:Cd_Cd-1}\end{equation}

\end{rem}

\section{Proofs}

\subsection{\label{sub:Exposing-the-dynamical}Exposing the dynamical system}

The purpose of the next proposition is to rephrase the problem, of
taking Cesaro limits of volume or curvature of parallel sets, into
the language of the ergodic shift dynamical system $\left(\mJ,\ms,\mmi\right)$.
The geometric intuition is to study the curvature in a small ball
around $x\in\mJ$ as the parallel set width $\ze$ decreases. The
conformal measure $\mmc$ of this ball balances out the rescaling
factor. Instead of actual balls, we will use a tent function $\Along$
supported by the ball \eqref{eq:DEF A balls}. We introduce the extra
integrand $1=\int Ad\mmc/\int Ad\mmc$ into the curvature and use
Fubini to make the {}``dynamical'' measure $\mmc$ accessible. The
analytic intuition is to convolute the curvature measure with an integrable
bump function $\Along$. If we had a sensible (harmonic analysis)
group structure on the fractal, it would preserve the metric. So we
make the bump $x\mapsto\A xz{\ze}$ depend only on pairwise distances
$\left|x-z\right|$. Lacking a global group, we normalize the bump
mass locally at each midpoint $x$.

This leads us to study the dynamical system's integrands $\hr$, $\hw$
for the rest of this paper.

Recall from \eqref{eq:Def hr nontransformed}, \[
\hr_{\chunkh st}^{k,\pm}\left(x,\ze,f\right)\assign\chunk st\left(\ze\right)\int_{\op{nor}\widetilde{\mJ_{\ze}}}f\left(\ex z\right)\frac{\ze^{\zD-k}\,\A xz{\ze}}{\int\A yz{\ze}d\mmc\left(y\right)}\, dC_{k}^{\pm}\left(\mJ_{\ze},\ex z\right).\]

\begin{rem}
\label{rem:measurability}Borel measurability of $\hr$ and $\hw$
as a function of $\ze$ can be proved as in \cite[Lemma 2.3.1]{RatajZaehle10CurvatureDensitiesSelfSimilarSets}.
The mapping $\ze\mapsto C_{k}\left(\mJ_{\ze},\cdot\right)$ is weak{*}-continuous
at almost every $\ze$ (see \cite[Corollary 2.3.5]{Zaehle11SelfsimRandomFractalsMR2763731}).
The non-osculating Assumption \ref{ass:Regularity-of-parallel-sets}
\eqref{enu:ass:Regularity-of-parallel-sets 2 osculating} is used
here. Since $C_{k}\left(\left(\bpsym\mJ\right)_{\ze},\cdot\right)$
agrees with a (limit of) pullback of some $C_{k}\left(\left(\bpsym\mJ\right)_{\text{const }\ze},\cdot\right)$
on sets we integrate over, we will see it is automatically measurable.\end{rem}
\begin{prop}
\label{pro:Fubini}Let $k\in\left\{ 0,\dots,d\right\} $, $\Rvar>0$,
and $f:\bV\times S^{d-1}\ra\left[0,\infty\right)$ be any nonnegative,
measurable function. Suppose Assumption \ref{ass:model} governs our
self-conformal set $F$. Its parallel sets $\mJ_{\ze}$ shall be regular
if $k\leq d-1$ (but only $k\leq d-2$ for $f:\bV\ra\left[0,\infty\right)$),
Assumption \ref{ass:Regularity-of-parallel-sets}. Then we have 

\begin{equation}
\frac{1}{\ln\Rstart/\Rvar}\int_{\Rvar}^{\Rstart}f\left(\ex z\right)\,\ze^{\zD-k}dC_{k}^{\pm}\left(\mJ_{\ze},\ex z\right)\frac{d\ze}{\ze}=\int_{\mJ}\int_{0}^{\infty}\frac{1}{\ln\Rstart/\Rvar}\,\hr_{\chunkh{\Rvar}{\Rstart}}^{\left(k,\pm\right)}\left(x,\ze,f\right)\,\frac{d\ze}{\ze}d\mmc\left(x\right).\label{eq:integral-of-curvature-density}\end{equation}
\end{prop}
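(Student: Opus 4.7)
The plan is to prove the identity by a direct Tonelli calculation, exploiting the fact that the normalized kernel $x\mapsto A(|x-z|,\ze)/\int A(|y-z|,\ze)\,d\mmc(y)$ integrates to $1$ against $d\mmc(x)$ whenever its denominator is positive. Once integration in $x$ is carried out, the ratio collapses to~$1$ and the statement reduces to the trivial identity $\int_{\Rvar}^{\Rstart}(\cdots)d\ze/\ze = \int_0^\infty \mathbf{1}_{(\Rvar,\Rstart]}(\ze)(\cdots)d\ze/\ze$.

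First I would verify that the denominator is strictly positive (and bounded below) for every $z\in\op{nor}\widetilde{\mJ_\ze}$ and every $\ze\in(0,\Rstart]$. Such $z$ lies on $\partial\mJ_\ze$, so there is a nearest point $y_0\in\mJ$ with $|y_0-z|=\ze$. Then $A(|y_0-z|,\ze)=1-1/a>0$ since $a>1$, and by continuity $A(|y-z|,\ze)\ge c_0>0$ for all $y$ in a ball $B(y_0,\delta\ze)\cap\mJ$ with some absolute $\delta>0$. The Ahlfors regularity \eqref{eq:D-set-property Ahlfors} then gives the uniform lower bound $\int A(|y-z|,\ze)\,d\mmc(y)\ge c_0 C_{\mJ}^{-1}(\delta\ze)^{\zD}$, so the ratio inside the definition of $\hr$ is a bona fide measurable function of $(x,z,\ze)$.

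Next I would check joint measurability so that Tonelli applies. By Remark \ref{rem:measurability} (invoking Assumption \ref{ass:Regularity-of-parallel-sets}\eqref{enu:ass:Regularity-of-parallel-sets 2 osculating}), the map $\ze\mapsto C_k^{\pm}(\mJ_\ze,\cdot)$ is weak-$\ast$ continuous off a Lebesgue null set of $\ze$, and $A$ is jointly continuous, so the full integrand is measurable in $(x,\ze)$. Since $f\ge0$ and $C_k^{\pm}(\mJ_\ze,\cdot)$ are positive measures, Tonelli's theorem permits the exchange of the $x$ and $(z,\ze)$ integrations on the right-hand side of \eqref{eq:integral-of-curvature-density}. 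Performing the inner $x$-integration cancels the denominator against the numerator $\int A(|x-z|,\ze)\,d\mmc(x)$, leaving exactly the left-hand side.

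The only delicate point is the uniform positivity of the normalizing denominator; apart from that, the proposition is a one-line Fubini once the integrand is written out. There is no genuine obstacle, which is why this preliminary step is placed early: it sets up the dynamical reformulation (all subsequent analysis will act on the integrand $\hr$ rather than directly on the curvature of $\mJ_\ze$).
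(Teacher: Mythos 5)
Your proof is correct and takes essentially the same approach as the paper's: introduce the normalized tent-function kernel, verify the denominator $\int\A yz{\ze}\,d\mmc(y)$ is positive, and apply Tonelli to swap the $x$- and $(z,\ze)$-integrations. The only minor difference is that you prove the lower bound on the denominator inline (via the nearest-point argument and Ahlfors regularity), whereas the paper simply cites Lemma \ref{lem:(neighborhood-net)}\eqref{enu:lem:(neighborhood-net) D-set-property}, whose proof is the same argument in a slightly more general form.
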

\begin{proof}
At each point $z\in\mJ_{\ze}$ we may introduce an arbitrary factor
\begin{equation}
1=\frac{\int\A xz{\ze}d\mmc\left(x\right)}{\int\A yz{\ze}d\mmc\left(y\right)}.\label{eq:A cancels}\end{equation}
Lemma \ref{lem:(neighborhood-net)} \eqref{enu:lem:(neighborhood-net) Measurability},
\eqref{enu:lem:(neighborhood-net) D-set-property} makes sure $\int\Along\, d\mmc>0$
and $\Along\geq0$. Next, we may apply Fubini's theorem because we
are dealing with positive integrands and finite measures. If $\Rvar<\ze\leq\Rstart$:
\begin{multline*}
\ze^{\zD-k}\int_{\op{nor}\widetilde{\mJ_{\ze}}}f\left(\ex z\right)\, dC_{k}^{\pm}\left(\mJ_{\ze},\ex z\right)=\ze^{\zD-k}\int_{\op{nor}\widetilde{\mJ_{\ze}}}f\left(\ex z\right)\,\frac{\int\A xz{\ze}d\mmc\left(x\right)}{\int\A yz{\ze}d\mmc\left(y\right)}\, dC_{k}^{\pm}\left(\mJ_{\ze},\ex z\right)\\
=\int_{J}\int_{\op{nor}\widetilde{\mJ_{\ze}}}f\left(\ex z\right)\frac{\ze^{\zD-k}\,\A xz{\ze}}{\int\A yz{\ze}d\mmc\left(y\right)}\, dC_{k}^{\pm}\left(\mJ_{\ze},\ex z\right)d\mmc\left(x\right)=\int_{J}\hr_{\chunkh{\Rvar}{\Rstart}}^{\left(k,\pm\right)}\left(x,\ze,f\right)d\mmc\left(x\right).\end{multline*}
  Finally, we integrate over $\ze^{-1}d\ze$ and apply Fubini again.
\end{proof}

\subsection{\label{sub:Dualizing-the-dynamics}Dualizing the dynamics to apply
the Birkhoff theorem and leave the distortion behind}

Here we tell the story arc of this paper and treat the dynamics. The
curvature Cesaro average is split into chunks mapped onto each other
up to distortion of the underlying fractal. The local geometric covariance
matches up only their curvature measures. To compensate, the dynamical
system pulls back the test function. Asymptotics of the distortion
exist only if the ordering of the conformal maps $\mf i$ is reversed.
The Perron-Frobenius operator achieves this, but requires the two-sided
extension $\mIN\times\mJ$ of the code space. (Intuitively, summing
over all possible $n$-letter words is the same as summing over reversed
words.) It also separates the distortion inside the measure from the
dynamics map inside the test function. A vector (Kojima) Toeplitz
theorem moves the curvature measures out of the Cesaro average. The
Birkhoff theorem for the left shift fits the Cesaro-averaged, pulled-back
chunks of test function after compensating the following: The curvature
lives on a parallel set {}``beside'' the dynamical system $\mJ$.
This selects a different element from the covariance group element
that transforms the normal directions. 

This work essentially reduces the self-conformal case to similarities,
at the price of extra distortion. The distortion and the geometry
packaged in $\hr$, $\hw$ will later be treated point-wise using
the right shift $\msb^{-1}$. From a dynamics point of view, the different
roles of left and right shift explain why the conformal measure $\mmbc$
integrates the test function in the limiting $C_{k}^{\op{frac}}$.

Recall $\hr$ from \eqref{eq:Def hr nontransformed}, $\hw$ from
\eqref{eq:DEF hw finite-word} and \eqref{eq:DEF hw limiting word}.
\begin{prop}
\label{pro:(core of proof)}(core of proof) Assume $k\in\left\{ 0,\dots,d\right\} $,
$f:\bV\times S^{d-1}\ra\mathbb{R}$ continuous and $\mJ$ is self-conformal
(Assumption \ref{ass:model}). Let its parallel sets $\mJ_{\ze}$
be regular if $k<d$ (only $k<d-1$ if $f:\bV\ra\mathbb{R}$) (positive
reach Assumption \ref{ass:Regularity-of-parallel-sets}). Let the
expression \begin{equation}
\frac{1}{\ln\frac{\Rstart}{\Rvar}}\left[\hr_{\chunkh{\max\left\{ \R\left(x,0\right),\Rvar\right\} }{\Rstart}}^{k,\var}\left(x,\ze,1\right)+\sum_{n\in\mathbb{N}}1\left\{ \ze\zr{\widetilde{\omega\vert n}}u>\Rvar\right\} \hw_{\widetilde{\omega\vert n}}^{k,\var}\left(x,\ze,1\right)\right]\label{eq:uniformly integrand-1}\end{equation}
be $\Rvar$-uniformly ~$\ze^{-1}d\ze\, d\mmbc\left(\omega,x\right)$-integrable.
Then for any continuous $f:\bV\times S^{d-1}\ra\mathbb{R}$, the limit
\begin{eqnarray}
 &  & \lim_{\Rvar\searrow0}\frac{1}{\ln\Rstart/\Rvar}\int_{\Rvar}^{\Rstart}\ze^{\zD-k}\int_{\Rd\times S^{d-1}}f\left(\ex z\right)dC_{k}\left(\mJ_{\ze},\ex z\right)\frac{d\ze}{\ze}\label{eq:limit of curvature in core prop}\\
 & = & \frac{\zD}{\entropy}\int_{\mIN\times\mJ}\int_{0}^{\infty}\hw_{\widetilde{\omega}}^{k}\left(x,\ze,\fibreavg f{\omega}\right)\,\frac{d\ze}{\ze}d\mmbi\left(\omega,x\right)\nonumber \end{eqnarray}
 exists.\end{prop}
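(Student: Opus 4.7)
The plan is to convert the Cesaro integral of curvature into a Birkhoff ergodic average on the orthogonal-group extension of the two-sided shift, via a chunk decomposition along the renewal radii, Perron--Frobenius duality to reverse word order, and the ergodic theorem on the skew product $\ex{\msb}$.

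First I would apply Proposition~\ref{pro:Fubini} componentwise to $C_{k}^{\pm}$ so that the left-hand side of \eqref{eq:limit of curvature in core prop} becomes
\[
\int_{\mJ}\int_{0}^{\infty}\frac{\hr_{\chunkh{\Rvar}{\Rstart}}^{k}(x,\ze,f)}{\ln(\Rstart/\Rvar)}\,\frac{d\ze}{\ze}\,d\mmc(x).
\]
For each $x$ I would split the inner $\ze$-range along the renewal radii of \eqref{eq:predef renewal radius R} into chunks $\chunkh{\R(x,n+1)}{\R(x,n)}$ plus a leading remainder $\chunkh{\R(x,0)}{\Rstart}$. On the $n$-th chunk, the local scaling covariance \eqref{eq:Ck scaling} of $C_{k}$ together with the factorization of $\mf{x|n}$ into its pointwise similarity differential and the distortion $\bp{\widetilde{x|n}}{x}$ of \eqref{eq:def psi} rewrites the chunk of $\hr^{k}$ as $\hw_{\widetilde{x|n}}^{k}$ of \eqref{eq:DEF hw finite-word}; the denominator $\int|\bpsym^{\prime}y|^{\zD}\Along\,d\mmc$ in $\hw$ is exactly what absorbs the $\mmc$-transform.

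The resulting sum $\sum_{n}\hw_{\widetilde{x|n}}^{k}$ has word indices in the wrong direction for the distortion to converge, since only reversed iterates stabilize (Proposition~\ref{pro:(Distortion-converges)}). I would fix this by lifting the $x$-integration to the two-sided extension $(\mIN\times\mJ,\mmbi)$ through the Perron--Frobenius identity \eqref{eq:lem:(Perron-Frobenius-operator)}, so that the word $\widetilde{x|n}$ is replaced by the reversed cylinder $\widetilde{\omega|n}$ extracted from the past of $\omega$; then $\bp{\widetilde{\omega|n}}{u}\to\bplim{\omega}{u}$ exponentially turns $\hw_{\widetilde{\omega|n}}^{k}$ into $\hw_{\widetilde{\omega}}^{k}$ of \eqref{eq:DEF hw limiting word} uniformly in $\ze$. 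The Cesaro factor $(\ln(\Rstart/\Rvar))^{-1}$ is now a continuous-time average along $\ex{\msb}$ of \eqref{eq:DEF group extended shift}: Birkhoff applied to the potential $\ln\zr{x_{1}}{\ms x}$ yields the rate $-\entropy/\zD$ and hence the prefactor $\zD/\entropy$, while the orthogonal cocycle $(\mf{x|1}^{\prime}\ms x)^{\orth}$ forces the spatial averaging of $f$ over cosets of the Brin group $\G_{\eta u}$ of \eqref{eq: DEF Brin group}, producing the fiber average $\fibreavg f{\omega}$ of \eqref{eq:DEF f limit}.

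The hard part is the triple limit interchange: the Cesaro average must be commuted past the inner signed curvature integration, past the infinite chunk sum, and past the stabilization $\hw_{\widetilde{\omega|n}}^{k}\to\hw_{\widetilde{\omega}}^{k}$. This is precisely the role of the uniform integrability hypothesis \eqref{eq:uniformly integrand-1}, which I expect to combine with a vector-valued Kojima/Toeplitz lemma to push weak Cesaro limits through the signed measures $C_{k}(\mJ_{\ze},\cdot)$. A second delicate point is identifying the ergodic decomposition of the $O(d)$-skew product $\ex{\msb}$: one has to verify that $\G_{\eta u}$ is really the ergodic fiber group and that the fiber average is independent of the reference pair $(\eta,u)$, so that the Birkhoff limit produces exactly $\fibreavg f{\omega}$ rather than a finer invariant.
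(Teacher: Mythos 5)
Your proposal follows the paper's own proof essentially step for step: Fubini to introduce $\mmc$, splitting the $\ze$-integral along the renewal radii $\R(x,n)$, preimaging each chunk under $\mf{x|n}$ via locality and scaling of $C_k$ to produce $\hw$, the Perron--Frobenius identity to reverse word order on the two-sided shift, uniform integrability to justify the limit interchange, a Markov-time/Birkhoff argument to produce the factor $\zD/\entropy$, the vector Toeplitz lemma to couple the weakly converging measures $\hw_{\widetilde{\omega|n}}^k(x,\ze,\cdot)$ with the Birkhoff-averaged test functions, and the Brin-group ergodic decomposition of $\ex{\msb}$ to identify $\fibreavg f{\omega}$. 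The only imprecision is the claim that $\hw_{\widetilde{\omega|n}}^k\to\hw_{\widetilde{\omega}}^k$ ``uniformly in $\ze$'' — the convergence is only a weak limit of signed measures for each regular $\ze$ (Lemma~\ref{lem:hw-converges}), which you in fact correctly account for two sentences later when invoking the Toeplitz lemma.
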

\begin{proof}
Due to linearity, we may assume $0\leq f\leq1$.

First, we will rewrite \eqref{eq:limit of curvature in core prop}
for any fixed $\Rvar$. To prevent that the $\ze^{-1}d\ze$-integral
evaluates to $\infty-\infty$, we will work with $C_{k}^{+}$ and
$C_{k}^{-}$ separately at first. Proposition \ref{pro:Fubini} below
uses Fubini's theorem to makes $\mmc$ appear in the next line, \[
L\assign\frac{1}{\ln\frac{\Rstart}{\Rvar}}\int_{\Rvar}^{\Rstart}\int_{\op{nor}\widetilde{\mJ_{\ze}}}f\left(\ex z\right)\,\ze^{\zD-k}dC_{k}^{\pm}\left(\mJ_{\ze},\ex z\right)\frac{d\ze}{\ze}=\frac{1}{\ln\frac{\Rstart}{\Rvar}}\int_{\mJ}\int_{0}^{\infty}\hr_{\chunkh{\Rvar}{\Rstart}}^{k,\pm}\left(x,\ze,f\right)\,\frac{d\ze}{\ze}d\mmc\left(x\right).\]
 The sequence sequence $n\mapsto\R\left(x,n\right)$ is strictly falling
(Lemma \ref{lem:(Renewal-radii)} \eqref{enu:lem:(Renewal-radii) 1 monotonicity}
\eqref{enu:lem:(Renewal-radii)  2 limit}). We can split the $d\ze$
integral along it without reversing the new bounds of any chunk of
the integral. The indicator function corresponding to the bounds $\int_{\R\left(x,n+1\right)}^{\R\left(x,n\right)}$
is pulled into the subscript of $\hr$, see \eqref{eq:Def hr nontransformed}:
\[
L=\int_{\mJ}\int_{0}^{\infty}\frac{\hr_{\chunkh{\max\left\{ \R\left(x,0\right),\Rvar\right\} }{\Rstart}}^{k,\pm}\left(x,\ze,f\right)}{\ln\frac{\Rstart}{\Rvar}}+\sum_{n\in\mathbb{N}}\frac{1\left\{ \ze>\Rvar\right\} }{\ln\frac{\Rstart}{\Rvar}}\hr_{\chunkh{\R\left(x,n+1\right)}{\R\left(x,n\right)}}^{k,\pm}\left(x,\ze,f\right)\,\frac{d\ze}{\ze}d\mmc\left(x\right).\]
 Next, Lemma \ref{lem:preimages of hr, hw curvdens} below asserts
$\hr_{\chunkh{\R\left(x,n+1\right)}{\R\left(x,n\right)}}^{k,\pm}\left(x,\ze,f\right)=\hw_{x|n}^{k,\pm}\left(\ms^{n}x,\ze\zr{x|n}u^{-1},f\circ\mf{x|n}\right)$.
The geometric meaning is we preimage it under $\mf{x|n}$ and use
the covariance and locality properties of curvature. Hence, 

\[
L=\int_{\mJ}\int_{0}^{\infty}\dots+\sum_{n\in\mathbb{N}}\frac{1\left\{ \ze>\Rvar\right\} }{\ln\frac{\Rstart}{\Rvar}}\hw_{x|n}^{k,\pm}\left(\ms^{n}x,\frac{\ze}{\zr{x\vert n}u},f\circ\mf{x|n}\right)\,\frac{d\ze}{\ze}d\mmc\left(x\right).\]
Temporarily, we pull the formally infinite sum out of both integrals.
Each summand integral exists due to positivity (or uniform integrability
once proved), \[
L=\int_{\mJ}\int_{0}^{\infty}\dots+\sum_{n\in\mathbb{N}}\int_{\mJ}\int_{0}^{\infty}\frac{1\left\{ \ze>\Rvar\right\} }{\ln\frac{\Rstart}{\Rvar}}\hw_{x|n}^{k,\pm}\left(\ms^{n}x,\frac{\ze}{\zr{x\vert n}u},f\circ\mf{x|n}\right)\,\frac{d\ze}{\ze}d\mmc\left(x\right).\]
By passing to the two-sided code space and to the invariant measure
$\mmbi=\icdens\mmbc$, we can apply the shift operator, $\left(\ms^{n}\omega,\mf{\widetilde{\omega|n}}x\right)=\msb^{-n}\left(\omega,x\right)$,
\eqref{eq:lem:(Perron-Frobenius-operator)}. It consistently replaces
$x$ with $\mf{\widetilde{\omega\vert n}}x$. But left and right shifts
are inverse operations. Thus it replaces the original $\ms^{n}x$
with $\ms^{n}\mf{\widetilde{\omega\vert n}}x=x$ and $x|n$ with $\left(\mf{\widetilde{\omega\vert n}}x\right)|n=\widetilde{\omega\vert n}$:\[
L=\int_{\mJ}\int_{0}^{\infty}\dots+\sum_{n\in\mathbb{N}}\int_{\mIN\times\mJ}\int_{0}^{\infty}\frac{1\left\{ \ze>\Rvar\right\} }{\icdens\left(\mf{\widetilde{\omega\vert n}}x\right)\ln\frac{\Rstart}{\Rvar}}\hw_{\widetilde{\omega\vert n}}^{k,\pm}\left(x,\frac{\ze}{\zr{\widetilde{\omega\vert n}}u},f\circ\mf{\widetilde{\omega\vert n}}\right)\,\frac{d\ze}{\ze}d\mmbi\left(\omega,x\right).\]
We substitute $\ze/\zr{\widetilde{\omega\vert n}}u\mapsto\ze$ in
the $d\ze$ integral and put the sum back inside, \[
L=\int_{\mIN\times\mJ}\int_{0}^{\infty}\left[\dots+\sum_{n\in\mathbb{N}}\frac{1\left\{ \zr{\widetilde{\omega\vert n}}u>\frac{\Rvar}{\ze}\right\} }{\icdens\left(\mf{\widetilde{\omega\vert n}}x\right)\ln\frac{\Rstart}{\Rvar}}\hw_{\widetilde{\omega\vert n}}^{k,\pm}\left(x,\ze,f\circ\mf{\widetilde{\omega\vert n}}\right)\right]\,\frac{d\ze}{\ze}d\mmbi\left(\omega,x\right).\]
Because we want to move the $\Rvar$-limit inside both integrals,
we must assume the last line is uniformly integrable. The bounds $\mK^{-1}\leq\icdens\leq\mK$
and $\left\Vert f\circ\mf{\widetilde{\omega\vert n}}\right\Vert \leq1$
simplify that to our stated assumption \eqref{eq:uniformly integrand Thm}.

Uniform integrability of $L$ implies integrability. So we can subtract
our chain of equations for $C_{k}^{+}$ and $C_{k}^{-}$, i.e., $\hw_{\widetilde{\omega\vert n}}^{k,+}-\hw_{\widetilde{\omega\vert n}}^{k,-}$.
Then we draw the limit $\Rvar\ra0$ into the integrals. We have proved,
assuming the limit inside the double integral exists:\begin{multline}
\lim_{\Rvar\ra0}\frac{1}{\ln\frac{\Rstart}{\Rvar}}\int_{\Rvar}^{\Rstart}\int_{\op{nor}\widetilde{\mJ_{\ze}}}f\left(\ex z\right)\,\ze^{\zD-k}dC_{k}\left(\mJ_{\ze},\ex z\right)\frac{d\ze}{\ze}\,=\,\int_{\mIN\times\mJ}\int_{0}^{\infty}\lim_{\Rvar\ra0}\left[\frac{\hr_{\chunkh{\max\left\{ \R\left(x,0\right),\Rvar\right\} }{\Rstart}}^{k,\pm}\left(x,\ze,f\right)}{\ln\frac{\Rstart}{\Rvar}\icdens\left(x\right)}\right.\\
\left.+\frac{1}{\ln\frac{\Rstart}{\Rvar}}\sum_{n\in\mathbb{N}}1\left\{ \zr{\widetilde{\omega\vert n}}u>\frac{\Rvar}{\ze}\right\} \hw_{\widetilde{\omega\vert n}}^{k}\left(x,\ze,\frac{f\circ\mf{\widetilde{\omega\vert n}}}{\icdens\left(\mf{\widetilde{\omega\vert n}}x\right)}\right)\right]\frac{d\ze}{\ze}d\mmbi\left(\omega,x\right).\label{eq:now limit inside integral}\end{multline}
 The rest of this proof will show the integrand limit \eqref{eq:now limit inside integral}
exists. Clearly, $\hr_{\dots}^{k,\pm}$ can be ignored, \[
\lim_{\Rvar\ra0}\left(\ln\frac{\Rstart}{\Rvar}\right)^{-1}\frac{\hr_{\chunkh{\max\left\{ \R\left(x,0\right),\Rvar\right\} }{\Rstart}}^{k}\left(x,\ze,f\right)}{\icdens\left(x\right)}=0.\]

The Markov time $N\left(\ze/\Rvar\right)\assign\max\left\{ n\in\mathbb{N}\,:\,\zr{\widetilde{\omega\vert n}}u>\frac{\Rvar}{\ze}\right\} $
will help reinterpret the sum as Cesaro average. The integrand of
\eqref{eq:now limit inside integral} becomes \[
\lim_{\Rvar\ra0}\left[0+\frac{N\left(\ze/\Rvar\right)}{\ln\Rstart/\Rvar}\frac{1}{N\left(\frac{\ze}{\Rvar}\right)}\sum_{n\leq N\left(\frac{\ze}{\Rvar}\right)}\hw_{\widetilde{\omega\vert n}}^{k}\left(x,\ze,\frac{f\circ\mf{\widetilde{\omega\vert n}}}{\icdens\left(\mf{\widetilde{\omega\vert n}}x\right)}\right)\right].\]
 Lemma \ref{lem:Birkhoff contraction ratio} below allows us to replace
the following subexpression with its limit, \[
\frac{N\left(\ze/\Rvar\right)}{\ln\Rstart/\Rvar}\,\underset{\Rvar\ra0}{\longrightarrow}\,\frac{\zD}{\entropy}.\]
 That eliminated the only $\Rvar$ outside $N\left(\ze/\Rvar\right)$.
Recall $\zr{\widetilde{\omega\vert n}}u\leq\rmax^{n}$, $\rmax<1$.
Thus $\Rvar\ra0$ implies $N\left(\ze/\Rvar\right)\ra\infty$. We
replace $\lim_{\Rvar\ra0}$ with $\lim_{N\ra\infty}$. Inside the
integrals in \eqref{eq:now limit inside integral}, that leaves \[
0+\frac{\zD}{\entropy}\lim_{N\ra\infty}\frac{1}{N}\sum_{n\leq N}\hw_{\widetilde{\omega\vert n}}^{k}\left(x,\ze,\frac{f\circ\mf{\widetilde{\omega\vert n}}}{\icdens\left(\mf{\widetilde{\omega\vert n}}x\right)}\right).\]

If $\alpha_{n}$ is a weakly converging sequence of signed measures
and $b_{n}$ a sequence of equicontinuous functions, then the Cesaro
average $\frac{1}{N}\sum_{n\leq N}\alpha_{\infty}\left(b_{n}\right)$
has the same limiting behavior as $\frac{1}{N}\sum_{n\leq N}\alpha_{n}\left(b_{n}\right)$.
In other words, any converging measure $\alpha_{n}$ may be replaced
by its limit $\alpha_{\infty}$, see Lemma \ref{lemresummation} below.
Let us verify the assumptions: Lemma \ref{lem:hw-converges} below
provides the geometric justification that $\alpha_{n}\left(\cdot\right)\assign\hw_{\widetilde{\omega\vert n}}^{k}\left(x,\ze,\cdot\right)$
does converge. The expression

\[
\fibreavg f{\omega}\left(\ex z\right)=\lim_{N\ra\infty}\frac{1}{N}\sum_{n\leq N}\frac{f\circ\mf{\widetilde{\omega\vert n}}\left(\ex z\right)}{\icdens\left(\mf{\widetilde{\omega\vert n}}x\right)}\]
 will be $\lim\frac{1}{N}\sum_{n\leq N}b_{n}$. It converges uniformly
in $\ex z\in\overline{\bV}\times S^{d-1}$ for almost all $\left(\omega,x\right)$:
Lemma \ref{lem:Birkhoff on test function} below provides the reduction
to the individual ergodic theorem (and a formula). The family of all
$\mfsym$-translates of $\underline{z}\mapsto f\left(\underline{z}\right)$
is equicontinuous. So we may replace $\hw_{\widetilde{\omega\vert n}}^{k}$
with $\hw_{\widetilde{\omega}}^{k}$ in \eqref{eq:now limit inside integral}.
We have achieved our goal to show the integrand limit exists: \[
\lim_{\Rvar\ra0}\frac{\hr_{\chunkh{\dots}{\Rstart}}}{\ln\frac{\Rstart}{\Rvar}\icdens}+\frac{1}{N}\frac{N}{\ln\Rstart/\Rvar}\sum_{n\leq N}\hw_{\widetilde{\omega\vert n}}^{k}\left(x,\ze,\frac{f\circ\mf{\widetilde{\omega\vert n}}}{\icdens\left(\mf{\widetilde{\omega\vert n}}x\right)}\right)=\frac{\zD}{\entropy}\hw_{\widetilde{\omega}}^{k}\left(x,\ze,\fibreavg f{\omega}\right).\]
\end{proof}
\begin{rem}
\label{rem:sharp}A converse shows the uniform integrability condition
\eqref{eq:uniformly integrand-1} likely cannot be improved. Without
it, the same proof yields for continuous $f\geq0$, \[
\liminf_{\Rvar\ra0}\frac{1}{\ln\frac{\Rstart}{\Rvar}}\int_{\mJ}\int_{0}^{\infty}\left\vert \hr_{\chunkh{\Rvar}{\Rstart}}^{k}\left(x,\ze,f\right)\right\vert \frac{d\ze}{\ze}d\mmc\left(x\right)\geq\frac{\zD}{\entropy}\int_{\mIN\times\mJ}\int_{0}^{\infty}\left\vert \hw_{\widetilde{\omega}}^{k}\left(x,\ze,\fibreavg f{\omega}\right)\right\vert \,\frac{d\ze}{\ze}d\mmbi\left(\omega,x\right).\]
Both equality holds and the lower limit exists as a proper, finite
limit, if and only if \eqref{eq:uniformly integrand-1} after replacing
$\hr^{\var}$ by $\left|\hr\right|$ and $\hw^{\var}$ by $\left|\hw\right|$
is uniformly integrable. This follows from a converse to Fatou's lemma
(\cite[Chapters 6.8 and 6.18]{Doob94MeasureTheoryMR1253752}).

Assume we may replace $C_{k}$ with $C_{k}^{\pm}$ in the continuity
assertion of Fact \ref{fac:continuity of curvature} below when proving
\eqref{eq:def hw plim} (i.e. $\lim_{n}\alpha_{n}$). Then the last
inequality improves to \begin{equation}
\liminf_{\Rvar\ra0}\frac{1}{\ln\frac{\Rstart}{\Rvar}}\int\limits _{\Rvar}^{\Rstart}\ze^{\zD-k}\int\limits _{\op{nor}\widetilde{\mJ_{\ze}}}f\left(\ex z\right)\, dC_{k}^{\pm}\left(\mJ_{\ze},\ex z\right)\frac{d\ze}{\ze}\geq\frac{\zD}{\entropy}\int\limits _{\mIN\times\mJ}\int\limits _{0}^{\infty}\hw_{\widetilde{\omega}}^{k,\pm}\left(x,\ze,\fibreavg f{\omega}\right)\,\frac{d\ze}{\ze}d\mmbi\left(\omega,x\right).\label{eq:limit variation measures}\end{equation}
Both equality holds and the lower limit exists as a proper, finite
limit, if and only if \eqref{eq:uniformly integrand-1} after replacing
$\hr^{\var}$ by $\hr^{\pm}$ and $\hw^{\var}$ by $\hw^{\pm}$ is
uniformly integrable. (This was proved for self-similar $\mJ$ in
\cite[Proposition 3.12]{BohlZaehle12CurvatureDirectionMeasures1111.4457}.)\end{rem}
\begin{proof}
(of Theorem \ref{thm:Main result}) Lemmas \ref{lem:Zygmund space condition for uniform integrability},
\ref{lem:sufficient conditions uniform integrability} derive uniform
integrability with $u$ \eqref{eq:uniformly integrand Thm} from the
other conditions with $u$ in Assumption \ref{ass:(Uniform-integrability)}.
Then Proposition \ref{pro:(core of proof)} above guarantees convergence
to the first limit formula (with $u$). In a second run, we replace
the constant $u$ by the integration variable $x$ in all the proofs
(except in Lemma \ref{lem:ergodic-dynamical-system}, where it plays
a different role). Again, the other conditions without $u$ imply
uniform integrability without $u$ and convergence to the second formula
(without $u$). It remains to show both formulas are equal.

Write $\hw_{\widetilde{\omega}}^{k,u}$ for $\hw_{\widetilde{\omega}}^{k}$,
and $\hw_{\widetilde{\omega}}^{k,x}$ when all instances of $u$ in
its definition \eqref{eq:DEF hw limiting word} are replaced with
$x$. This is the second limit formula's integrand in the theorem.
Define $\hw_{\widetilde{\omega|m}}^{k,u}$, $\hw_{\widetilde{\omega|m}}^{k,x}$
accordingly for finite words $\widetilde{\omega|m}$. Once as stated
and once with $u$ replaced, \eqref{eq:hw transforms-to hi chunk-of-curvdens}
implies \[
\hw_{\widetilde{\omega|m}}^{k,u}\left(x,\ze,f\right)=\hr_{\left(\dots\right]}^{k}\left(\mf{\widetilde{\omega|m}}x,\ze\zr{\widetilde{\omega|m}}u,f\circ\mf{\widetilde{\omega|m}}^{-1}\right)=\hw_{\widetilde{\omega|m}}^{k,x}\left(x,\ze/\zrp{\widetilde{\omega|m}}ux,f\right).\]
Then as $m\ra\infty$, Lemma \ref{lem:hw-converges} gives $\hw_{\widetilde{\omega}}^{k,u}\left(x,\ze,f\right)=\hw_{\widetilde{\omega}}^{k,x}\left(x,\ze/\zrp{\widetilde{\omega}}ux,f\right)$.
This and the coordinate transformation $\left(\omega,x,\ze\right)\mapsto\left(\omega,x,\ze/\zrp{\widetilde{\omega}}ux\right)$
turn both limit formulas into each other.
\end{proof}
The point $\left(\eta,u\right)$, the Brin group $\G_{\eta u}$, and
$\fibreavg f{\omega}$ were defined in \eqref{eq: DEF Brin group}.
Recall \[
\fibreavg f{\omega}\left(z,n\right)\assign\int\limits _{\mIN\times\mJ}\int\limits _{\G_{\eta u}}f\left(\hat{y},\left(\bplim{\eta}u^{\prime}\hat{y}\right)^{\orth-1}\hat{g}\left(\bplim{\omega}u^{\prime}z\right)^{\orth}n\right)\, d\HG\left(\hat{g}\right)d\mmc\left(\hat{y}\right)\]
 does not depend on $\left(\eta,u\right)$. 

To readers not interested in directed curvature, the next lemma merely
recalls Birkhoff's theorem.

Although the dynamical system map $\mf{\widetilde{\omega\vert m}}$
is contractive in $\overline{\bV}$, the limit $\fibreavg f{\omega}$
can depend on $z\in\overline{\bV}$ via the orbit of $n\in S^{d-1}$.
\begin{lem}
\label{lem:Birkhoff on test function}For almost all $\left(\omega,x\right)$,
the expression \begin{equation}
\lim_{M\ra\infty}\frac{1}{M}\sum_{m\leq M}\frac{f\circ\mf{\widetilde{\omega\vert m}}\left(z,n\right)}{\icdens\left(\mf{\widetilde{\omega\vert m}}x\right)}=\fibreavg f{\omega}\left(z,n\right),\label{eq:converge to finvar translate}\end{equation}
 converges uniformly in $\left(z,n\right)$, i.e. w.r.t the norm on
$\overline{\bV}\times S^{d-1}$. It can be interpreted as the $\ex{\mmbi}$-conditional
expectation of the function $a_{\left(\omega,x,z,n\right)}$ on $\ex{\msb}$-orbits
at the point $\left(\omega,x,\id\right)$, where \[
a_{\left(\tau,y,z,n\right)}\left(\omega,x,g\right)\assign\icdens\left(x\right)^{-1}f\left(x,g\left(\bplim{\tau}y^{\prime}z\right)^{\orth}n\right).\]
The limit simplifies to a constant in case $f$ does not depend on
the normal variable $n\in S^{d-1}$, \[
\fibreavg f{\omega}\left(z,n\right)=\int f\, d\mmc.\]
 \end{lem}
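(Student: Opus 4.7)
The plan is to identify the Cesaro sum as a backward Birkhoff average for the group-extended shift on $(\mIN\times\mJ\times O(d), \ex{\mmbi})$. Inverting the defining formula \eqref{eq:DEF group extended shift} yields
\[
\ex{\msb}^{-m}(\omega, x, \id) = \bigl(\ms^m\omega,\,\mf{\widetilde{\omega|m}}x,\,(\mf{\widetilde{\omega|m}}'x)^{\orth}\bigr),
\]
so evaluating $a_{(\omega,x,z,n)}$ at this point returns
\[
\icdens(\mf{\widetilde{\omega|m}}x)^{-1}\, f\bigl(\mf{\widetilde{\omega|m}}x,\,(\mf{\widetilde{\omega|m}}'x)^{\orth}(\bplim{\omega}x'z)^{\orth}n\bigr).
\]
Birkhoff's theorem applied to the measure-preserving map $\ex{\msb}^{-1}$ then delivers a.e.\ convergence of the Cesaro averages of this quantity to the conditional expectation $\mathbb{E}[a_{(\omega,x,z,n)}\mid\mathcal{I}_{\ex{\msb}}](\omega, x, \id)$.

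Next I would compare the Birkhoff sum with the sum stated in the lemma. The chain rule gives $(\mf{\widetilde{\omega|m}}'z)^{\orth} = (\mf{\widetilde{\omega|m}}'x)^{\orth}(\bp{\widetilde{\omega|m}}x'z)^{\orth}$. Together with the contractivity bound $|\mf{\widetilde{\omega|m}}z - \mf{\widetilde{\omega|m}}x| \leq \rmax^m\diam\bV$ and the exponential convergence $(\bp{\widetilde{\omega|m}}x'z)^{\orth} \to (\bplim{\omega}x'z)^{\orth}$ supplied by Proposition \ref{pro:(Distortion-converges)}, uniform continuity of $f$ on $\overline{\bV}\times S^{d-1}$ shows that the differences of corresponding summands vanish uniformly in $(z, n)$ as $m \to \infty$. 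Since Cesaro averaging preserves uniform vanishing, the sum in the lemma has the same limit as the Birkhoff sum.

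To identify this limit with $\fibreavg f\omega(z,n)$, I would invoke that the Rokhlin cocycle in \eqref{eq:DEF group extended shift} has essential image $\G_{\eta u}$ (up to conjugation), so by the Brin ergodicity construction the ergodic component of $(\mIN\times\mJ\times O(d), \ex{\mmbi}, \ex{\msb})$ through $(\omega, x, \id)$ is $\mIN\times\mJ\times\G_{\eta u}$. After the prefactor $\icdens^{-1}$ in $a$ cancels the density $\icdens = d\mmi/d\mmc$ in the $\mJ$-marginal, the conditional expectation reduces to
\[
\int_\mJ\int_{\G_{\eta u}} f\bigl(\hat y,\,\hat g(\bplim{\omega}x'z)^{\orth}n\bigr)\,d\HG(\hat g)\,d\mmc(\hat y),
\]
which equals $\fibreavg f\omega(z, n)$ by the $(\eta, u)$-independence of the latter, applied with reference pair $(\omega, x)$ in \eqref{eq:DEF f limit}.

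The main obstacle is upgrading Birkhoff's pointwise a.e.\ convergence in $(z, n)$ to convergence uniform in $(z, n)$. I would do this by running the a.e.\ Birkhoff argument on a countable dense subset of $\overline{\bV}\times S^{d-1}$, and then invoking equicontinuity of the family $\{(z, n) \mapsto f(\mf{\widetilde{\omega|m}}z, (\mf{\widetilde{\omega|m}}'z)^{\orth}n)\}_m$. This equicontinuity follows from the uniform Lipschitz estimates $|\mf{\widetilde{\omega|m}}z - \mf{\widetilde{\omega|m}}z'| \leq \rmax^m |z-z'|$ and uniform continuity of $f$ and of the orthogonal rotations on the compact domain. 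When $f$ is independent of $n \in S^{d-1}$, the $O(d)$ extension is superfluous, and Birkhoff applied directly to the ergodic system $(\mIN\times\mJ, \msb, \mmbi)$ delivers the constant limit $\int_\mJ (f/\icdens)\,d\mmi = \int_\mJ f\,d\mmc$.
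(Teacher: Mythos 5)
Your overall strategy mirrors the paper's (approximate the summand by a co-cycle pullback, apply Birkhoff to the $O(d)$-extended shift, upgrade via equicontinuity), but there are two genuine gaps.

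First, a circularity in the Birkhoff step: you apply Birkhoff to the function $a_{\left(\omega,x,z,n\right)}$ and evaluate at $\left(\omega,x,\id\right)$, so the auxiliary parameter $\left(\tau,y\right)$ inside $a$ coincides with the evaluation point. Birkhoff gives an a.e.\ statement for each \emph{fixed} function, and its exceptional null set depends on $\left(\tau,y,z,n\right)$; nothing lets you then diagonalize by setting $\left(\tau,y\right)=\left(\omega,x\right)$. Your countable dense set is taken only inside $\overline{\bV}\times S^{d-1}$, i.e.\ over $\left(z,n\right)$, so it does not repair this. The paper's proof runs Birkhoff over a countable dense set of the \emph{full} parameter quadruple $\left(\tau,y,z,n\right)\in\mIN\times\mJ\times\overline{\bV}\times S^{d-1}$ and only afterwards, via equicontinuity of \eqref{eq: Birkhoff f asymptotics} in $\left(\tau,y,z,n,g\right)$ and Arzela--Ascoli, passes to $\left(\tau,y\right)=\left(\omega,x\right)$, $g=\id$.

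Second, your closed form for the ergodic fibre average drops the skewing factor. You assert the ergodic component through $\left(\omega,x,\id\right)$ is the plain product $\mIN\times\mJ\times\G_{\eta u}$ and write the conditional expectation as $\int_{\mJ}\int_{\G_{\eta u}} f\bigl(\hat y,\,\hat g\left(\bplim{\omega}x^{\prime}z\right)^{\orth}n\bigr)\,d\HG\,d\mmc$. But Lemma~\ref{lem:ergodic-dynamical-system} shows the fibres are skewed: the correct integrand has the normal direction $\left(\bplim{\omega}x^{\prime}\hat y\right)^{\orth-1}\hat g\left(\bplim{\omega}x^{\prime}z\right)^{\orth}n$, with the extra $\left(\bplim{\omega}x^{\prime}\hat y\right)^{\orth-1}$ coming from the coordinate change $\alpha$ that reduces the extension. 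This factor depends on the integration variable $\hat y$ and is not removable by any choice of reference $\left(\eta,u\right)$. Your formula therefore does not agree with $\fibreavg f{\omega}$ as defined in \eqref{eq:DEF f limit}. The non-directional case $\fibreavg f{\omega}=\int f\,d\mmc$ is handled correctly.
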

\begin{proof}
Recall the extended dynamical system $\left(\mIN\times\mJ\times O\left(d\right),\ex{\msb},\ex{\mmbi}\right)$,
$\ex{\mmbi}=\mmbi\otimes\mathcal{H}_{O\left(d\right)}$, \[
\ex{\msb}^{-m}\left(\omega,x,g\right)=\left(\ms^{m}\omega,\mf{\widetilde{\omega|m}}x,\left(\mf{\widetilde{\omega|m}}^{\prime}x\right)^{\orth}g\right).\]

We will rewrite $f\circ\mf{\widetilde{\omega\vert m}}$ asymptotically
in terms of this system. The base point $z$ can be moved to $x$
at the price of an additional $\bpsym$ (defined in \eqref{eq:def psi}):
\begin{eqnarray*}
f\circ\mf{\widetilde{\omega\vert m}}\left(z,n\right) & = & f\left(\mf{\widetilde{\omega\vert m}}z,\left(\mf{\widetilde{\omega\vert m}}^{\prime}z\right)^{\orth}n\right)\\
 & = & f\left(\mf{\widetilde{\omega\vert m}}z,\left(\mf{\widetilde{\omega\vert m}}^{\prime}x\right)^{\orth}\left(\bp{\widetilde{\omega\vert m}}x^{\prime}z\right)^{\orth}n\right).\end{eqnarray*}
The asymptotic approximation of $\frac{f}{\icdens}\circ\mf{\widetilde{\omega\vert m}}$
will be later obtained from \begin{equation}
a_{\left(\tau,y,z,n\right)}\circ\underline{\msb}^{-m}\left(\omega,x,g\right)=\frac{f\left(\mf{\widetilde{\omega|m}}x,\left(\mf{\widetilde{\omega|m}}^{\prime}x\right)^{\orth}g\left(\bplim{\tau}y^{\prime}z\right)^{\orth}n\right)}{\icdens\left(\mf{\widetilde{\omega|m}}x\right)}\label{eq: aux a transformed}\end{equation}
 by setting $g\assign\id$, $\left(\tau,y\right)\assign\left(\omega,x\right)$.
Indeed, the estimates $\left|\mf{\widetilde{\omega|m}}x-\mf{\widetilde{\omega|m}}z\right|\leq\rmax^{m}\diam\bV$
and $\left|\left(\bplim{\omega}x^{\prime}z\right)-\left(\bp{\widetilde{\omega|m}}x^{\prime}z\right)\right|\leq\rmax^{m\holder}c_{\bpsym}$
depend only on $m$. Since $f$ is uniformly continuous, this implies
uniform convergence \begin{equation}
\left|\frac{1}{M}\sum_{m\leq M}\left(\frac{f\circ\mf{\widetilde{\omega\vert m}}\left(z,n\right)}{\icdens\left(\mf{\widetilde{\omega\vert m}}x\right)}-a_{\left(\omega,x,z,n\right)}\circ\underline{\msb}^{-m}\left(\omega,x,\id\right)\right)\right|\underset{M\ra\infty}{\longrightarrow}0,\label{eq: f asymptotics}\end{equation}
 i.e., at a rate that does not depend on $\omega,x,z,n$. 

Next, we will apply the Birkhoff ergodic theorem to $a$. Let $\mathcal{D}$
be a countable, dense set of $\left(\tau,y,z,n\right)\in\mIN\times\mJ\times\overline{\bV}\times S^{d-1}$.
On each element of $\mathcal{D}$, the limit \begin{equation}
\lim_{M\ra\infty}\frac{1}{M}\sum_{m\leq M}a_{\left(\tau,y,z,n\right)}\circ\underline{\msb}^{-m}\left(\omega,x,g\right)\label{eq: Birkhoff f asymptotics}\end{equation}
 exists for almost every $\left(\omega,x,g\right)$. Lemma \ref{lem:ergodic-dynamical-system}
provides a formula (a $\ex{\mmbi}$-conditional expectation of $a_{\left(\tau,y,z,n\right)}$
on $\underline{\msb}$-invariant sets). Once we know it converges
uniformly in $\left(\tau,y,z,n\right)$ and $g$ at each fixed $\left(\omega,x\right)$,
we will be allowed to set $\left(\tau,y\right)=\left(\omega,x\right)$
and $g=\id$. Then \eqref{eq: f asymptotics} and \eqref{eq: Birkhoff f asymptotics}
will yield the assertion.

The auxiliary function \[
\left(\tau,y,z,n,g\right)\mapsto g\left(\bplim{\tau}y^{\prime}z\right)^{\orth}n\]
 is uniformly continuous, see Proposition \ref{pro:(Distortion-converges)}
\eqref{enu:pro:(Distortion-converges) 1 Both--converge} for $\bplim{\tau}y^{\prime}z$.
We insert this into the explicit formulas \eqref{eq: aux a transformed}
for $a\circ\underline{\msb}^{-m}$ and \eqref{eq:DEF f limit} for
$\fibreavg f{\omega}$. So both \eqref{eq: Birkhoff f asymptotics}
and $\fibreavg f{\omega}$ are uniformly equicontinuous w.r.t. $\left(\tau,y,z,n\right)$
and $g$ at each fixed $\left(\omega,x\right)$. Equality extends
to the closures $\left(\tau,y,z,n\right)\in\overline{\mathcal{D}}$,
$g\in O\left(d\right)$. The Arzela-Ascoli theorem improves point-wise
convergence on $\overline{\mathcal{D}}\times O\left(d\right)$ to
the uniform convergence used above. We have shown \begin{multline*}
\frac{1}{M}\sum_{m\leq M}\frac{f\circ\mf{\widetilde{\omega\vert m}}\left(z,n\right)}{\icdens\left(\mf{\widetilde{\omega\vert m}}x\right)}\underset{M}{\longrightarrow}\\
\int\limits _{\mIN\times\mJ}\int\limits _{\G_{\eta u}}a_{\left(\omega,x,z,n\right)}\left(\hat{\vartheta},\hat{y},\left(\bplim{\eta}u^{\prime}\hat{y}\right)^{\orth-1}\hat{h}\left(\bplim{\eta}u^{\prime}x\right)^{\orth}\id\right)\, d\HG\left(\hat{h}\right)d\mmbi\left(\hat{\vartheta},\hat{y}\right).\end{multline*}

Finally, $\icdens\left(\mf{\widetilde{\omega\vert m}}x\right)^{-1}-\icdens\left(\mf{\widetilde{\omega\vert m}}u\right)^{-1}$
asymptotically vanishes for any $x,u\in\bV$ because $\mf{\widetilde{\omega\vert m}}$
contracts and $\icdens$ is Hölder continuous. So the left side the
same for almost all values of $x$. Since the right side is continuous,
it does not depend on the choice of $x$ at all. Set $x\assign u$
and simplify $\bplim{\eta}u^{\prime}u=\id$, $\mmbi=\mmi$ lacking
$\hat{\vartheta}$, $\mmi/\icdens=\mmc$. 
\end{proof}
Recall \eqref{eq:DEF H is entropy}: $\entropy\assign-\zD\int_{\mJ}\ln\zr{x_{1}}{\ms x}\, d\mmi\left(x\right)$.
\begin{lem}
\label{lem:Birkhoff contraction ratio} Writing \[
N\left(\omega,u,\frac{\Rvar}{\ze}\right)\assign\max\left\{ n\in\mathbb{N}\,:\,\zr{\widetilde{\omega\vert n}}u>\frac{\Rvar}{\ze}\right\} ,\]
we have \[
\lim_{\Rvar\ra0}\frac{\ln\Rstart/\Rvar}{N\left(\omega,u,\Rvar/\ze\right)}=\frac{\entropy}{\zD}\]
 for $\mmbi$-almost all $\omega$ and all $u\in\mJ$.\end{lem}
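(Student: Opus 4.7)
The plan is to recognize $\ln\zr{\widetilde{\omega|n}}u$ as a Birkhoff ergodic sum along the backward orbit of $(\omega,u)$ in the invertible ergodic system $(\mIN\times\mJ,\msb,\mmbi)$. The chain rule applied to $\mf{\widetilde{\omega|n}}=\mf{\omega_n}\circ\dots\circ\mf{\omega_1}$ yields
\[
\ln\zr{\widetilde{\omega|n}}u \;=\; \sum_{j=1}^{n}\ln\bigl|\mf{\omega_j}'\bigl(\mf{\widetilde{\omega|(j-1)}}u\bigr)\bigr|.
\]
A direct computation gives $\msb^{-1}(\omega,x)=(\ms\omega,\mf{\omega_1}x)$, hence $\msb^{-(j-1)}(\omega,u)=(\ms^{j-1}\omega,\mf{\widetilde{\omega|(j-1)}}u)$. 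Setting $f(\omega,x)\assign\ln|\mf{\omega_1}'(x)|$, the sum becomes $\sum_{k=0}^{n-1}f\circ\msb^{-k}(\omega,u)$. Since $f$ is continuous and $|\mf i'|$ is bounded away from $0$ and $\infty$ on the compact set where it matters, $f\in L^{1}(\mmbi)$.

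Next I would apply Birkhoff's theorem to the ergodic, $\mmbi$-preserving transformation $\msb^{-1}$ and compute the spatial mean. By $\msb$-invariance, $\int f\,d\mmbi = \int f\circ\msb\,d\mmbi$, and because $(f\circ\msb)(\omega,x)=\ln|\mf{x_1}'(\ms x)|$ depends only on $x$, this reduces via the marginal $\mmi$ of $\mmbi$ to
\[
\int_{\mJ}\ln|\mf{x_1}'(\ms x)|\,d\mmi(x) \;=\; -\frac{\entropy}{\zD}
\]
by the definition \eqref{eq:DEF H is entropy}. Thus for $\mmbi$-a.e.\ $(\omega,u)$ one has $\tfrac1n\ln\zr{\widetilde{\omega|n}}u\to-\entropy/\zD$.

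To pass from "almost every $u$" to "every $u\in\mJ$", I would invoke bounded distortion \eqref{eq:K bounded distortion}: $|\ln\zr{\widetilde{\omega|n}}u-\ln\zr{\widetilde{\omega|n}}v|\leq\ln\mK$ independently of $n$, so this discrepancy is swallowed after dividing by $n$. By Fubini, for $\mmbi$-a.e.\ $\omega$ there exists some reference point $v\in\mJ$ for which Birkhoff's limit holds, and bounded distortion then propagates the limit uniformly to every $u\in\mJ$.

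Finally, I would convert the $n$-asymptotic into the claimed $\Rvar$-asymptotic of $N=N(\omega,u,\Rvar/\ze)$. By the defining extremal property of $N$, $\zr{\widetilde{\omega|N}}u>\Rvar/\ze\geq\zr{\widetilde{\omega|(N+1)}}u$; since $\zr{\widetilde{\omega|n}}u\leq\rmax^{n}\to0$, we have $N\to\infty$ as $\Rvar\to0$. Taking logarithms and sandwiching with $\tfrac{1}{n}\ln\zr{\widetilde{\omega|n}}u\to-\entropy/\zD$ gives $\ln(\ze/\Rvar)/N\to\entropy/\zD$, and the asserted limit follows because $\ln(\Rstart/\Rvar)-\ln(\ze/\Rvar)=\ln(\Rstart/\ze)$ is a fixed constant divided by $N\to\infty$. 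The proof has no substantive obstacle; the only care needed is the bookkeeping that reverses the word $\widetilde{\omega|n}$ into a forward orbit of $\msb^{-1}$ and the one-line integral reduction identifying $\int f\,d\mmbi$ with $-\entropy/\zD$.
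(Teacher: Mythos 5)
Your proof is correct, and it follows the same line of argument that the paper outsources to the cited references: recognize $\ln\zr{\widetilde{\omega|n}}u$ as a Birkhoff sum of $f(\omega,x)=\ln|\mf{\omega_1}'(x)|$ along the $\msb^{-1}$-orbit of $(\omega,u)$, apply the ergodic theorem to the two-sided system, identify the mean, use bounded distortion to pass from a.e.\ $u$ to all $u$, and sandwich $N$ against the decreasing sequence $\zr{\widetilde{\omega|n}}u$. The paper simply points to the first half of Patzschke's Lemma 4.7 and to Przytycki-Urba\'nski for the ``entropy interpretation,'' whereas you give a clean self-contained derivation. One small simplification in your version worth noting: since the paper \emph{defines} $\entropy$ by \eqref{eq:DEF H is entropy} as exactly $-\zD\int_\mJ\ln\zr{x_1}{\ms x}\,d\mmi$, the identification $\int f\,d\mmbi=-\entropy/\zD$ is immediate from $\msb$-invariance and the marginal of $\mmbi$ on $\mJ$ being $\mmi$; the cited ``entropy interpretation'' theorem is only relevant if one also wants to recognize $\entropy$ as the Kolmogorov--Sinai entropy, which the lemma does not actually require. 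All the supporting facts you invoke (chain rule decomposition, $\msb^{-(j-1)}(\omega,u)=(\ms^{j-1}\omega,\mf{\widetilde{\omega|(j-1)}}u)$, strict monotonicity of $n\mapsto\zr{\widetilde{\omega|n}}u$ from $|\mf_i'|\le\rmax<1$, integrability of $f$ on the compact domain, ergodicity of $\mmbi$ under the inverse shift, and bounded distortion \eqref{eq:K bounded distortion}) check out.
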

\begin{proof}
See the first half of the proof of \cite[Lemma 4.7]{Patzschke04tangentMeasureSelfconformal}
for convergence and identification of the limit, and e.g. \cite[Thm. 1.9.7]{PrzytyckiUrbanski10ConformalMR2656475}
for the entropy interpretation. Bounded distortion extends it to all
$u\in\mJ$.
\end{proof}

\subsection{\label{sub:Time-averaged-distance,-distortion,}Time-averaged distance,
distortion, and ergodic fibres}

The iterated function system maps $\mJ$ onto some smaller building
block $\mf{\omega|n}\mJ$ of the fractal under the non-linear contraction
$\mf{\omega_{n}}\circ\dots\circ\mf{\omega_{1}}=\mf{\omega|n}$. But
curvature and parallel sets transform nicely under a similarity $\mf{\omega|n}^{\prime}x$.
\comment{But usually, $\mf i\left(\mJ_{\ze}\right)$ does not agree
locally with any (thinner) parallel set of $\mf i\mJ$. Parts of $\mf i\left(\mJ_{\ze}\right)$
are coated more thickly, parts less so.} To compensate, the (non-contracting)
distortion $\bp{\omega|n}x$ magnifies by one and contracts by the
other map. It usually does not converges as $n\ra\infty$ for an infinite
sequence $\omega\in\mIN$. This is why curvature densities in the
sense of \cite{RatajZaehle10CurvatureDensitiesSelfSimilarSets,BohlZaehle12CurvatureDirectionMeasures1111.4457}
do not exist. Following \cite{Zaehle01AverageDensitySelfconformalMR1825985},
the solution is to reverse the sub-words, $\widetilde{\omega|n}\assign\omega_{n}\omega_{n-1}\dots\omega_{1}$.
The two-sided right shift dynamical system inverts the sub-words:
$\msb^{-n}\left(\omega,x\right)=\left(\ms^{n}\omega,\mf{\widetilde{\omega|n}}x\right)$.
Hölder techniques will show $\bp{\widetilde{\omega|n}}x$ does converge.
Our $\lim_{n}\left|\bp{\widetilde{\omega|n}}xy\right|$ is equal to
Zähle's time-averaged distance function $\op{dist}_{\omega}\left(x,y\right)$. 

It is well-known the two-sided shift dynamical system $\mIN\times\mJ$
has an ergodic Gibbs measure $\mmbi$. But the directional nature
of our curvature requires a further extension by the orthogonal group.
We provide an explicit formula for its ergodic fibre decomposition.
A key insight, the distortion derivative $\bpsym^{\prime}$ arises
as a skewing factor, by which the fibres differ from a product measure.
(Readers interested in curvature on $\Rd$ without the normals can
ignore $\bpsym^{\prime}$ and Lemma \ref{lem:ergodic-dynamical-system}.) 

Recall \eqref{eq:def psi}, \begin{eqnarray*}
\bp{\widetilde{\omega|n}}xy & \assign & \left(\mf{\widetilde{\omega|n}}^{\prime}x\right)^{-1}\left(\mf{\widetilde{\omega|n}}y-\mf{\widetilde{\omega|n}}x\right),\end{eqnarray*}

Notice \begin{equation}
\mK^{-1}\leq\zrp{\tau}xy=\frac{\zr{\tau}y}{\zr{\tau}x}\leq\mK\label{eq: abs psi prime}\end{equation}
 and $\bp{\omega|n}xy=\left(\bp{\omega|n}u^{\prime}x\right)^{-1}\left(\bp{\omega|n}uy-\bp{\omega|n}ux\right)$.
\begin{fact}
\label{fac:rate of convergence}A sequence $a_{n}$ is said to convergence
at the rate $c_{\bpsym}s^{n}$ if $\sup_{k}\left|a_{n}-a_{n+k}\right|\leq c_{\bpsym}s^{n}$
for all $n$. Note that if $a_{n}$ and $b_{n}$ converge at the rates
$c_{a}s^{n}$ and $c_{b}s^{n}$, then the sequences $a_{n}b_{n}$
and $1/a_{n}$ converge at the rates $c_{ab}s^{n}$ and $c_{1/a}s^{n}$
respectively, with $c_{ab}\leq\left|a_{1}\right|c_{b}+\left|b_{1}\right|c_{a}+2c_{a}c_{b}s$
and $c_{1/a}\leq c_{a}/\inf_{n}\left|a_{n}\right|^{2}$. \end{fact}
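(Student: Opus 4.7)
The plan is to telescope and use only elementary estimates. The key preliminary observation is that the Cauchy-type convergence rate immediately forces a uniform bound $|a_m| \leq |a_1| + c_a s$ for every $m \geq 1$: apply the defining inequality with $n = 1$ and $k = m - 1$ to get $|a_m - a_1| \leq c_a s$, and then add $|a_1|$. The analogous bound $|b_m| \leq |b_1| + c_b s$ holds by the same argument.

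For the product, I would use the standard add-and-subtract identity
\[
a_n b_n - a_{n+k} b_{n+k} \;=\; a_n (b_n - b_{n+k}) \,+\, (a_n - a_{n+k})\, b_{n+k},
\]
apply the triangle inequality, and insert the convergence rates of $a_n$ and $b_n$ together with the uniform bounds on $|a_n|$ and $|b_{n+k}|$ just established. Taking the supremum over $k$ yields
\[
\sup_k |a_n b_n - a_{n+k} b_{n+k}| \;\leq\; \bigl( (|a_1| + c_a s)\, c_b \,+\, c_a\, (|b_1| + c_b s)\bigr)\, s^n,
\]
which on expansion gives exactly the stated constant $c_{ab} \leq |a_1| c_b + |b_1| c_a + 2 c_a c_b s$.

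For the reciprocal, the one-line identity
\[
\frac{1}{a_n} - \frac{1}{a_{n+k}} \;=\; \frac{a_{n+k} - a_n}{a_n\, a_{n+k}},
\]
combined with the Cauchy rate of $a_n$ in the numerator and the lower bound $|a_n a_{n+k}| \geq (\inf_m |a_m|)^2$ in the denominator, immediately gives the claimed $c_{1/a} \leq c_a / \inf_m |a_m|^2$. (Implicitly one assumes $\inf_m |a_m| > 0$, as otherwise the right-hand side is vacuous.)

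There is no genuine obstacle; this is a bookkeeping lemma that will be used later to propagate convergence rates through the products and inverses arising in the distortion analysis (the derivative $\bpsym^\prime$, ratios of $|\mfsym^\prime|$, composition of several such factors, etc.). The only subtlety worth flagging is that the uniform a priori bound on $|a_m|$ is not an extra hypothesis but is already encoded in the Cauchy-type definition of the convergence rate, so the two clauses of the Fact stand on exactly the stated assumptions.
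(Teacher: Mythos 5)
Your proof is correct, and the paper supplies no argument of its own (the statement is labeled a Fact and left unproved), so there is nothing to compare against; the add-and-subtract decomposition for the product and the one-line fraction identity for the reciprocal are the canonical route. Your preliminary observation that the Cauchy-rate condition already yields the uniform bound $|a_m|\leq|a_1|+c_a s$ (take $n=1$, $k=m-1$) is exactly the right remark to make, and plugging it into
\[
a_n b_n - a_{n+k} b_{n+k} = a_n\left(b_n - b_{n+k}\right) + \left(a_n - a_{n+k}\right) b_{n+k}
\]
produces $\left(|a_1|+c_a s\right)c_b + c_a\left(|b_1|+c_b s\right) = |a_1|c_b + |b_1|c_a + 2c_a c_b s$, matching the stated $c_{ab}$ exactly. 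The reciprocal bound via $\frac{1}{a_n}-\frac{1}{a_{n+k}} = \frac{a_{n+k}-a_n}{a_n a_{n+k}}$ and $|a_n a_{n+k}|\geq\left(\inf_m|a_m|\right)^2$ likewise gives precisely $c_{1/a}\leq c_a/\inf_m|a_m|^2$, and you correctly flag the implicit hypothesis $\inf_m|a_m|>0$. One small remark worth keeping in mind for how the Fact is actually applied: in Lemma \ref{lem:hw-converges} some of the sequences are vector- or matrix-valued rather than scalar, so the product clause is really invoked with a submultiplicative norm in place of $|\cdot|$; the bookkeeping is verbatim the same, but it is good to be aware that "$a_n b_n$" there sometimes means a bilinear pairing rather than a scalar product.
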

\begin{prop}
(Distortion converges)\label{pro:(Distortion-converges)} For all
$x,y\in\bar{\bV}$ and all codes $\omega\in\mIN$,
\begin{enumerate}
\item \label{enu:pro:(Distortion-converges) 1 Both--converge}Both \begin{eqnarray*}
\bplim{\omega}xy & \assign & \lim_{n\ra\infty}\bp{\widetilde{\omega|n}}xy,\\
\bplim{\omega}x^{\prime}y & = & \lim_{n\ra\infty}\bp{\widetilde{\omega|n}}x^{\prime}y\end{eqnarray*}
 converge at the universal rate $c_{\bpsym}\rmax^{n\holder}$. The
constant $0<c_{\bpsym}<\infty$ does not depend on $x,y,\omega$.
Thus both limiting functions are uniformly continuous in $\omega,x,y$.
(The derivative of the limit $\bplim{\omega}x$ agrees with the limit
of the derivative $\bp{\widetilde{\omega|n}}x^{\prime}$.)
\item \label{enu: pro:(Distortion-converges) 2 BDP}There is a constant
$\mK_{\bpsym}$ such that \[
\mK_{\bpsym}^{-1}\leq\frac{\left|\bp{\widetilde{\omega|n}}xy\right|}{\left|x-y\right|}\leq\mK_{\bpsym}.\]

\item \label{enu:pro:(Distortion-converges) 3 similarity}$\bp{\omega|n}x\circ\mf{\omega|n}^{-1}$
is a similarity with Lipschitz constant $\zr{\omega|n}x^{-1}$.
\item \label{enu:pro:(Distortion-converges) 4 gauge}$\bp{\widetilde{\omega|n}}xx=0$
and $\bp{\widetilde{\omega|n}}x^{\prime}x=\op{id}$.
\end{enumerate}
\end{prop}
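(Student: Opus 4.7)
I would begin by reading parts \eqref{enu:pro:(Distortion-converges) 3 similarity} and \eqref{enu:pro:(Distortion-converges) 4 gauge} directly off the definition \eqref{eq:def psi}. Substituting $w=\mf{\omega|n}y$ in $\bp{\omega|n}x(\mf{\omega|n}^{-1}w)=(\mf{\omega|n}^{\prime}x)^{-1}(w-\mf{\omega|n}x)$ exhibits $\bp{\omega|n}x\circ\mf{\omega|n}^{-1}$ as an affine map whose linear part is the inverse of the similarity $\mf{\omega|n}^{\prime}x$, hence itself a similarity with the asserted Lipschitz constant. Setting $y=x$ in \eqref{eq:def psi} gives $\bp{\widetilde{\omega|n}}xx=0$, and differentiating in $y$ yields $\bp{\widetilde{\omega|n}}x^{\prime}y=(\mf{\widetilde{\omega|n}}^{\prime}x)^{-1}\mf{\widetilde{\omega|n}}^{\prime}y$, which collapses to $\id$ at $y=x$. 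For \eqref{enu: pro:(Distortion-converges) 2 BDP} I would rewrite $|\bp{\widetilde{\omega|n}}xy|=|\mf{\widetilde{\omega|n}}y-\mf{\widetilde{\omega|n}}x|/\zr{\widetilde{\omega|n}}x$, integrate $|\mf{\widetilde{\omega|n}}^{\prime}|$ along a rectifiable path from $x$ to $y$ in $\overline{\bV}$ of controlled length, and apply bounded distortion \eqref{eq:K bounded distortion}, which sandwiches the ratio between constants proportional to $\mK^{\pm1}$ up to a $\bV$-geometric constant absorbed into $\mK_{\bpsym}$.

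The heart of the proposition is \eqref{enu:pro:(Distortion-converges) 1 Both--converge}, which I would prove by showing $\{\bp{\widetilde{\omega|n}}xy\}_{n}$ is Cauchy. Using $\mf{\widetilde{\omega|n+1}}=\mf{\omega_{n+1}}\circ\mf{\widetilde{\omega|n}}$ together with the chain rule, the consecutive difference simplifies to
\begin{equation*}
\bp{\widetilde{\omega|n+1}}xy-\bp{\widetilde{\omega|n}}xy=\bigl(\mf{\widetilde{\omega|n}}^{\prime}x\bigr)^{-1}\bigl(\mf{\omega_{n+1}}^{\prime}(\mf{\widetilde{\omega|n}}x)\bigr)^{-1}R_{n},
\end{equation*}
where $R_{n}$ is the first-order Taylor remainder of $\mf{\omega_{n+1}}$ at $\mf{\widetilde{\omega|n}}x$ evaluated at $\mf{\widetilde{\omega|n}}y$. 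The $C^{1+\holder}$ regularity of the $\mf{i}$, uniform over the finite alphabet $\mI$, yields $|R_{n}|\leq c\,|\mf{\widetilde{\omega|n}}y-\mf{\widetilde{\omega|n}}x|^{1+\holder}$. Combining this with the contraction estimate $|\mf{\widetilde{\omega|n}}y-\mf{\widetilde{\omega|n}}x|\leq\mK\zr{\widetilde{\omega|n}}x\,|y-x|\leq\mK\rmax^{n}\diam\bV$, the cancellation of one factor of $\zr{\widetilde{\omega|n}}x$, and the uniform lower bound of $|\mf{i}^{\prime}|$ coming from finiteness of $\mI$ gives $|\bp{\widetilde{\omega|n+1}}xy-\bp{\widetilde{\omega|n}}xy|\leq c^{\prime}\rmax^{n\holder}$ with $c^{\prime}$ independent of $\omega,x,y$. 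Summing the geometric tail produces the claimed universal rate $c_{\bpsym}\rmax^{n\holder}$, and uniform continuity of the limit in $(\omega,x,y)$ follows from the uniform rate combined with continuity of each finite-$n$ approximant.

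For the derivative, factoring one step of the chain rule and cancelling yields
\begin{equation*}
\bp{\widetilde{\omega|n+1}}x^{\prime}y-\bp{\widetilde{\omega|n}}x^{\prime}y=\bigl(\mf{\widetilde{\omega|n}}^{\prime}x\bigr)^{-1}\Bigl[\bigl(\mf{\omega_{n+1}}^{\prime}(\mf{\widetilde{\omega|n}}x)\bigr)^{-1}\mf{\omega_{n+1}}^{\prime}(\mf{\widetilde{\omega|n}}y)-\id\Bigr]\mf{\widetilde{\omega|n}}^{\prime}y.
\end{equation*}
The outer similarity $(\mf{\widetilde{\omega|n}}^{\prime}x)^{-1}\mf{\widetilde{\omega|n}}^{\prime}y$ has operator norm at most $\mK$ by bounded distortion \eqref{eq:K bounded distortion}, while $\holder$-Hölder continuity of $\mf{\omega_{n+1}}^{\prime}$ at points $O(\rmax^{n})$ apart bounds the bracket by a multiple of $\rmax^{n\holder}$. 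Hence the derivatives are Cauchy at the same rate, so they converge uniformly; standard calculus for uniform limits of derivatives then identifies the limit with $\bplim{\omega}x^{\prime}y$. The only delicate point I foresee is keeping a single alphabet-independent constant through the Hölder remainder estimate so that the product produced by the chain rule leaves the rate geometric in $n$ rather than polynomial; Fact \ref{fac:rate of convergence} packages precisely the product-rule control required here.
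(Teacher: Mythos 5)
Your proposal is correct, and it reaches the conclusion by a genuinely different route than the paper.

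The paper's proof of item~\eqref{enu:pro:(Distortion-converges) 1 Both--converge} is indirect: it cites Z\"ahle's Theorem~1.(i) for the convergence of $\left|\bp{\widetilde{\omega|n}}xy\right|$, then introduces an auxiliary point $z$ and the cross-ratio identity \eqref{eq:zrp in terms of distances} to deduce convergence of the scalar $\zrp{\tau}xy$, proves convergence of the orthogonal part $\left(\bp{\tau}x^{\prime}y\right)^{\orth}$ by a telescoping estimate in the $O(d)$-metric, multiplies the scalar and orthogonal parts to obtain convergence of $\bp{\tau}x^{\prime}y$, and only then integrates $\bp{\tau}x^{\prime}$ along a curve to recover convergence of $\bp{\tau}xy$. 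You instead prove that both $\bp{\widetilde{\omega|n}}xy$ and $\bp{\widetilde{\omega|n}}x^{\prime}y$ are Cauchy directly, by computing the one-step difference with the chain rule and bounding it using the $C^{1+\holder}$ Taylor remainder of the outer map $\mf_{\omega_{n+1}}$, then identifying the limit of the derivatives with the derivative of the limit by standard uniform-convergence calculus. This is more self-contained (no appeal to Z\"ahle's theorem and no need to verify it extends from $\mX$ to $\overline{\bV}$) and avoids the cross-ratio detour, whose only role in the paper is to keep the denominator $\left|\bp{\tau}yz\right|$ bounded away from zero when $x$ and $y$ are close; your consecutive-difference estimate never divides by $\left|x-y\right|$ and so sidesteps that issue. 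The paper's split into scalar and orthogonal factors is not arbitrary, however: the $O(d)$-telescope is exactly what is reused when the paper verifies H\"older regularity of the Rokhlin cocycle in Lemma \ref{lem:ergodic-dynamical-system}, so there the extra decomposition earns its keep.

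Two small points worth tightening. For item~\eqref{enu: pro:(Distortion-converges) 2 BDP} the path-integral of $\left|\mf_{\widetilde{\omega|n}}^{\prime}\right|$ gives the upper bound directly, but for the lower bound you should either cite Patzschke's Lemma~2 as the paper does, or apply your upper-bound argument to $\mf_{\widetilde{\omega|n}}^{-1}$ on $\mf_{\widetilde{\omega|n}}\overline{\bV}$ (which also enjoys bounded distortion). And the Taylor-remainder bound $\left|R_{n}\right|\leq c\left|\mf_{\widetilde{\omega|n}}y-\mf_{\widetilde{\omega|n}}x\right|^{1+\holder}$ needs the segment between the two image points to stay inside the domain of $\mf_{\omega_{n+1}}^{\prime}$; this is where the $C^{1+\holder}$ extension to $\overline{\mV}\supsetneq\overline{\bV}$ from Assumption~\ref{ass:model} is used, since for large $n$ the images are close and for the finitely many small $n$ a fixed finite path-covering inside $\mV$ suffices.
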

\begin{proof}
For convenience, set $\tau\assign\widetilde{\omega|n}$. 

For the second assertion, note \cite[Lemma 2]{Patzschke97ConformalMultifractalMR1479016}
states \[
\op{const}\leq\frac{\left|\mf{\widetilde{\omega|n}}y-\mf{\widetilde{\omega|n}}x\right|}{\left\Vert \mf{\widetilde{\omega|n}}^{\prime}\right\Vert \left|x-y\right|}\leq\op{const}.\]
(Here, we made use of the $C^{1+\holder}$ conformal extension of
$\mf i$ to $\overline{\mV}$, subject to \eqref{eq:BDP.3}.) Then
$\left|\left(\mf{\widetilde{\omega|n}}^{\prime}x\right)^{-1}\left(\mf{\widetilde{\omega|n}}y-\mf{\widetilde{\omega|n}}x\right)\right|=\left|\mf{\widetilde{\omega|n}}y-\mf{\widetilde{\omega|n}}x\right|/\zr{\widetilde{\omega|n}}x$
and bounded distortion \eqref{eq:BDP.3} complete its proof.

Next, we will show the first assertion. Convergence of $\left|\bp{\tau}xy\right|$
is shown in \cite[Theorem 1.(i)]{Zaehle01AverageDensitySelfconformalMR1825985}.
Though stated only for $x,y\in\mX$, the proof works on all of $\overline{\bV}$.
It could be modified to account for pairwise distances, but for $\zrp{\tau}xy$
we need an approach inspired by cross ratios. The equality \[
\left|\bp{\tau}uv\right|=\zr{\tau}u^{-1}\left|\mf{\tau}u-\mf{\tau}v\right|\]
 permits us to rewrite\begin{equation}
\zrp{\tau}xy=\frac{\zr{\tau}y}{\zr{\tau}x}=\frac{\left|\mf{\tau}y-\mf{\tau}z\right|/\zr{\tau}z}{\left|\mf{\tau}z-\mf{\tau}y\right|/\zr{\tau}y}\,\frac{\zr{\tau}z}{\zr{\tau}x}=\frac{\left|\bp{\tau}zy\right|}{\left|\bp{\tau}yz\right|}\frac{\left|\bp{\tau}xz\right|}{\left|\bp{\tau}zx\right|}\label{eq:zrp in terms of distances}\end{equation}
 in terms of quantities which are known to converge individually at
the desired rate. It remains to find uniform bounds on all the factors.
The as-yet-undetermined variable $z$ was introduced to bound the
denominators away from zero: since $\overline{\bV}$ is connected,
we can always find some $z\in\overline{\bV}$ such that both $\left|x-z\right|$
and $\left|y-z\right|$ exceed $\op{diam}\overline{\bV}/4$. Our second
assertion then places bounds on the $\left|\bp{\tau}vu\right|$-like
expressions. At the price of enlarging the constant $c_{\bpsym}$,
this gives us both the asserted (rate of) convergence of, and bounds
on $\zrp{\tau}xy$. 

The next goal is convergence of $\left(\bp{\tau}x^{\prime}y\right)^{\orth}$.
Each of the finitely many functions $x\mapsto\mf i^{\prime}x$ is
$\holder$-Hölder continuous. Since $\inf_{i,x}\zr ix>0$, $\left(\mf i^{\prime}x\right)^{\orth}=\zr ix^{-1}\mf i^{\prime}x$
also is Hölder with respect to the action invariant metric on $O\left(d\right)$.
Application of the triangle inequality, action invariance, and Hölder
continuity let us demonstrate the Cauchy sequence property (suppressing
{}``$\orth$''): \begin{eqnarray*}
 &  & \phantom{\sum_{i=n}^{n+k-1}}d_{O\left(d\right)}\left(\left(\mf{\widetilde{\omega|n+k}}^{\prime}x\right)^{-1}\left(\mf{\widetilde{\omega|n+k}}^{\prime}y\right),\left(\mf{\widetilde{\omega|n}}^{\prime}x\right)^{-1}\left(\mf{\widetilde{\omega|n}}^{\prime}y\right)\right)\\
 & \leq & \sum_{i=n}^{n+k-1}d_{O\left(d\right)}\left(\left(\mf{\widetilde{\omega|i+1}}^{\prime}x\right)^{-1}\left(\mf{\widetilde{\omega|i+1}}^{\prime}y\right),\left(\mf{\widetilde{\omega|i}}^{\prime}x\right)^{-1}\left(\mf{\widetilde{\omega|i}}^{\prime}y\right)\right)\\
 & \leq & \sum_{i=n}^{n+k-1}d_{O\left(d\right)}\left(\left(\mf{\omega_{i+1}}^{\prime}\mf{\widetilde{\omega|i}}x\right)^{-1}\left(\mf{\omega_{i+1}}^{\prime}\mf{\widetilde{\omega|i}}y\right),\id\right)\\
 & \leq & \sum_{i=n}^{n+k-1}\op{const}\rmax^{i\holder}\leq\op{const}^{\prime}\rmax^{n\holder}.\end{eqnarray*}
By multiplying this with $\zrp{\tau}xy$, we have proved $\bp{\tau}x^{\prime}y$
itself converges at the desired rate. Since $\bp{\tau}xx=0$ and $\overline{\bV}$
is bounded and connected, we can represent $\bp{\tau}xy$ as an integral
of $\bp{\tau}x^{\prime}$ along a curve from $x$ to $y$ to see it
converges as asserted.\comment{\end{proof}
\begin{rem}
For completeness: the proposition (minus the rate of convergence)
also holds for the reverse ordering \begin{eqnarray*}
\ap{\omega|n}xy\assign\left(\mf{\omega|n}y-\mf{\omega|n}x\right)\left(\mf{\omega|n}^{\prime}x\right)^{-1}, &  & \aplim{\omega}xy\assign\lim_{n\ra\infty}\ap{\widetilde{\omega|n}}xy.\end{eqnarray*}
The identities \[
\left|\ap{\widetilde{\omega|n}}uy-\ap{\widetilde{\omega|n}}ux\right|=\left|\bp{\widetilde{\omega|n}}u^{\prime}x\right|^{-1}\left|\bp{\widetilde{\omega|n}}uy\right|,\]
 \[
\left\langle a\middle|b\right\rangle =\frac{1}{2}\left(\left|a-0\right|^{2}+\left|b-0\right|^{2}-\left|a-b\right|^{2}\right)\]
 establish convergence of $\apsym$ modulo the orthogonal group. The
family $\ap{\widetilde{\omega|n}}x$ is $n$-equi\-continuous due
to bounded distortion, which carries over to $\ap{\widetilde{\omega|n}}x^{\prime}$
because it is conformal. Arzela-Ascoli and the closedness of the differential
operator imply \[
\left(\lim\ap{\widetilde{\omega|n}}x\right)^{\prime}x=\lim\ap{\widetilde{\omega|n}}x^{\prime}x=\id.\]
 This fixes the orthogonal group gauge.\end{rem}
\begin{proof}
}
\end{proof}
Recall the shift dynamical system on $\mIN\times\mJ\times O\left(d\right)$
is given by $\ex{\mmbi}=\mmbi\otimes\text{Haar}$ and \[
\ex{\msb}^{n}\left(\omega,x,g\right)=\left(\widetilde{x|n}.\omega,\,\ms^{n}x,\,\left(\mf{x|n}^{\prime}\ms^{n}x\right)^{\orth-1}g\right).\]
Let $\left(\eta,u\right)\in\mIN\times\mJ$ be the reference point,
and recall \[
\G_{\eta u}=\overline{\left<\left(\bplim{\eta}u^{\prime}x\right)^{\orth-1}\left(\mf i^{\prime}x\right)^{\orth}\left(\bplim{\eta}u^{\prime}\mf ix\right)^{\orth}\,:\, x\in\mJ,i\in\mI\right>_{O\left(d\right)}}.\]

\begin{lem}
\label{lem:ergodic-dynamical-system}(Ergodic fibers of group extended
shift) Let $f:\mIN\times\mJ\times O\left(d\right)\ra\mathbb{R}$ be
measurable and $\ex{\mmbi}$-integrable. The Birkhoff limit is almost
everywhere given by\begin{multline}
\lim_{N\ra\infty}\frac{1}{N}\sum_{n<N}f\circ\ex{\msb}^{n}\left(\omega,x,g\right)=\\
\int\limits _{\mIN\times\mJ}\int\limits _{\G_{\eta u}}f\left(\hat{\upsilon},\hat{y},\left(\bplim{\eta}u^{\prime}\hat{y}\right)^{\orth-1}\hat{h}\left(\bplim{\eta}u^{\prime}x\right)^{\orth}g\right)\, d\HG\left(\hat{h}\right)d\mmbi\left(\hat{\upsilon},\hat{y}\right),\label{eq: projection onto ergodic fibres}\end{multline}
 where $\HG$ is the Haar probability on $\G_{\eta u}$. (It has an
interpretation as the $\ex{\mmbi}$-conditional expectation of $f$
on $\underline{\msb}$-invariant sets.)\end{lem}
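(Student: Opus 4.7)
The plan is to apply Birkhoff's ergodic theorem to $(\mIN\times\mJ\times O(d),\ex{\msb},\ex{\mmbi})$ and then identify the $\ex{\msb}$-invariant $\sigma$-algebra $\mathcal{I}$ via the Brin ergodicity group $\G_{\eta u}$. Birkhoff yields $\ex{\mmbi}$-almost everywhere convergence of the Cesaro averages to $E[f\mid\mathcal{I}](\omega,x,g)$, the conditional expectation of $f$ on $\mathcal{I}$; the remaining task is to compute that conditional expectation.

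The key step is a coordinate change on the $O(d)$-fibre by $\Psi(x):=\left(\bplim{\eta}u^{\prime}x\right)^{\orth}$: set $\tilde g:=\Psi(x)g$, a Borel bijection on $\mIN\times\mJ\times O(d)$ preserving $\ex{\mmbi}$, and continuous because $\Psi$ is uniformly continuous on $\overline{\bV}$ by Proposition \ref{pro:(Distortion-converges)}\eqref{enu:pro:(Distortion-converges) 1 Both--converge}. A direct computation from \eqref{eq:DEF group extended shift} then rewrites the one-step cocycle acting on $\tilde g$ so that, up to inverses, it agrees with the defining generators \eqref{eq: DEF Brin group} of $\G_{\eta u}$. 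Consequently every iterated cocycle value of the rewritten extension lies in $\G_{\eta u}$, and the essential range of the conjugated cocycle is $\G_{\eta u}$ itself (the latter being built into the closure in \eqref{eq: DEF Brin group}).

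I would then invoke the standard ergodic-decomposition theorem for skew products of an ergodic base by a compact-group cocycle (Anzai; Brin; Keynes--Newton). The base $(\mIN\times\mJ,\msb,\mmbi)$ is ergodic, so the theorem identifies the ergodic components of $\ex{\msb}$ with the left $\G_{\eta u}$-orbits on the $O(d)$-fibre (in $\tilde g$-coordinates), each endowed with the normalised Haar probability $\HG$. Hence
\[ E[f\mid\mathcal{I}](\omega,x,\tilde g)=\int_{\mIN\times\mJ}\int_{\G_{\eta u}}f\bigl(\hat\upsilon,\hat y,\Psi(\hat y)^{-1}\hat h\,\tilde g\bigr)\,d\HG(\hat h)\,d\mmbi(\hat\upsilon,\hat y). \]
Substituting back $\tilde g=\Psi(x)g$ yields the formula claimed.

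The principal obstacle is the identification of $\mathcal{I}$ with $\G_{\eta u}$-orbit invariance. The easy direction, that $\HG$-averages over fibres produce $\ex{\msb}$-invariant functions, follows directly from the cocycle identity and left invariance of Haar measure. The converse asks that, for $\mmbi$-almost every $(\omega,x)$, the closed subgroup of $O(d)$ generated by cocycle products along the $\msb$-orbit through $(\omega,x)$ equals $\G_{\eta u}$; this combines the defining closure in \eqref{eq: DEF Brin group}, ergodicity of the base (to make $\msb$-orbits visit each generator up to arbitrarily small error), density of $\msb$-orbits in $\mIN\times\mJ$, and the uniform continuity of $\bplim{\eta}u^{\prime}$ on $\overline{\bV}$ (Proposition \ref{pro:(Distortion-converges)}\eqref{enu:pro:(Distortion-converges) 1 Both--converge}) to lift the symbolic density into pointwise continuous group-theoretic invariance on the fibre.
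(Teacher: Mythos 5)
Your coordinate change $\Psi(x)=\left(\bplim{\eta}u^{\prime}x\right)^{\orth}$ is exactly the function $\alpha$ the paper constructs (as a t-chain holonomy to the reference point $\left(\eta,u\right)$), and your overall strategy matches the paper's: apply Birkhoff, then identify the invariant $\sigma$-algebra via compact-group-extension theory after conjugating the cocycle. Your verification that the conjugated one-step cocycle equals (an inverse of) a generator of $\G_{\eta u}$ is also correct.

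However, there is a genuine gap in the last paragraph. You propose that it suffices to show, for $\mmbi$-a.e.\ $\left(\omega,x\right)$, that the closed subgroup of $O\left(d\right)$ generated by cocycle products along the $\msb$-orbit through $\left(\omega,x\right)$ equals $\G_{\eta u}$, and to deduce that the invariant $\sigma$-algebra reduces to $\G_{\eta u}$-orbit invariance on the fibre. That criterion bounds the Mackey group (Zimmer's essential range) only from above, not from below: a cocycle whose pointwise values generate a given dense subgroup can still be a coboundary, in which case the extension is far from ergodic and the ergodic components are proper subsets of the claimed cosets. Ergodicity of the base and density of orbits both hold in coboundary counterexamples, so they do not distinguish the cases. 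What is actually needed is that the conjugated cocycle is not cohomologous to one taking values in a proper closed subgroup of $\G_{\eta u}$; for symbolic systems this is supplied by Brin's theory, but only after one verifies the system is \emph{reduced}, i.e.\ that every t-chain (stable/unstable chain) has trivial holonomy after conjugation. The paper's proof does exactly this: $\alpha$ is defined as the holonomy of the local-product t-chain $\left(w,\left[w^{0},w\right],w^{0}\right)$, so the conjugated holonomies vanish; only then does the Brin group equal the closed subgroup generated by the cocycle values, hence $\G_{\eta u}$, and only then do cosets of $\G_{\eta u}$ give the ergodic decomposition. You would need to add this reduction verification (or an equivalent Liv\v{s}ic-type rigidity argument) to close the gap. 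The remaining step of extending the fibre decomposition from $\G_{\eta u}$ to Haar-a.e.\ $g\in O\left(d\right)$, which the paper handles via Federer's coarea formula, can reasonably be absorbed into a strong enough form of the Anzai/Keynes--Newton/Zimmer theorem you cite.
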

\begin{proof}
Firstly, we will summarize the theory of Brin groups according to
\cite[Section 2.2]{Dolgopyat02MixingOfCompactExtensionsHyperbolicMR1919377}
because that work does not assume a smooth manifold. The shift space
$\left(\mI^{\mathbb{Z}},\msb,\mmbi\right)$ can be extended by a Hölder
function $\tau:\mI^{\mathbb{Z}}\ra H$ with values in a compact, connected
Lie group $H$. The skew product $T\left(w,g\right)=\left(\msb w,\tau\left(w\right)g\right)$
defines the extended dynamical system is $\left(\mI^{\mathbb{Z}}\times\G,T,\mmbi\otimes\text{Haar}_{H}\right)$.
Write $\tau_{n}\left(\omega,x\right)\assign\tau\left(\msb^{n-1}w\right)\dots\tau\left(w\right)$
and $\tau_{n}^{-1}\left(\omega,x\right)$ for its point-wise inverse
in $H$. The stable sets, unstable sets, and orbits are defined by
\begin{eqnarray*}
W^{s}\left(w,g\right) & = & \left\{ \left(\overline{w},\left[\lim_{n\ra\infty}\tau_{n}^{-1}\left(\overline{w}\right)\tau_{n}\left(w\right)\right]g\right)\,:\, w_{i}=\overline{w}_{i},i\geq N\,\text{for some }N\right\} ,\\
W^{u}\left(w,g\right) & = & \left\{ \left(\overline{w},\left[\lim_{n\ra\infty}\tau_{n}\msb^{-n}\left(\overline{w}\right)\tau_{n}^{-1}\msb^{-n}\left(w\right)\right]g\right)\,:\, w_{i}=\overline{w}_{i},i\leq-N\,\text{for some }N\right\} ,\\
W^{o}\left(w,g\right) & = & \left\{ \left(\msb^{n}\left(w\right),\left[\tau_{n}\left(w\right)\right]g\right),\,\left(\msb^{-n}\left(w\right),\left[\tau_{n}^{-1}\msb^{-n}\left(w\right)\right]g\right)\,:\, n\in\mathbb{N}\right\} .\end{eqnarray*}
 A t-chain is a finite sequence $W=\left(w_{\left(1\right)}g_{\left(1\right)},\dots,w_{\left(n\right)}g_{\left(n\right)}\right)$
from $\mI^{\mathbb{Z}}\times H$ with $\left(w_{\left(i+1\right)},g_{\left(i+1\right)}\right)\in W^{s}\left(w_{\left(i\right)},g_{\left(i\right)}\right)\cup W^{u}\left(w_{\left(i\right)},g_{\left(i\right)}\right)$,
and an e-chain also allows $W^{o}\left(w_{\left(i\right)},g_{\left(i\right)}\right)$.
The holonomy element of $W$ is $g\left(W\right)=g_{\left(n\right)}g_{\left(1\right)}^{-1}$;
it depends only on the $\mI^{\mathbb{Z}}$ part in case of a t-chain.
The Brin ergodicity group $\Gamma_{e}\left(w\right)\subseteq H$ at
$w$ is generated set-theoretically by all e-chains that start and
end in $w$.

The system is called reduced if any two points in $\mI^{\mathbb{Z}}$
can be connected by a t-chain $W$ with trivial holonomy element $g\left(W\right)=\id$.
Then $\Gamma_{e}$ is generated set-theoretically by all e-chains
without any restrictions. Its closure $\overline{\Gamma_{e}}\subseteq H$
is generated by all $\tau\left(w\right)$, $w\in\mI^{\mathbb{Z}}$.
The extended dynamical system is ergodic if and only if $H=\overline{\Gamma_{e}}$. 

Our goal is to reduce it. An active coordinate transformation $\Phi$
of $\mI^{\mathbb{Z}}\times H$, \[
\Phi\left(w,g\right)=\left(w,\alpha\left(w\right)g\right)=\left(w^{\prime},g^{\prime}\right)\]
 conjugates $T$ to $T^{\prime}$ (i.e., $\Phi\circ T=T^{\prime}\circ\Phi$)
if and only if it maps $\tau_{n}$ to \[
\tau_{n}^{\prime}\left(w^{\prime}\right)=\alpha\left(\msb^{n}w\right)\tau_{n}\left(w\right)\alpha^{-1}\left(w\right).\]
Thus $g^{\prime}\left(W\right)=\alpha\left(w_{\op{end}}\right)g\left(W\right)\alpha^{-1}\left(w_{\left(1\right)}\right)$.
Denote $\left[u,v\right]$ the local product, i.e., $u$ at negative
and $v$ at positive indices. We pick a reference point $w^{0}$,
connect it to $w$ by the t-chain $W_{w}=\left(w,\left[w^{0},w\right],w^{0}\right)$,
and set $\alpha\left(w\right)\assign g^{-1}\left(W_{w}\right)$, i.e.,
\[
\alpha\left(w\right)=\left[\lim_{n\ra\infty}\tau_{n}^{-1}\left(w\right)\tau_{n}\left(\left[w^{0},w\right]\right)\right]\left[\lim_{n\ra\infty}\tau_{n}\left(\msb^{-n}\left[w^{0},w\right]\right)\tau_{n}^{-1}\left(\msb^{-n}w^{0}\right)\right].\]
Our coordinate transformation $\Phi$ reduces the system because $g^{\prime}\left(W_{w}\right)=\id$.
Therefore, the dynamical subsystem $\left(\mI^{\mathbb{Z}}\times\overline{\Gamma_{e}},T^{\prime},\mmbi\otimes\mathbf{\H_{\overline{\Gamma_{e}}}}\right)$
is ergodic, where $\H_{\overline{\Gamma_{e}}}$ is the Haar probability
on $\overline{\Gamma_{e}}$. 

Secondly, we will more explicitly compute the ergodic fibres of the
original dynamical system. (Our $\alpha$ corresponds to $h$ of \cite[Theorem 3]{ParryPollicott97LivsicCompactCocycleMR1489146}
and $u$ of \cite[Theorem 1.3]{Raugi07ErgodicSkewProductsMR2430192}
and \cite[(10)]{ConzeRaugi09ErgodicDecompositionCocycleMR2539552},
in the sense that $\overline{\Gamma_{e}}\alpha$ agrees with its counterparts.)
In the original coordinates, $\overline{\Gamma_{e}}$ and $\alpha$
depend on the choice of reference point: \[
\overline{\Gamma_{e}}=\overline{\left<\alpha\left(\msb\hat{w}\right)\,\tau\left(\hat{w}\right)\,\alpha^{-1}\left(\hat{w}\right)\,:\,\hat{w}\in\mI^{\mathbb{Z}}\right>_{H}}.\]
For all $h\in H$, define the function \[
f_{h}\left(w,g\right)\assign f\left(w,gh\right)\]
on $\mI^{\mathbb{Z}}\times H$. Birkhoff's ergodic theorem for the
ergodic, reduced subsystem $\mI^{\mathbb{Z}}\times\overline{\Gamma_{e}}$
implies that $f_{h}\circ\Phi^{-1}$ converges on $\H_{\overline{\Gamma_{e}}}$-almost
all $\alpha\left(w\right)g\in\overline{\Gamma_{e}}$: \[
\frac{1}{n}\sum_{l<n}f\circ T^{l}\left(w,gh\right)=\frac{1}{n}\sum_{l<n}\left[f_{h}\circ\Phi^{-1}\right]\circ\left(T^{\prime}\right)^{l}\circ\Phi\left(w,g\right)\underset{n\ra\infty}{\longrightarrow}\]
 \[
\int_{\mI^{\mathbb{Z}}}\int_{\overline{\Gamma_{e}}}f_{h}\circ\Phi^{-1}\left(\hat{w},\hat{g}\right)d\H_{\overline{\Gamma_{e}}}\left(\hat{g}\right)d\mmbi\left(\hat{w}\right)=\int_{\mI^{\mathbb{Z}}}\int_{\overline{\Gamma_{e}}}f\left(\hat{w},\alpha\left(\hat{w}\right)^{-1}\hat{g}\alpha\left(w\right)gh\right)d\H_{\overline{\Gamma_{e}}}\left(\hat{g}\right)d\mmbi\left(\hat{w}\right).\]
We have used $T=\Phi^{-1}\circ\Phi\circ T=\Phi^{-1}\circ T^{\prime}\circ\Phi$
to insert $\Phi$ into the first line. In the last equality, we have
replaced $h$ with $\alpha\left(w\right)gh$ because $\H_{\overline{\Gamma_{e}}}$
is action invariant.

Birkhoff's theorem can also be applied to $f$ on the original dynamical
system $\mI^{\mathbb{Z}}\times H$. The above ergodic sum converges
to the ergodic fibre average for $\H_{H}$-almost every $gh$. We
still need to show $\H_{H}$-almost every $gh\in H$ can be written
using an $h$-dependent, $\H_{\overline{\Gamma_{e}}}$-full set of
$\alpha\left(w\right)g\in\overline{\Gamma_{e}}$. But Federer's coarea
formula \cite[Theorem 5.4.9]{KrantzParks08GeometricIntegrationTheoryMR2427002}
proves $\H_{H}$ can be obtained by integrating right translates of
$\alpha\left(w\right)^{-1}\H_{\overline{\Gamma_{e}}}$ because any
Haar probability is a Hausdorff measure.

Thirdly, we will apply the above theory. The positive (right) index
part of $\left(\omega,x\right)\in\mI^{\mathbb{Z}}$ will be given
by $x\in\mJ$ and the negative (left) one by $\omega\in\mIN$. Both
the thermodynamic potential $\ln\zr{x|1}{\ms x}$ and $\left(\mf{x|1}^{\prime}\ms x\right)^{\orth}$
are Lipschitz continuous on $H\assign O\left(d\right)$ with respect
to \[
d_{\theta}\left(\left(\omega,x\right),\left(\eta,u\right)\right)=\max\left\{ \theta^{n}\,:\,\omega_{n}\neq\eta_{n}\,\text{or }x_{n}\neq u_{n}\right\} \]
 if $\theta\assign\rmax^{\holder}$, as required. We set \[
\tau_{n}\left(\omega,x\right)\assign\left(\mf{x|n}^{\prime}\ms^{n}x\right)^{\orth-1}=\left(\mf{x_{n}}^{\prime}\ms^{n}x\right)^{\orth-1}\dots\left(\mf{x_{1}}^{\prime}\ms x\right)^{\orth-1},\]
 and $\tau_{n}\msb^{-n}\left(\omega,x\right)=\left(\mf{\widetilde{\omega|n}}^{\prime}x\right)^{\orth-1}$.
Let $\left(\eta,u\right)$ be our reference point $w^{0}$. The t-chain
$\left(w,\left[w^{0},w\right],w^{0}\right)$ becomes $\left(\left(\omega,x\right),\left(\eta,x\right),\left(\eta,u\right)\right)$.
Note $\tau_{n}$ depends only on $x$, so the stable set is flat,
i.e., $\left(\eta,x,g\right)\in W^{s}\left(\omega,x,g\right)$. We
compute \[
\alpha\left(\omega,x\right)=\left[\lim_{n\ra\infty}\id\right]\,\left[\lim_{n\ra\infty}\left(\mf{\widetilde{\eta|n}}^{\prime}x\right)^{\orth-1}\left(\mf{\widetilde{\eta|n}}^{\prime}u\right)^{\orth}\right]=\left(\bplim{\eta}u^{\prime}x\right)^{\orth}\]
 (by Proposition \ref{pro:(Distortion-converges)}) and use the formulas. 
\end{proof}

\subsection{Renewal radii and covariant neighborhood net\label{sub:neighborhood-net}}

The next two sections have more geometrical content. They generalize
the methods from the self-similar case (\cite{BohlZaehle12CurvatureDirectionMeasures1111.4457,RatajZaehle10CurvatureDensitiesSelfSimilarSets})
and unify several notions of curvature densities (Remark \ref{rem:Our-A-unifies}).

We will later split the $d\ze$-integral in \eqref{eq:integral-of-curvature-density}
into chunks of comparable level of detail of $\mJ$ {}``visible''
through the parallel set $\mJ_{\ze}$. More precisely, near a point
$x\in\mJ$, the parallel set width $\R\left(x,m\right)$ is {}``well-adapted''
to magnification under $\mf{x|m}^{-1}$. Intuitively, $\R\left(x,m\right)\sim\dist\left(x,\mf{x|m}\mX^{c}\right)$.
{}``Well-adapted'' means we can localize the curvature measures
because no other level set $\left(\mf{\tau}\mJ\right)_{\R\left(x,m\right)}$,
$\tau\in\mI^{m}$, overlaps $\left(\mf{x|m}\mJ\right)_{\R\left(x,m\right)}$
in an $\R\left(x,m\right)$-ball around $x$. This requirement and
covariance under $\mf{x|m}$ naturally lead to our definition of the
renewal radius $\R\left(x,m\right)$. For technical reasons, we will
integrate a tent function $\Along$ around $x$ instead the curvature
of the $\R\left(x,m\right)$-ball itself. The support of $z\mapsto\A xz{\R\left(x,m\right)}$
is contained in a slightly larger ball because $\Along$ needs to
sample an environment of $\partial\left(\mJ_{\ze}\right)$.

Recall $1<a$ and $0<\Rstart\leq a^{-1}\dist\left(\mJ,\bV^{c}\right)$.
\begin{defn}
\label{def:R, N} Define for $\mmc$-almost all $x\in\mJ$ and $m\in\mathbb{N}$:
$\R\left(x,-1\right)\assign\Rstart$, \begin{eqnarray*}
\widetilde{\R}\left(x,0\right) & \assign & \frac{\dist\left(x,X^{c}\right)}{2\Kf a}\min\left\{ 1,\frac{2\Kf^{-1}a\Rstart}{\max_{x\in\mJ}\dist\left(x,\mX^{c}\right)}\right\} ,\\
\R\left(x,m\right) & \assign & \sup_{n\geq m}\zr{x|n}{\ms^{n}x}\widetilde{\R}\left(\ms^{n}x,0\right).\end{eqnarray*}
We could eliminate the minimum in the definition of $\widetilde{\R}\left(x,0\right)$
if we replaced the (OSC) set $\mX$ with its iterated image under
the IFS.\end{defn}
\begin{lem}
\label{lem:(Renewal-radii)}(Renewal radii) For $\mmc$-almost all
$x,y\in\mJ$, all $m\geq0$, $z\in\bV$: 
\begin{enumerate}
\item \label{enu:lem:(Renewal-radii) 4 intersection mJ}Avoids overlap sets
$\left(\mf{\tau}\mJ\right)_{\ze}\cap\left(\mf{\vartheta}\mJ\right)_{\ze}$
($\tau,\vartheta\in\mI^{m}$): \begin{eqnarray*}
\left|z-x\right|<a\R\left(x,m\right) & \Longrightarrow & z\notin\left(\mf{x|m}\mX^{c}\right)_{\R\left(x,m\right)},\\
\left|y-x\right|<2a\R\left(x,m\right) & \Longrightarrow & x|m=y|m,\end{eqnarray*}

\item \label{enu:lem:(Renewal-radii) covariance}Covariance: \textup{$\R\left(x,m\right)=\zr{x|m}{\ms^{m}x}\R\left(\ms^{m}x,0\right)$,}
\item \label{enu:lem:(Renewal-radii) 1 monotonicity}Monotonicity: $a^{-1}\op{dist}\left(\mJ,\bV^{c}\right)\geq\Rstart\geq\R\left(x,m\right)\geq\R\left(x,m+1\right)>0$,
\item \label{enu:lem:(Renewal-radii)  2 limit}Limit: $\R\left(x,m\right)\underset{m\ra\infty}{\longrightarrow}0$,
\item \textup{\label{enu:lem:(Renewal-radii) 6 integrable}}Integrability:
$\int_{\mJ}\left|\ln\R\left(y,0\right)\right|\, d\mmc\left(y\right)<\infty$
and, for some $c>1$: \begin{equation}
\int_{\mJ}c^{\left|\ln\R\left(y,0\right)\right|}d\mmc\left(y\right)<\infty.\label{eq:logdist is very integrable}\end{equation}
 
\end{enumerate}
\end{lem}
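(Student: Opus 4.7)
The plan is to treat the items in the order 2, 3, 4, 1, 5, so that covariance and simple bounds are in hand before the deeper geometric steps. Item 2 (covariance) follows by decomposing $\mf{x|n} = \mf{x|m}\circ\mf{(\ms^m x)|(n-m)}$, applying the chain rule at $\ms^n x$ (using $\mf{(\ms^m x)|(n-m)}\ms^n x = \ms^m x$) to factor $\zr{x|n}{\ms^n x} = \zr{x|m}{\ms^m x}\cdot\zr{(\ms^m x)|(n-m)}{\ms^n x}$, and re-indexing the supremum in \eqref{eq:predef renewal radius R} by $k := n-m$. Item 3 is then routine: monotonicity follows from enlarging the sup-index set; the upper bound $\R(x,m)\le\Rstart$ combines $\zr{x|n}{\ms^n x}\le\rmax^n\le 1$ with the elementary estimate $\widetilde{\R}(\cdot,0)\le\Kf^{-2}\Rstart$ (the $\min$ in the definition of $\widetilde{\R}$ was calibrated precisely to force this); positivity follows from $\mmc(\mJ\cap\Int\mX) = 1$ (SOSC) together with ergodicity of $\ms$, which guarantees some term in the supremum is positive for $\mmc$-a.e. $x$. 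Item 4 is immediate because each term in the sup is bounded by $\rmax^n\Rstart \to 0$.

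Item 1 is the key geometric content, where the constants in $\widetilde{\R}$ are rigged so that bounded distortion \eqref{eq:BDP.3} does the work. One first establishes $B(\ms^m x,(1+a)\R(\ms^m x,0)\Kf)\subseteq\mX$ by a term-by-term analysis of the supremum defining $\R(\ms^m x,0)$: for each $k$, push the trivial bound $\widetilde{\R}(\ms^{m+k}x,0)\le\dist(\ms^{m+k}x,\mX^c)/(2\Kf a)$ back to the base point $\ms^m x$ via \eqref{eq:BDP.3}, obtaining an estimate in terms of $\dist(\ms^m x,\mf{\cdots}\mX^c)\le\dist(\ms^m x,\mX^c)$. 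A second application of \eqref{eq:BDP.3} transports this to $B(x,(1+a)\R(x,m))\subseteq\mf{x|m}\mX$; the triangle inequality then yields the first bullet. For the second bullet, suppose $|y-x|<2a\R(x,m)$ with $y|m\neq x|m$. By OSC, $\mf{y|m}\mJ$ and $\mf{x|m}\Int\mX$ are disjoint, so $y\in\mf{x|m}\mX^c$. Applying the first bullet symmetrically at $y$ gives $\dist(y,\mf{y|m}\mX^c)>(1+a)\R(y,m)$; since $x\in\mf{x|m}\mX\subseteq\mf{y|m}\mX^c$, we deduce $|y-x|>(1+a)\R(y,m)$. Comparing $\R(y,m)$ and $\R(x,m)$ through the bounded distortion constant \eqref{eq:K bounded distortion} (applied at the longest common prefix of $x$ and $y$, and using the covariance already proved in item 2) closes the case.

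Item 5 reduces to the integrability of $\dist(\cdot,\mX^c)^{-\alpha}$ for some $\alpha>0$. Indeed, $\R(y,0)\ge\widetilde{\R}(y,0)$ is, by definition, a constant multiple of $\dist(y,\mX^c)$, while $\R(y,0)\le\Rstart<1$ so $|\ln\R(y,0)| = -\ln\R(y,0)$ and $c^{|\ln\R(y,0)|} = \R(y,0)^{-\ln c}\le\op{const}\cdot\dist(y,\mX^c)^{-\ln c}$. By the distribution (layer-cake) formula this is finite provided $\mmc\{y\in\mJ:\dist(y,\mX^c)<s\}\le Cs^\beta$ for some $\beta>0$ and one chooses $\ln c<\beta$. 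Such polynomial decay is standard under SOSC: cover the $s$-neighborhood of $\partial\mX$ inside $\mJ$ by cylinders $\mf{\tau}\mJ$ of diameter $\sim s$ whose number is controlled, and apply Ahlfors regularity \eqref{eq:D-set-property Ahlfors} to each cylinder.

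I expect the main obstacle to be the second bullet of item 1, where the two renewal radii $\R(x,m)$ and $\R(y,m)$ attached to incompatible codings $x|m\neq y|m$ must be reconciled; the construction of $\R$ is inherently tied to a single coding, so comparing across codings forces one either to sharpen the first-bullet bound beyond the naive $(1+a)\R$ margin, or to show via bounded distortion that $(1+a)\R(y,m)$ still exceeds $2a\R(x,m)$ by a uniform factor. The polynomial decay in item 5 is also slightly delicate but well-understood for conformal SOSC systems.
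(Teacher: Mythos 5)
Items 2--4 essentially match the paper, and item 5 correctly identifies the reduction to polynomial decay of $\mmc\{\dist(\cdot,\mX^{c})<s\}$ (though the decay itself is cited rather than proved in both your sketch and the paper, which defers to Patzschke). The genuine gaps are in item 1. For the first bullet, your two-stage route applies \eqref{eq:BDP.3} once from $\ms^{m+k}x$ to $\ms^{m}x$ and once more from $\ms^{m}x$ to $x$, which costs two factors of $\Kf$, while the $2\Kf a$ in the denominator of $\widetilde{\R}$ absorbs only one. What the first stage actually yields is $2a\R(\ms^{m}x,0)\le\dist(\ms^{m}x,\mX^{c})$, not $(1+a)\Kf\R(\ms^{m}x,0)\le\dist(\ms^{m}x,\mX^{c})$; the second stage then gives only $B(x,2a\Kf^{-1}\R(x,m))\subseteq\mf{x|m}\mX$, and $2a\Kf^{-1}\ge 1+a$ forces $\Kf\le 2a/(1+a)<2$, which is not guaranteed. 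The paper applies \eqref{eq:BDP.3} once, directly from $\ms^{n}x$ to $x=\mf{x|n}\ms^{n}x$ via the composite word $x|n$; that single $\Kf$ is exactly what $\widetilde{\R}$ is calibrated to cancel, yielding $\dist(x,\mf{x|m}\mX^{c})\ge 2a\R(x,m)$ directly.

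The second bullet is the obstacle you flagged, and it does fail as written. Having derived $(1+a)\R(y,m)<2a\R(x,m)$, you want a contradiction from a comparison of $\R(y,m)$ with $\R(x,m)$ via bounded distortion at the longest common prefix; but when $y|m\neq x|m$ that prefix has length $<m$, and beyond it the suprema defining $\R(x,m)$ and $\R(y,m)$ involve entirely unrelated cylinder ratios $\zr{x|n}{\ms^{n}x}$ and $\zr{y|n}{\ms^{n}y}$ along divergent codings, so no bounded-distortion constant compares them. The paper never compares the two radii: the same inequality chain that gives the first bullet shows that \emph{any} $y\in\bV$ with $|y-x|<2a\R(x,m)$ satisfies $\dist(y,\mf{x|m}\mX^{c})>0$, i.e.\ $y\in\mf{x|m}\Int\mX$, independently of $y$'s coding; for $y\in\mJ$ the OSC then yields $y\in\mf{x|m}\mJ$, hence $y|m=x|m$, outright. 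You should replace your contradiction scheme with this direct argument.
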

\begin{proof}
(Compare \cite{Patzschke04tangentMeasureSelfconformal}.) The first
assertion holds if any $y\in\bV$ avoids $\mf{x|m}\mX^{c}$ if $\left|y-x\right|<2a\R\left(x,m\right)$.
Indeed: given $z$, set $y\assign2z-x$. Given $y\in\mJ$, (OSC) implies
$y\in\mf{x|m}\mJ$.

We use the Open Set Condition, the triangle inequality, bounded distortion
\eqref{eq:BDP.3} and an undetermined $0<c_{\R}\leq1$:\begin{eqnarray*}
\dist\left(y,\mf{x|m}\mX^{c}\right) & \overset{\text{(OSC)}}{=} & \sup_{n\geq m}\dist\left(y,\mf{x|n}\mX^{c}\right)\geq\sup_{n\geq m}\dist\left(x,\mf{x|n}\mX^{c}\right)-\left|x-y\right|\\
 & > & \sup_{n\geq m}\dist\left(\mf{x|n}\ms^{n}x,\mf{x|n}\mX^{c}\right)-2a\R\left(x,m\right)\\
 & \overset{\eqref{eq:BDP.3}}{\geq} & \Kf^{-1}\sup_{n\geq m}\zr{x|n}{\ms^{n}x}\dist\left(\ms^{n}x,\mX^{c}\right)-\frac{2a}{c_{\R}}\R\left(x,m\right)\assign0.\end{eqnarray*}
 We solve the last equation for $\R\left(x,m\right)$, \[
\R\left(x,m\right)=\frac{c_{\R}}{2a\Kf}\sup_{n\geq m}\zr{x|n}{\ms^{n}x}\dist\left(\ms^{n}x,\mX^{c}\right),\]
 and recuperate the original definition of $\R\left(x,m\right)$ if
$c_{\R}\leq1$ is the largest value such that $\max_{x\in\mJ}\R\left(x,0\right)\leq\Rstart$.
(As an aside, $\widetilde{\R}\left(x,0\right)\leq\R\left(x,0\right)\leq\Kf^{2}\widetilde{\R}\left(x,0\right)$.)

The second assertion (covariance) follows from, ($n\geq m$) \[
\zr{x|n}{\ms^{n}x}=\zr{x|m}{\ms^{m}x}\zr{\left(\ms^{m}x\right)|n-m}{\ms^{n-m}\left(\ms^{m}x\right)},\]
 and the definitions of $\R\left(x,m\right)$ and $\R\left(\ms^{m}x,0\right)$.
Monotonicity is trivial except for $\R\left(x,m\right)>0$, which
follows from the integrability. The limit assertion stems from \[
\R\left(x,m\right)=\zr{x|m}{\ms^{m}x}\R\left(\ms^{m}x,0\right)\leq\rmax^{m}\Rstart.\]
Integrability of $c^{\left|\ln\R\left(y,0\right)\right|}$ is reduced
by $c^{\prime}\dist\left(y,X^{c}\right)=\widetilde{\R}\left(y,0\right)\leq\R\left(y,0\right)\leq\Rstart$
to \begin{equation}
\int c^{\left|\ln\,\dist\left(y,X^{c}\right)\right|}d\mmc\left(y\right)<\infty.\end{equation}
 \cite[Lemma 3.8]{Patzschke04tangentMeasureSelfconformal} proves
$\int\left|\ln\,\dist\left(\cdot,X^{c}\right)\right|d\mmc$ is finite
using (SOSC). Our case is similar. Patzschke dominates the integral
by a geometric series. We can safely increase its base while (in Patzschke's
notation) $0<\ln c<\left|\ln\left(C_{2}^{-1}\left\Vert S_{\eta}^{\prime}\right\Vert r_{\min}\right)\right|\left|\ln\left(1-C_{5}^{-1}\psi\left[\eta\right]\right)\right|$.
\end{proof}
The purpose of $\Along$ is to sample the curvature located in a small
ball $B\left(x,a\ze\right)$, $\ze<\R\left(x,m\right)$. It has to
work together with weak convergence of curvature measures (Section
\ref{sub:Curvature-of-distorted}). So we define it as a continuous
counterpart of $1_{B\left(x,a\ze\right)}\left(z\right)$: the tent
function \[
\A xz{\ze}\assign\max\left(1-\frac{\left|x-z\right|}{\ze a},\,0\right).\]
Many other choices are viable due to \eqref{eq:A cancels}. So we
will treat $\Along$ as a black box and demand only the next lemma
holds.
\begin{rem}
\label{rem:Our-A-unifies}Our $\Along$ unifies previous approaches
in the self-similar setting. It can be replaced with any of the locally
homogeneous neighborhood nets $A_{F}^{\text{\cite{RatajZaehle10CurvatureDensitiesSelfSimilarSets}}}\left(x,\ze\right)$
from \cite[Example 2.1.1]{RatajZaehle10CurvatureDensitiesSelfSimilarSets}
if $\A xz{\ze}\assign1_{A_{F}^{\text{\cite{RatajZaehle10CurvatureDensitiesSelfSimilarSets}}}\left(x,\ze\right)}\left(z\right)$.
(Our continuity is replaced with their Lemma 2.1.3.) This includes
a net that eliminates the denominator $\Along$ in $\hr$ and $\hw$. 
\end{rem}
Items \ref{enu:lem:(neighborhood-net) Measurability}, \ref{enu:lem:(neighborhood-net) D-set-property}
are designed to enable Fubini (Proposition \ref{pro:Fubini}); \ref{enu:lem:(neighborhood-net) Locality},
\ref{enu:lem:(neighborhood-net) ball chain}, \ref{enu:lem:(neighborhood-net) Covariance-w.r.t-the distortion}
to localize and preimage curvature (Lemma \ref{lem:preimages of hr, hw curvdens});
and \ref{enu:lem:(neighborhood-net) D-set-property}, \ref{enu:lem:(neighborhood-net) Lipschitz-continuity}
to preserve weak convergence (Lemma \ref{lem:hw-converges}).
\begin{lem}
\label{lem:(neighborhood-net)}(Covariant neighborhood net) For almost
all $\xhat,\xplain,\yhat,\yplain\in\mJ$, all $u,\zplain,\zhat\in\bV$,
and almost all $\ze,\rplain\leq\Rstart$:
\begin{enumerate}
\item \label{enu:lem:(neighborhood-net) Measurability}(Measurability) $\left(x,z,\ze\right)\mapsto\A xz{\ze}$
is Borel measurable; $0\leq\A xz{\ze}\leq1$.
\item \label{enu:lem:(neighborhood-net) Locality}(Locality) \[
\left\{ \zplain\in\Rd:\A{\xplain}{\zplain}{\rplain}\neq0\right\} \subseteq\Int B\left(\xplain,a\rplain\right)\subseteq\bV\]
.
\item \label{enu:lem:(neighborhood-net) D-set-property}($\zD$-set property)
There is a constant $C_{\Along}$, $0<C_{\Along}<\infty$, such that
\[
C_{\Along}^{-1}\leq\frac{\int\A{\yplain}{\zplain}{\rplain}d\mmc\left(\yplain\right)}{\rplain^{\zD}}\leq C_{\Along}.\]
 More generally, let write $\tau\in\mI^{*}$ and $\bp{\tau}u\zhat\in\left(\left(\bp{\tau}u\mJ\right)_{\rhat}\right)$.
Then \textup{\begin{equation}
C_{\Along}^{-1}\leq\frac{\int\A{\bp{\tau}u\yhat}{\bp{\tau}u\zhat}{\rhat}d\mmc\left(\yhat\right)}{\rhat^{\zD}}\leq C_{\Along}.\label{eq:Along -- constant CA}\end{equation}
}
\item \label{enu:lem:(neighborhood-net) ball chain}(2-chain of supports
avoids intersections $\left[\mf{\tau}\mJ\right]_{\rplain}\cap\left[\mf{\xplain|m}\mJ\right]_{\rplain}$)
If $\rplain\leq\R\left(\xplain,m\right)$, \[
\A{\xplain}{\zplain}{\rplain}\neq0\text{ and }\A{\yplain}{\zplain}{\rplain}\neq0\,\Longrightarrow\xplain|m=\yplain|m.\]

\item \label{enu:lem:(neighborhood-net) Covariance-w.r.t-the distortion}(Covariance
w.r.t the $\bpsym$-distorted distance; compatibility with the dynamics)\\
 If $\rplain\leq\R\left(\xplain,m\right)$ and $\mf{\xplain|m}\zhat\in\left(\left(\mf{\xplain|m}\mJ\right)_{\rplain}\right)$,
then \[
\A{\yplain}{\mf{\xplain|m}\zhat}{\rplain}=\A{\bp{\xplain|m}u\ms^{m}\yhat}{\bp{\xplain|m}u\zhat}{\frac{\rplain}{\zr{\xplain|m}u}}1\left\{ \yplain|m=\xplain|m\right\} .\]

\item \label{enu:lem:(neighborhood-net) Lipschitz-continuity}(Lipschitz
continuity) As a function $\left|x-z\right|\mapsto\A xz{\ze}$ for
fixed $\ze$, \[
\op{Lip}\left(\Along\right)\leq\ze^{-1}.\]
 
\end{enumerate}
\end{lem}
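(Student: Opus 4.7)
The plan splits into three groups of items, all reducing to routine bookkeeping once the right geometric ingredient is chosen.

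\textbf{Immediate from the definition (measurability, locality, Lipschitz continuity).} These three items follow by unwinding $\A{x}{z}{\ze}=\max(1-|x-z|/(a\ze),0)$. Joint Borel measurability and $[0,1]$-valuedness are obvious; the support lies in $\Int B(x,a\ze)$, and this ball lies inside $\bV$ because $a\ze\le a\Rstart\le\dist(\mJ,\bV^c)$ and $x\in\mJ$. As a function of $r=|x-z|$ for fixed $\ze$, $\max(1-r/(a\ze),0)$ is $(a\ze)^{-1}$-Lipschitz, and the standing assumption $a>1$ upgrades this to $\ze^{-1}$-Lipschitz.

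\textbf{The $\zD$-set property.} For the plain case I would sandwich $\A{\,\cdot\,}{z}{\ze}$ between $\tfrac12\mathbf{1}_{\{|y-z|\le a\ze/2\}}$ and $\mathbf{1}_{\{|y-z|\le a\ze\}}$. Picking an anchor $y^{*}\in\mJ$ with $|y^{*}-z|\le\ze$ (supplied by the nearest-point projection; the estimate is vacuous unless $z$ lies near $\mJ$) and applying Ahlfors regularity \eqref{eq:D-set-property Ahlfors} to a ball of radius $\asymp\ze$ around $y^{*}$ gives both bounds $\int\Along\,d\mmc\asymp\ze^{\zD}$. For the distorted estimate \eqref{eq:Along -- constant CA}, I would first establish the pairwise bi-Lipschitz bound
\begin{equation*}
\mK^{-1}\,|y-v|\;\le\;|\bp{\tau}u y-\bp{\tau}u v|\;\le\;\mK\,|y-v|,\qquad y,v\in\overline{\bV},
\end{equation*}
uniformly in $\tau\in\mI^{*}$ and $u\in\mJ$. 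This follows from the identity $|\bp{\tau}u y-\bp{\tau}u v|=|\mf{\tau}y-\mf{\tau}v|/\zr{\tau}u$ combined with the mean-value inequality applied to $\mf{\tau}$ and $\mf{\tau}^{-1}$ on the connected set $\overline{\bV}$ together with \eqref{eq:K bounded distortion}. The same sandwich argument, applied to an anchor $\yhat^{*}\in\mJ$ with $|\bp{\tau}u\yhat^{*}-\bp{\tau}u\zhat|\le\rhat$, then reduces the distorted integral to the plain one up to a universal factor of $\mK^{\pm\zD}$.

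\textbf{Overlap avoidance and covariance.} The overlap item is an immediate consequence of Lemma \ref{lem:(Renewal-radii)}\ref{enu:lem:(Renewal-radii) 4 intersection mJ}: if both tents are nonzero then the triangle inequality gives $|\xplain-\yplain|<2a\rplain\le2a\R(\xplain,m)$, hence $\xplain|m=\yplain|m$. The covariance identity combines this with the rewriting
\begin{equation*}
|\yplain-\mf{\xplain|m}\zhat|=\zr{\xplain|m}u\cdot|\bp{\xplain|m}u\ms^{m}\yplain-\bp{\xplain|m}u\zhat|,
\end{equation*}
which is a direct consequence of $\bp{\tau}u y-\bp{\tau}u v=(\mf{\tau}'u)^{-1}(\mf{\tau}y-\mf{\tau}v)$ after substituting $\yplain=\mf{\xplain|m}\ms^{m}\yplain$. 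Dividing the tent argument by $\zr{\xplain|m}u$ yields the asserted identity, while the overlap item produces the indicator $\mathbf{1}\{\yplain|m=\xplain|m\}$ when the codes disagree.

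\textbf{Main obstacle.} The only nontrivial ingredient is the uniform pairwise bi-Lipschitz bound for $\bp{\tau}u$ used in the $\zD$-set property, since Proposition \ref{pro:(Distortion-converges)}\ref{enu: pro:(Distortion-converges) 2 BDP} is stated only for distances from the base point $u$ (where $\bp{\tau}uu=0$). Everything else is bookkeeping with Ahlfors regularity, the explicit tent function, and the renewal-radius Lemma \ref{lem:(Renewal-radii)}.
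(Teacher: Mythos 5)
Your proposal is correct and follows essentially the same route as the paper's own proof: items 1, 2, 6 unwound from the explicit tent function; items 4, 5 from Lemma \ref{lem:(Renewal-radii)} plus the derivative identity $\bp{\tau}u y-\bp{\tau}u v=(\mf{\tau}'u)^{-1}(\mf{\tau}y-\mf{\tau}v)$; and the $\zD$-set property via an anchor point, the bi-Lipschitz control on $\bp\tau u$, and Ahlfors regularity \eqref{eq:D-set-property Ahlfors}. The paper uses a layer-cake (level-set) decomposition $\int\Along d\mmc=\int_0^1\mmc\{\A{\cdot}{\cdot}{\rhat}\geq 1-t\}\,dt$ with integration restricted to $t\in(1/a,1)$ for the lower bound, whereas you use a two-sided indicator sandwich; these are the same idea, though your sandwich $\tfrac12\mathbf{1}_{\{|y-z|\le a\ze/2\}}\le\Along\le\mathbf{1}_{\{|y-z|\le a\ze\}}$ needs the level set radius to exceed $\ze$ after subtracting the anchor error, which fails when $a<2$; the paper's parametrized levels sidestep this cleanly, and your sandwich can be repaired by choosing a level $1/\alpha$ with $1<\alpha<a$ instead of $1/2$.

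Your observation about the ``main obstacle'' is fair: Proposition \ref{pro:(Distortion-converges)}\eqref{enu: pro:(Distortion-converges) 2 BDP} is literally stated for $|\bp\tau x y|$ with $x$ the base point, yet the paper's own proof applies it to the pair $(\yhat,\xhat_{\zhat})$, neither equal to $u$. This is a real but easily closed gap. Rather than invoking a mean-value inequality on $\overline\bV$ (which would require comparability of geodesic and Euclidean distance in $\overline\bV$), the cleanest closure is the one-line chaining the paper uses implicitly: $|\bp\tau u\yhat-\bp\tau u\xhat_{\zhat}|=\tfrac{\zr\tau{\xhat_{\zhat}}}{\zr\tau u}|\bp\tau{\xhat_{\zhat}}\yhat|$, combine \eqref{eq:K bounded distortion} with Proposition \ref{pro:(Distortion-converges)}\eqref{enu: pro:(Distortion-converges) 2 BDP}, and absorb the extra $\mK$ into the constant. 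Equivalently, note that the proof of that proposition item already rests on Patzschke's pairwise estimate $|\mf\tau y-\mf\tau x|\asymp\|\mf\tau'\||x-y|$, which is exactly the ingredient you want.
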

\begin{proof}
Point one and six are obvious. The second and fourth follow from Lemma
\ref{lem:(Renewal-radii)} point \eqref{enu:lem:(Renewal-radii) 4 intersection mJ}. 

The third assertion will be proved in the version with $\bp{\tau}u$.
It contains the simpler version for $\tau=\varnothing$, $\bp{\tau}u=\op{id}$.
Abbreviate $\bps{\tau}u\assign\bp{\tau}u$. We want to estimate\global\long\def\lev{t}
 \begin{eqnarray*}
\int_{\mJ}\A{\bps{\tau}u\yhat}{\bps{\tau}u\zhat}{\rhat}\, d\mmc\left(\yhat\right) & = & \int_{0}^{1}\int_{\mJ}1\left\{ 1-\lev\geq\A{\bps{\tau}u\yhat}{\bps{\tau}u\zhat}{\rhat}\right\} \, d\mmc\left(\yhat\right)d\lev\\
 & = & \int_{0}^{1}\int_{\mJ}1\left\{ \lev a\rhat\geq\left|\bps{\tau}u\yhat-\bps{\tau}u\zhat\right|\right\} \, d\mmc\left(\yhat\right)d\lev.\end{eqnarray*}
 There is a point $\xhat_{\zhat}\in\mJ$ such that $\left|\bps{\tau}u\zhat-\bps{\tau}u\xhat_{\zhat}\right|\leq\rhat$.
The triangle inequality implies\begin{align*}
\left|\bps{\tau}u\yhat-\bps{\tau}u\zhat\right|-\rhat & \leq & \left|\bps{\tau}u\yhat-\bps{\tau}u\xhat_{\zhat}\right| & \leq & \left|\bps{\tau}u\yhat-\bps{\tau}u\zhat\right|+\rhat,\\
\left\{ \lev a\rhat+\rhat\geq\left|\bps{\tau}u\yhat-\bps{\tau}u\xhat_{\zhat}\right|\right\}  & \geq & 1\left\{ \lev a\rhat\geq\left|\bps{\tau}u\yhat-\bps{\tau}u\zhat\right|\right\}  & \geq & 1\left\{ \lev a\rhat-\rhat\geq\left|\bps{\tau}u\yhat-\bps{\tau}u\xhat_{\zhat}\right|\right\} ,\end{align*}
 and then Proposition \ref{pro:(Distortion-converges)} \eqref{enu: pro:(Distortion-converges) 2 BDP}\begin{align*}
1\left\{ \left(\lev a+1\right)\rhat\mK_{\bpsym}\geq\left|\yhat-\xhat_{\zhat}\right|\right\}  & \geq & 1\left\{ \lev a\rhat\geq\left|\bps{\tau}u\yhat-\bps{\tau}u\zhat\right|\right\}  & \geq & 1\left\{ \left(\lev a-1\right)\rhat/\mK_{\bpsym}\geq\left|\yhat-\xhat_{\zhat}\right|\right\} .\end{align*}
We integrate w.r.t. $d\mmc\left(\yhat\right)d\lev$ and then use the
$\zD$-set property \eqref{eq:D-set-property Ahlfors} of $\mmc$
for balls centered on $\xhat_{\zhat}$, \begin{align*}
\int_{0}^{1}C_{\mJ}\left(\left(\lev a+1\right)\rhat\mK_{\bpsym}\right)^{\zD}d\lev & \geq & \int_{\mJ}\A{\bps{\tau}u\yhat}{\bps{\tau}u\zhat}{\rhat}\, d\mmc\left(\yhat\right) & \geq & \int_{1/a}^{1}C_{\mJ}^{-1}\left(\left(\lev a-1\right)\rhat/\mK_{\bpsym}\right)^{\zD}d\lev.\end{align*}
 This inequality fits between $C_{\Along}\rhat^{\zD}$ and $C_{\Along}^{-1}\rhat^{\zD}$
for a suitable $C_{\Along}$.

To prove the fifth assertion, we will use $\left[\mf{\xplain|m}^{\prime}u\right]^{-1}\circ\mf{\xplain|m}=\bp{\xplain|m}u$
up to an additive constant (Proposition \ref{pro:(Distortion-converges)}
\eqref{enu:pro:(Distortion-converges) 3 similarity}): \begin{equation}
\zr{\xplain|m}u^{-1}\left|y-\mf{\xplain|m}\zhat\right|=\left|\left[\mf{\xplain|m}^{\prime}u\right]^{-1}\left(\mf{\xplain|m}\ms^{m}\yplain-\mf{\xplain|m}\zhat\right)\right|=\left|\bp{\xplain|m}u\circ\ms^{m}\yplain-\bp{\xplain|m}u\zhat\right|.\label{eq:lem:(neighborhood-net) Covariance-w.r.t-the distortion}\end{equation}
Inserting this into \eqref{eq:DEF A balls} results in the assertion.
\end{proof}

\subsection{Localizing and scaling the curvature under the inverse IFS\label{sub:Localizing-and-scaling}}

As $\mJ_{\ze}=\bigcup_{\tau\in\mI^{m}}\left(\mf{\tau}\mJ\right)_{\ze}$,
parts of $\mJ_{\ze}$ agree with $\left(\mf{\tau}\mJ\right)_{\ze}$.
Under the similarity $\mf{\tau}^{\prime}u$, this is the image of
another parallel set of $\bp{\tau}u\mJ$. For given $m\in\mathbb{N}$
and in a small area around $x\in\mJ$, the agreeing $\ze$ are less
than $\R\left(x,m\right)$. This idea also works for the associated
curvature measure instead of the set. Note $\hr$, $\hw$ below contain
the chunk of curvature between $\ze\leq\R\left(x,m\right)$ and $\ze\nleq\R\left(x,m+1\right)$. 

(The curvature of $\left(\bpsym\mJ\right)_{\ze}$ is a combination
of those of $\bpsym^{-1}\left(\left(\bpsym\mJ\right)_{\ze}\right)$,
compare \cite{Bernig03CurvatureTransformationMR1990086}. But this
does not help us.)

Note\begin{equation}
\hr_{\chunkh{\R\left(x,m+1\right)}{\R\left(x,m\right)}}^{k,\pm}\left(x,\ze,f\right)=\int\hi_{m}\left(\id,x,z,\ze\right)f\left(\ex z\right)\, dC_{k}^{\pm}\left(\mJ_{\ze},\ex z\right),\label{eq:hr in terms of hi}\end{equation}

\begin{equation}
\hw_{\tau}^{k,\pm}\left(x,\ze,f\right)=\int\hi_{0}\left(\bp{\tau}u,x,z,\ze\right)f\left(\ex z\right)\, dC_{k}^{\pm}\left(\left(\bp{\tau}u\mJ\right)_{\ze},\bp{\tau}u\ex z\right),\label{eq: hw in terms of hi}\end{equation}
 and with a placeholder $\bpsym$, \begin{equation}
\hi_{m}\left(\bpsym,x,z,\ze\right)\assign\chunk{\R\left(x,m+1\right)}{\R\left(x,m\right)}\left(\frac{\ze}{\left|\bpsym^{\prime}x\right|}\right)\,\frac{\ze^{\zD-k}\:\A{\bpsym x}{\bpsym z}{\ze}}{\int\left|\bpsym^{\prime}y\right|^{\zD}\A{\bpsym y}{\bpsym z}{\ze}\, d\mmc\left(y\right)}.\label{eq: DEF hi}\end{equation}

\begin{lem}
\label{lem:preimages of hr, hw curvdens}Given $\xplain\in\mJ$, $u\in\bV$,
$m\in\mathbb{N}$, the substitutions \begin{eqnarray*}
\xhat\assign\ms^{m}\xplain, & \tau\assign\xplain|m, & \rhat\assign\rplain/\zr{\tau}u\end{eqnarray*}
 transform $\hw$ and $\hr$ into each other: \begin{equation}
\hw_{\tau}^{k,\pm}\left(\xhat,\rhat,f\circ\mf{\tau}\right)=\hr_{\chunkh{\R\left(\xplain,m+1\right)}{\R\left(\xplain,m\right)}}^{k,\pm}\left(\xplain,\rplain,f\right).\label{eq:hw transforms-to hi chunk-of-curvdens}\end{equation}
 \end{lem}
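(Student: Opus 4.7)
The plan is to start from the right-hand side $\hr_{\chunkh{\R\left(\xplain,m+1\right)}{\R\left(\xplain,m\right)}}^{k,\pm}\left(\xplain,\rplain,f\right)$ and transform it into the left-hand side by a single change of variables $z = \mf{\tau}\zhat$. The key structural fact (Proposition \ref{pro:(Distortion-converges)} \eqref{enu:pro:(Distortion-converges) 3 similarity}) is that $\mf{\tau} = T \circ \bp{\tau}u$ on $\overline{\bV}$, where $T$ is the affine similarity $y\mapsto\mf{\tau}u + \mf{\tau}^{\prime}u \cdot y$ with scale $\zr{\tau}u$. Hence $\left(\mf{\tau}\mJ\right)_{\rplain} = T\left(\bp{\tau}u\mJ\right)_{\rhat}$ where $\rhat=\rplain/\zr{\tau}u$.

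First I localize: on the support of $z\mapsto\A{\xplain}z{\rplain}$, Lemma \ref{lem:(neighborhood-net)} \eqref{enu:lem:(neighborhood-net) ball chain} (combined with Lemma \ref{lem:(Renewal-radii)} \eqref{enu:lem:(Renewal-radii) 4 intersection mJ}) forces any contributing point to lie in $\mf{\tau}\mJ$ only. Thus $\mJ_{\rplain}$ agrees with $\mf{\tau}\mJ_{\rplain}$ on an open neighborhood of the relevant $z$, and the locality property \eqref{eq:Ck locality} of curvature allows me to replace $dC_{k}^{\pm}\left(\mJ_{\rplain},\ex z\right)$ by $dC_{k}^{\pm}\left(\left(\mf{\tau}\mJ\right)_{\rplain},\ex z\right)$. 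Then the covariance \eqref{eq:Ck scaling} under $T$ yields $dC_{k}^{\pm}\left(\left(\mf{\tau}\mJ\right)_{\rplain},T\ex{\zhat}\right) = \zr{\tau}u^{k}\,dC_{k}^{\pm}\left(\left(\bp{\tau}u\mJ\right)_{\rhat},\ex{\zhat}\right)$, so substituting $\ex z=T\ex{\zhat}$ (equivalently the pair $\left(z,n\right)=\left(\mf{\tau}\zhat,\left(\mf{\tau}^{\prime}\zhat\right)^{\orth}\hat n\right)$, which matches $f\left(\ex z\right)=f\circ\mf{\tau}\left(\ex{\zhat}\right)$ on the nose) produces the curvature factor appearing in $\hw_{\tau}^{k,\pm}$.

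Next I handle the three scalar factors in the integrand. The tent function in the numerator transforms by Lemma \ref{lem:(neighborhood-net)} \eqref{enu:lem:(neighborhood-net) Covariance-w.r.t-the distortion} with $\yplain=\xplain$: $\A{\xplain}z{\rplain}=\A{\bp{\tau}u\xhat}{\bp{\tau}u\zhat}{\rhat}$. For the denominator $\int\A{\yplain}z{\rplain}d\mmc\left(\yplain\right)$, Lemma \ref{lem:(neighborhood-net)} \eqref{enu:lem:(neighborhood-net) Covariance-w.r.t-the distortion} (now with the indicator $1\{\yplain|m=\tau\}$ restricting the integration to the cylinder $\left[\tau\right]$) combined with $m$-fold iteration of the Perron-Frobenius identity \eqref{eq:perron frobenius, dimension, measure} gives
\[
\int\A{\yplain}z{\rplain}d\mmc\left(\yplain\right)=\int\zr{\tau}{\yhat}^{\zD}\A{\bp{\tau}u\yhat}{\bp{\tau}u\zhat}{\rhat}d\mmc\left(\yhat\right)=\zr{\tau}u^{\zD}\int\zrp{\tau}u{\yhat}^{\zD}\A{\bp{\tau}u\yhat}{\bp{\tau}u\zhat}{\rhat}d\mmc\left(\yhat\right),
\]
using $\zrp{\tau}u{\yhat}=\zr{\tau}{\yhat}/\zr{\tau}u$ from \eqref{eq: abs psi prime}. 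Combining $\rplain^{\zD-k}=\zr{\tau}u^{\zD-k}\rhat^{\zD-k}$, the covariance factor $\zr{\tau}u^{k}$, and the denominator factor $\zr{\tau}u^{-\zD}$, every power of $\zr{\tau}u$ cancels and the integrand becomes the one defining $\hw_{\tau}^{k,\pm}\left(\xhat,\rhat,f\circ\mf{\tau}\right)$.

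Finally, the indicator function is handled by Lemma \ref{lem:(Renewal-radii)} \eqref{enu:lem:(Renewal-radii) covariance}: a routine chain-rule calculation gives $\R\left(\xplain,m\right)=\zrall{\mf{\tau}^{\prime}}{\xhat}\R\left(\xhat,0\right)$ and $\R\left(\xplain,m+1\right)=\zrall{\mf{\tau}^{\prime}}{\xhat}\R\left(\xhat,1\right)$, so $\chunk{\R\left(\xplain,m+1\right)}{\R\left(\xplain,m\right)}\left(\rplain\right)=\chunk{\R\left(\xhat,1\right)}{\R\left(\xhat,0\right)}\left(\rhat/\zrp{\tau}u{\xhat}\right)$, matching the indicator inside $\hw_{\tau}^{k,\pm}$. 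The main bookkeeping obstacle is simultaneous tracking of the scale-$\zr{\tau}u$ factors (from $\rplain^{\zD-k}$, from curvature covariance, and from Perron-Frobenius) together with the normal-direction change $\hat n\mapsto\left(\mf{\tau}^{\prime}\zhat\right)^{\orth}\hat n$, which must align precisely with $f\circ\mf{\tau}$ so that no residual orthogonal factor survives.
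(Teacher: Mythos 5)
Your proposal follows essentially the same route as the paper's proof: localize the curvature using Lemma \ref{lem:(Renewal-radii)} \eqref{enu:lem:(Renewal-radii) 4 intersection mJ}, replace $\mJ_{\rplain}$ by $\left(\mf{\tau}\mJ\right)_{\rplain}$ via \eqref{eq:Ck locality}, apply the scaling covariance \eqref{eq:Ck scaling} to the similarity relating $\left(\mf{\tau}\mJ\right)_{\rplain}$ to $\left(\bp{\tau}u\mJ\right)_{\rhat}$, transform the two tent functions via Lemma \ref{lem:(neighborhood-net)} \eqref{enu:lem:(neighborhood-net) Covariance-w.r.t-the distortion}, bring in the Perron-Frobenius identity for the $\mmc$-integral, and track the indicator through the covariance of $\R$. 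The bookkeeping of $\zr{\tau}u$ powers is correct.

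One point deserves a correction. You claim that the substitution $\ex z=T\ex{\zhat}$ (with $T=\mf{\tau}\circ\bp{\tau}u^{-1}$, the affine similarity $y\mapsto\mf{\tau}u+\mf{\tau}^{\prime}u\,y$) is ``equivalently'' $\left(z,n\right)=\left(\mf{\tau}\zhat,\left(\mf{\tau}^{\prime}\zhat\right)^{\orth}\hat{n}\right)$, i.e.\ $\ex z=\mf{\tau}\ex{\zhat}$. These are not the same map: $T\ex{\zhat}=\mf{\tau}\left(\bp{\tau}u^{-1}\ex{\zhat}\right)$, which equals $\mf{\tau}\ex{\zhat}$ only when $\bp{\tau}u=\id$. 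Consequently, $f\left(T\ex{\zhat}\right)$ is not $\left(f\circ\mf{\tau}\right)\left(\ex{\zhat}\right)$, and the curvature term you obtain, $dC_{k}^{\pm}\bigl(\left(\bp{\tau}u\mJ\right)_{\rhat},\ex{\zhat}\bigr)$, differs by a relabeling from the one in the definition \eqref{eq:DEF hw finite-word}, which carries $\bp{\tau}u\ex z$ as the second argument. The paper resolves this by taking the rename to be $\ex z=\mf{\tau}\ex{\zhat}$, so that $f\left(\ex z\right)=\left(f\circ\mf{\tau}\right)\left(\ex{\zhat}\right)$ comes for free, and then applies the scaling to the similarity $\bp{\tau}u\circ\mf{\tau}^{-1}=T^{-1}$ (ratio $\zr{\tau}u^{-1}$), producing $\zr{\tau}u^{k}\,dC_{k}^{\pm}\bigl(\left(\bp{\tau}u\mJ\right)_{\rhat},\bp{\tau}u\ex{\zhat}\bigr)$, which matches \eqref{eq:DEF hw finite-word} verbatim. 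Your route with the $T$-substitution is salvageable if you relabel the dummy variable by $\bp{\tau}u$ afterwards, but the stated equivalence is false and should be repaired so that the test function, the tent-function argument, and the curvature-measure argument all use the same dummy. Also note the minor typo $\zrall{\mf{\tau}^{\prime}}{\xhat}$, which should read $\zr{\tau}{\xhat}$.
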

\begin{proof}
\global\long\def\rfac{s}
Abbreviate $\hr\assign\hr_{\chunkh{\R\left(\xplain,m+1\right)}{\R\left(\xplain,m\right)}}^{k,\pm}\left(\xplain,\rplain,f\right)$.
For clarity, note that \[
\xplain=\mf{\tau}\xhat,\, m=\left|\tau\right|,\,\rplain=\rhat\zr{\xplain|m}u.\]
In the first half of the proof, we will rewrite $\hr$'s curvature
density $\hi_{m}\left(\id,\xplain,\zplain,\rplain\right)$ in terms
of the new variables\begin{eqnarray*}
\yhat\assign\mf{\tau}^{-1}\yplain, & \rhat=\rplain/\zr{\tau}u, & \ex{\zhat}=\mf{\tau}^{-1}\ex{\zplain}.\end{eqnarray*}
They will later leave the measure $\rplain^{-1}d\rplain$ unchanged.
This means $\xplain$, $\yplain$, $\ex{\zplain}$, and $\rplain$
are preimaged $m$ times under the IFS.

$\R\left(\xplain,m\right)$ was defined to be covariant: \[
1\left\{ \R\left(\xplain,m+1\right)<\rplain\leq\R\left(\xplain,m\right)\right\} =1\left\{ \zr{\tau}{\ms^{m}\xplain}\R\left(\ms^{m}\xplain,1\right)<\rhat\zr{\tau}u\leq\zr{\tau}{\ms^{m}\xplain}\R\left(\ms^{m}\xplain,0\right)\right\} \]
 \begin{equation}
=1\left\{ \R\left(\xhat,1\right)<\rhat\frac{\zr{\tau}u}{\zr{\tau}{\xhat}}\leq\R\left(\xhat,0\right)\right\} =1\left\{ \R\left(\xhat,1\right)<\frac{\rhat}{\zrp{\tau}u{\xhat}}\leq\R\left(\xhat,0\right)\right\} \label{eq:transformed R}\end{equation}
 In the denominator of $\hi_{m}\left(\id,\xplain,\zplain,\rplain\right)$,
Lemma \ref{lem:(neighborhood-net)} \eqref{enu:lem:(neighborhood-net) ball chain}
tells us \[
\yplain|m=\xplain|m.\]
Therefore, we can transform both instances of $\Along$ with the same
$\tau$, see \eqref{eq:lem:(neighborhood-net) Covariance-w.r.t-the distortion}
in the same lemma: \begin{eqnarray}
\A{\xplain}{\zplain}{\rplain}=\A{\xplain}{\mf{\tau}\zhat}{\rplain} & = & \A{\bp{\tau}u\xhat}{\bp{\tau}u\zhat}{\rhat}\,1\left\{ \tau=\xplain|m\right\} ,\label{eq:transformed Ax}\\
\A{\yplain}{\zplain}{\rplain}=\A{\yplain}{\mf{\tau}\zhat}{\rplain} & = & \A{\bp{\tau}u\yhat}{\bp{\tau}u\zhat}{\rhat}\,1\left\{ \tau=\yplain|m\right\} .\nonumber \end{eqnarray}
(A geometric interpretation: the covariance of $\Along$ was designed
to match that of the curvature.) We integrate the last two lines.
Then the Perron-Frobenius operator of $\mmc$ introduces $d\mmc\circ\ms^{-m}/d\mmc=\zr{\tau}{\yhat}^{\zD}$.
That is absorbed into $\bp{\tau}u$, \begin{eqnarray}
\int\A{\yplain}{\zplain}{\rplain}\, d\mmc\left(\yplain\right) & = & \int\A{\bp{\tau}u\ms^{m}\yplain}{\bp{\tau}u\zhat}{\rhat}1_{\left\{ \tau=\yplain|m\right\} }\, d\mmc\left(\yplain\right)\nonumber \\
 & = & \int\zr{\tau}{\yhat}^{\zD}\A{\bp{\tau}u\yhat}{\bp{\tau}u\zhat}{\rhat}\, d\mmc\left(\yhat\right)\nonumber \\
 & = & \int\zrp{\tau}u{\yhat}^{\zD}\A{\bp{\tau}u\yhat}{\bp{\tau}u\zhat}{\rhat}\, d\mmc\left(\yhat\right)\,\zr{\tau}u^{\zD}.\label{eq:transformed int A y}\end{eqnarray}
The remaining factor of $\hi_{m}\left(\id,\xplain,\zplain,\rplain\right)$
transforms as \[
\rplain^{\zD-k}=\rhat^{\zD-k}\zr{\tau}u^{\zD}\zr{\tau}u^{-k}.\]
 We assemble both factors above and \eqref{eq:transformed R}, \eqref{eq:transformed Ax},
\eqref{eq:transformed int A y} to obtain \begin{equation}
\zr{\tau}u^{k}\,\hi_{m}\left(\id,\xplain,\zplain,\rplain\right)=\hi_{0}\left(\bp{\tau}u,\xhat,\zhat,\rhat\right).\label{eq: integrand of hr in new variables}\end{equation}

In the second half of the proof, we will make use of the locality
\eqref{eq:Ck locality} and scaling \eqref{eq:Ck scaling} properties
of curvature measures to perform the remaining substitution $\ex{\zplain}\mapsto\bp{\tau}u\ex{\zhat}$.
First, we rename $\ex{\zplain}$ to $\mf{\tau}\ex{\zhat}$, \[
\hr=\int\hi_{m}\left(\id,\xplain,\zplain,\rplain\right)f\left(\mf{\tau}\ex{\zhat}\right)\, dC_{k}^{\pm}\left(\mJ_{\rplain},\mf{\tau}\ex{\zhat}\right),\]
To localize, only a small area around $\xplain$ supports $\zplain\mapsto\hi_{m}\left(\id,\xplain,\zplain,\rplain\right)$.
This is because $\Along$ lives on $\Int B\left(\xplain,a\R\left(\xplain,m\right)\right)$
(Lemma \ref{lem:(neighborhood-net)} \eqref{enu:lem:(neighborhood-net) Locality}),
which is disjoint from $\left(\mf{v|m}\mJ\right)_{\rplain}$ whenever
$\tau\assign\xplain|m\neq v|m\in\mI^{m}$ (Lemma \ref{lem:(Renewal-radii)}
\eqref{enu:lem:(Renewal-radii) 4 intersection mJ}). Therefore, we
may replace $\mJ_{\rplain}$ with $\left(\mf{\tau}\mJ\right)_{\rplain}$
inside the curvature measure:\[
\hr=\int\hi_{m}\left(\id,\xplain,\zplain,\rplain\right)f\left(\mf{\tau}\ex{\zhat}\right)\, dC_{k}^{\pm}\left(\left(\mf{\tau}\mJ\right)_{\rplain},\mf{\tau}\ex{\zhat}\right).\]
 Since $\bp{\tau}u\circ\mf{\tau}^{-1}$ is a similarity with Lipschitz
constant $\zr{\tau}u^{-1}$, it maps parallel sets to parallel sets,
\[
\bp{\tau}u\circ\mf{\tau}^{-1}\left(\left(\mf{\tau}\mJ\right)_{\rplain}\right)=\left(\bp{\tau}u\mJ\right)_{\rplain/\zr{\tau}u}=\left(\bp{\tau}u\mJ\right)_{\rhat}.\]
We may introduce the similarity via the scaling property \eqref{eq:Ck scaling}
of curvature measures, \begin{eqnarray*}
\hr & = & \zr{\tau}u^{k}\int\hi_{m}\left(\id,\xplain,\zplain,\rplain\right)f\left(\mf{\tau}\ex{\zhat}\right)\, dC_{k}^{\pm}\left(\bp{\tau}u\circ\mf{\tau}^{-1}\left(\left(\mf{\tau}\mJ\right)_{\rplain}\right),\bp{\tau}u\circ\mf{\tau}^{-1}\left(\mf{\tau}\ex{\zhat}\right)\right)\\
 & = & \zr{\tau}u^{k}\int\hi_{m}\left(\id,\xplain,\zplain,\rplain\right)f\circ\mf{\tau}\left(\ex{\zhat}\right)\, dC_{k}^{\pm}\left(\left(\bp{\tau}u\mJ\right)_{\rhat},\bp{\tau}u\ex{\zhat}\right).\end{eqnarray*}
(Scaling also proves $C_{k}^{\pm}\left(\left(\bp{\tau}u\mJ\right)_{\rhat},\cdot\right)$
is measurable as a function of $\rhat$ on the set where we need it,
see Remark \ref{rem:measurability}) This combines with \eqref{eq: integrand of hr in new variables}
to complete the proof.
\end{proof}

\subsection{Curvature of distorted images converges\label{sub:Curvature-of-distorted}}

In the last section, we compared $\mJ_{\ze}$ to a (locally) fixed
range of parallel sets of the distorted fractal $\bp{\widetilde{\omega|n}}u\mJ$.
Unlike the self-similar case, we cannot eliminate the distortion $\bp{\widetilde{\omega|n}}u$
altogether. But this map converges as $n\ra\infty$ and depends asymptotically
only on the {}``dynamical'' variable $\left(\omega,u\right)\in\mIN\times\mJ$.
In this section, we prove the associated {}``chunk of curvature''
$\hw_{x|n}^{\left(k,\pm\right)}$ converges, too.

The next continuity result will be applied to $\myset^{n}\assign\bp{\widetilde{\omega|n}}u\mJ$
and $\myset^{\infty}\assign\bplim{\omega}u\mJ$ for $u\in\mJ$, $\omega\in\mIN$.
If we had a corresponding result about the variations $C_{k}^{\pm}$,
our main theorem would hold for those, too (Remark \ref{rem:sharp}).
It seems difficult to quantify the rate of convergence unless the
reach can be controlled, compare \cite[Corollary 2, p. 384]{CohenMorvan06SecondFundamentalMeasureApproxCurvatureMR2269782}
in the smooth setting.
\begin{fact}
\label{fac:continuity of curvature}Let $\myset^{\infty}$, $\myset^{n}\subseteq\Rd$
be a sequence of nonempty, compact sets, and suppose $\myset^{n}$
converges to $\myset^{\infty}$ in the Hausdorff metric, i.e., \[
\lim_{n\ra\infty}\inf\left\{ \ze>0\,:\,\myset_{\ze}^{n}\supseteq\myset^{\infty}\,\text{and}\,\myset^{n}\subseteq\myset_{\ze}^{\infty}\right\} =0.\]

\begin{enumerate}
\item \label{enu:fac:continuity of curvature first-order}Let $k\in\left\{ d-1,d\right\} $.
For all $\ze>0$ except possibly a countable set, \[
\op{wlim}_{n\ra\infty}C_{k}\left(\widetilde{\myset_{\ze}^{n}},\,\cdot\times S^{d-1}\right)=C_{k}\left(\widetilde{\myset_{\ze}^{\infty}},\,\cdot\times S^{d-1}\right).\]

\item \label{enu:fac:continuity of curvature  second-order}Let $k\in\left\{ 0,\dots,d-1\right\} $,
and assume $\widetilde{\myset_{\ze}^{\infty}}$ has positive reach
for a fixed $\ze>0$. Then $\widetilde{\myset_{\ze}^{n}}$ has uniformly
positive reach for any sufficiently large $n$, and \[
\op{wlim}_{n\ra\infty}C_{k}\left(\widetilde{\myset_{\ze}^{n}},\,\cdot\right)=C_{k}\left(\widetilde{\myset_{\ze}^{\infty}},\,\cdot\right).\]

\end{enumerate}
\end{fact}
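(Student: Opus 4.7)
The plan is to reduce both assertions to the standard continuity theorems for Lipschitz-Killing measures on classes of sets where they are known, after first transferring Hausdorff convergence of $K^n\to K^\infty$ to the corresponding closed complements of parallel sets. The key elementary input is that Hausdorff convergence is stable under the parallel-set operation: if $K^n\to K^\infty$ in the Hausdorff metric, then $K^n_\ze\to K^\infty_\ze$ and $\widetilde{K^n_\ze}\to\widetilde{K^\infty_\ze}$ in the Hausdorff metric as well, uniformly in $\ze$ on compact ranges. This is immediate from $|\dist(x,K^n)-\dist(x,K^\infty)|\leq d_H(K^n,K^\infty)$.

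For part \eqref{enu:fac:continuity of curvature first-order}, the volume case $k=d$ is classical: the map $\ze\mapsto\mathcal{L}^d(K^\infty_\ze)$ is non-decreasing and hence continuous except at countably many $\ze$, and at each continuity point the boundary $\partial K^\infty_\ze$ is a $\mathcal{L}^d$-null set. Combining this with the Hausdorff convergence of $K^n_\ze$ to $K^\infty_\ze$ gives $\mathcal{L}^d(K^n_\ze\cap A)\to\mathcal{L}^d(K^\infty_\ze\cap A)$ for all Borel $A$ whose boundary is also $\mathcal{L}^\infty_\ze$-null, which suffices for weak convergence. For the surface case $k=d-1$, I would appeal to the Rataj--Winter continuity results for surface measures of parallel sets \cite{RatajWinter09MeasuresOfParallelSetsArxiv09053279,RatajWinter11MinkowskiMeasurabilitySurface1111.1825}: the non-osculating condition of Assumption \ref{ass:Regularity-of-parallel-sets} (\ref{enu:ass:Regularity-of-parallel-sets 2 osculating}) fails only at a Lebesgue-null set of $\ze$, at regular $\ze$ the surface measure coincides with $\frac12\mathcal{H}^{d-1}\llcorner\partial K^\infty_\ze$, and this is weakly continuous under Hausdorff convergence at all but countably many $\ze$.

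For part \eqref{enu:fac:continuity of curvature  second-order}, the essential step is a lower semicontinuity property of reach under Hausdorff perturbations of the generating set $K$: if $\op{reach}\widetilde{K^\infty_\ze}\geq s>0$, then for every $s'<s$ there exists $N$ with $\op{reach}\widetilde{K^n_\ze}\geq s'$ for all $n\geq N$. This follows by a standard contradiction argument: a failure would produce points in $\widetilde{K^n_\ze}$ admitting two nearest neighbors at distance less than $s'$, and passing to a Hausdorff-convergent subsequence yields a corresponding pair of nearest neighbors in $\widetilde{K^\infty_\ze}$ at the same distance, contradicting $\op{reach}\widetilde{K^\infty_\ze}\geq s$. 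Given uniformly positive reach, Federer's continuity theorem for curvature measures of sets of positive reach \cite{Federer59MR0110078}, together with the $C_k$-direction version established in \cite{Zaehle86IntRepMR849863,RatajZaehle01MR1846894}, delivers the weak convergence $C_k(\widetilde{K^n_\ze},\cdot)\to C_k(\widetilde{K^\infty_\ze},\cdot)$.

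The main obstacle is the reach stability step in part \eqref{enu:fac:continuity of curvature  second-order}: reach is not in general upper semicontinuous (small perturbations can create new nearest-point ambiguities), so one really needs the lower bound argument above, and one must be careful that the pair of nearest neighbors obtained in the limit does not degenerate, i.e. that the two foot points remain at positive distance. This is handled by extracting the direction of the almost-equal normals along the subsequence, using compactness of $S^{d-1}$, and observing that osculating-ball configurations are closed under Hausdorff limits of the ambient sets at a fixed parallel-set width.
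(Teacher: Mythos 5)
Your overall strategy matches the paper's: reduce everything to known continuity theorems for curvature/parallel-set measures, with the one nontrivial ingredient being stability of reach under Hausdorff convergence of $\widetilde{\myset_{\ze}^{n}}$. The paper handles that ingredient by citation (it attributes $\liminf_{n}\op{reach}\widetilde{\myset_{\ze}^{n}}>0$ to \cite[Theorem 5.2]{RatajSpodarevMeschenmoser09WienerSausageCurvatureMR2569809}), and then passes through flat-seminorm convergence of normal currents \cite[Theorem 3.1]{RatajZaehle01MR1846894} and the current representation of $C_k$ \cite[(18)]{Zaehle86IntRepMR849863}. Your route via Federer's continuity theorem plus a directional extension is an acceptable alternative at that last step; part \eqref{enu:fac:continuity of curvature first-order} is also handled fine (the paper goes through Stacho and \cite{RatajSpodarevMeschenmoser09WienerSausageCurvatureMR2569809,RatajWinter09MeasuresOfParallelSetsArxiv09053279}).

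The genuine gap is in your direct attempt at reach stability. Your contradiction argument produces, for each $n$, a point $x_{n}$ with $\op{dist}(x_{n},\widetilde{\myset_{\ze}^{n}})=\delta_{n}<s'$ having two nearest points $a_{n}\neq b_{n}$. But reach $<s'$ only guarantees a failure at some scale strictly less than $s'$, so nothing forces $\delta_{n}$ or $|a_{n}-b_{n}|$ to stay bounded away from zero; in fact when $\op{reach}\widetilde{\myset_{\ze}^{n}}\to0$ the generic picture is $\delta_{n}\to0$ and $a_{n},b_{n},x_{n}$ all collapsing to a single point $a\in\widetilde{\myset_{\ze}^{\infty}}$. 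In that degenerate limit you have neither two distinct nearest points of $x$, nor an obvious violation of the quantitative reach inequality $\langle b-a,u\rangle\leq|b-a|^{2}/(2s)$: the quantity $\langle u_{n},b_{n}-a_{n}\rangle=|a_{n}-b_{n}|^{2}/(2\delta_{n})$ does witness $\delta_{n}<s'$, but both sides vanish, so the inequality does not transfer. Your proposed fix --- extract limiting normal directions on $S^{d-1}$ and invoke closedness of ``osculating-ball configurations'' --- is exactly where the real work lives, and as stated it is not a proof: two distinct limiting normals at the single base point $a$ is perfectly compatible with positive reach (think of a corner of a convex body), so no contradiction follows without using the specific parallel-set structure (e.g.\ that $\widetilde{\myset_{\ze}^{n}}$ is a sublevel set of $-\op{dist}(\cdot,\myset^{n})$ and that the limiting distance function has a regular value at $\ze$). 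That structure is what the cited work exploits, and it is why the paper defers to it rather than arguing from scratch.
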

\begin{proof}
The first assertion is due to Stacho (\cite[Theorem 3]{Stacho76VolumeParallelSetsMR0442202}).
It was put in the present form by \cite[Theorem 5.1]{RatajSpodarevMeschenmoser09WienerSausageCurvatureMR2569809},
\cite[Corollary 2.5]{RatajWinter09MeasuresOfParallelSetsArxiv09053279}.
The second assertion can be found in \cite[Theorem 5.2]{RatajSpodarevMeschenmoser09WienerSausageCurvatureMR2569809}
for (non-directional) Federer curvatures only. The same work shows
$\liminf_{n\ra\infty}\op{reach}\widetilde{\myset_{\ze}^{n}}>0$. This
is enough for the normal current of $\widetilde{\myset_{\ze}^{n}}$
to converge in the flat semi-norms (\cite[Theorem 3.1]{RatajZaehle01MR1846894}).
Since Lipschitz-Killing curvature has a current representation (\cite[(18)]{Zaehle86IntRepMR849863}),
it converges weakly.
\end{proof}
Recall $\hi$ \eqref{eq: DEF hi} is a template for the integrands
of $\hw_{\tau}^{k,\pm}\left(x,\ze,f\right)$ \eqref{eq: hw in terms of hi}
for finite $\tau\in\mI^{*}$ and $\hw_{\widetilde{\omega}}^{k}\left(x,\ze,f\right)$
\eqref{eq:DEF hw limiting word} for infinite $\omega\in\mIN$, \[
\hi_{m}\left(\bpsym,x,z,\ze\right)=\chunk{\R\left(x,m+1\right)}{\R\left(x,m\right)}\left(\frac{\ze}{\left|\bpsym^{\prime}x\right|}\right)\,\frac{\ze^{\zD-k}\:\A{\bpsym x}{\bpsym z}{\ze}}{\int\left|\bpsym^{\prime}y\right|^{\zD}\A{\bpsym y}{\bpsym z}{\ze}\, d\mmc\left(y\right)}.\]
 
\begin{lem}
\label{lem:hw-converges} Let $\left(\omega,x\right)\in\mIN\times\mJ$,
$\ex z\in\bV\times S^{d-1}$, fix a reference point $u\in\bV$, and
assume $\ze>0$ is regular in the sense of Assumption \ref{ass:Regularity-of-parallel-sets}.
\begin{enumerate}
\item \label{enu:lem:hw-converges weak-limit hw}As a weak limit of signed
measures, \begin{equation}
\hw_{\widetilde{\omega}}^{k}\left(x,\ze,\cdot\right)=\op{wlim}\limits _{n\ra\infty}\hw_{\widetilde{\omega|n}}^{k}\left(x,\ze,\cdot\right).\label{eq:def hw plim}\end{equation}

\item \label{enu:lem:hw-converges integrand hi}The limit \[
\hi_{0}\left(\bplim{\omega}u,x,z,\ze\right)=\lim_{n\ra\infty}\hi_{0}\left(\bp{\widetilde{\omega|n}}u,x,z,\ze\right)\]
 converges uniformly in $z$ (possibly except at two values of $\ze$).
There is even a constant $c_{\hi}>0$ such that it converges at the
universal rate \[
c_{\hi}\left(1+\dist\left(\ze\zrplim{\omega}ux^{-1},\left\{ \R\left(x,1\right),\R\left(x,0\right)\right\} \right)^{-1}\right)\ze^{-k-1}\rmax^{n\holder}.\]

\end{enumerate}
\end{lem}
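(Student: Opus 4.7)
The plan is to prove the uniform integrand convergence of part \eqref{enu:lem:hw-converges integrand hi} first, and then combine it with the standard continuity of curvature measures (Fact \ref{fac:continuity of curvature}) to deduce the weak-measure convergence of part \eqref{enu:lem:hw-converges weak-limit hw}.

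Write $\bpsym_{n}\assign\bp{\widetilde{\omega|n}}u$ and $\bpsym_{\infty}\assign\bplim{\omega}u$, so that by Proposition \ref{pro:(Distortion-converges)}\eqref{enu:pro:(Distortion-converges) 1 Both--converge} both $\bpsym_{n}-\bpsym_{\infty}$ and $\bpsym_{n}^{\prime}-\bpsym_{\infty}^{\prime}$ are bounded by $c_{\bpsym}\rmax^{n\holder}$ uniformly on $\overline{\bV}$. In the product formula \eqref{eq: DEF hi} I will treat the three factors of $\hi_{0}$ separately. The numerator $\ze^{\zD-k}\A{\bpsym x}{\bpsym z}{\ze}$ is $\ze^{-1}$-Lipschitz in its first two arguments by Lemma \ref{lem:(neighborhood-net)}\eqref{enu:lem:(neighborhood-net) Lipschitz-continuity}, yielding a difference of order $\ze^{\zD-k-1}\rmax^{n\holder}$ uniformly in $z$. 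The indicator $\chunk{\R\left(x,1\right)}{\R\left(x,0\right)}\left(\ze/\left|\bpsym_{n}^{\prime}x\right|\right)$ agrees with its $\bpsym_{\infty}$-counterpart once $c_{\bpsym}\rmax^{n\holder}$ is smaller than a constant times $\dist\left(\ze\zrplim{\omega}ux^{-1},\left\{\R\left(x,0\right),\R\left(x,1\right)\right\}\right)$, which is the source both of the factor $(1+\dist(\cdots)^{-1})$ in the asserted rate and of the two exceptional values of $\ze$ (the jump points of the limiting indicator). The denominator $\int\left|\bpsym^{\prime}y\right|^{\zD}\A{\bpsym y}{\bpsym z}{\ze}\,d\mmc(y)$ is handled similarly: its difference is controlled under the integral by the same Lipschitz estimate combined with the distortion bound \eqref{eq: abs psi prime}, and a uniform lower bound on the denominator of order $\ze^{\zD}$ comes from extending Lemma \ref{lem:(neighborhood-net)}\eqref{enu:lem:(neighborhood-net) D-set-property} from finite words $\tau$ to the limit $\bpsym_{\infty}$. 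That proof carries over verbatim, since it uses only the $\mK_{\bpsym}$-bounded distortion of Proposition \ref{pro:(Distortion-converges)}\eqref{enu: pro:(Distortion-converges) 2 BDP}. Dividing the numerator difference by the lower bound on the denominator then establishes part \eqref{enu:lem:hw-converges integrand hi} with the stated rate.

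For part \eqref{enu:lem:hw-converges weak-limit hw}, the sets $\bpsym_{n}\mJ$ converge to $\bpsym_{\infty}\mJ$ in the Hausdorff metric by the same proposition, and Assumption \ref{ass:Regularity-of-parallel-sets}\eqref{enu:ass:Regularity-of-parallel-sets 3 reach distorted} supplies $\op{reach}\widetilde{\left(\bpsym_{\infty}\mJ\right)_{\ze}}>0$ at the chosen regular $\ze$. Fact \ref{fac:continuity of curvature}\eqref{enu:fac:continuity of curvature  second-order}, together with the sign convention \eqref{eq:Ck mirrored}, therefore yields the weak convergence $C_{k}\left(\left(\bpsym_{n}\mJ\right)_{\ze},\cdot\right)\to C_{k}\left(\left(\bpsym_{\infty}\mJ\right)_{\ze},\cdot\right)$. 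Paired with the uniform convergence of the continuous integrand weights $\hi_{0}\left(\bpsym_{n},x,\cdot,\ze\right)\to\hi_{0}\left(\bpsym_{\infty},x,\cdot,\ze\right)$ from part \eqref{enu:lem:hw-converges integrand hi} on the compact support of the measures, the standard pairing of uniformly converging continuous weights against weakly converging signed measures produces $\hw_{\widetilde{\omega|n}}^{k}\left(x,\ze,\cdot\right)\to\hw_{\widetilde{\omega}}^{k}\left(x,\ze,\cdot\right)$ as required. The hardest step will be simultaneously tracking the $\dist(\cdots)^{-1}$ factor at the jumps of the indicator and verifying that the $\zD$-set property of $\Along$ extends to the limiting, genuinely non-similarity map $\bpsym_{\infty}$ with a uniform constant; once these two technicalities are settled the remaining estimates are routine applications of bounded distortion and Lipschitz continuity.
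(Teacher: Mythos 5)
Your proposal is correct and follows essentially the same route as the paper: prove uniform convergence of $\hi_{0}$ factor by factor (Lipschitz bound on $\Along$, jump set of the indicator producing the two exceptional $\ze$ and the $\dist(\cdot)^{-1}$ blow-up, lower bound on the denominator via the $\zD$-set property), then deduce weak convergence of $\hw$ by pairing uniformly convergent integrands against weakly convergent curvature measures using Hausdorff convergence of the distorted sets, Assumption \ref{ass:Regularity-of-parallel-sets}\eqref{enu:ass:Regularity-of-parallel-sets 3 reach distorted}, and Fact \ref{fac:continuity of curvature}\eqref{enu:fac:continuity of curvature  second-order}. Two small remarks: the paper only ever invokes Lemma \ref{lem:(neighborhood-net)}\eqref{enu:lem:(neighborhood-net) D-set-property} for finite words $\widetilde{\omega|n}$ (the bound for $\bpsym_\infty$ then follows by passing to the limit), so your explicit extension step is not actually needed; and in the last step the relevant measure $\mu_n(d\ex z) := C_k\left(\left(\bp{\widetilde{\omega|n}}u\mJ\right)_{\ze}, d\,\bp{\widetilde{\omega|n}}u\ex z\right)$ is the curvature measure pulled back by $\bp{\widetilde{\omega|n}}u$, not $C_k\left(\left(\bp{\widetilde{\omega|n}}u\mJ\right)_{\ze}, \cdot\right)$ itself — one must also use the equicontinuous uniform convergence of $\bp{\widetilde{\omega|n}}u$ (and its inverse) to push the Fact \ref{fac:continuity of curvature} conclusion through the change of variables; this is implicit in your ``standard pairing'' but worth stating, and the paper records it parenthetically.
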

\begin{proof}
First, we will prove $\hi$ converges at the desired rate. It is enough
to know every factor and its denominator are bounded and converge
at such a rate (Fact \ref{fac:rate of convergence}). Both $\bp{\widetilde{\omega|n}}u^{\prime}x$
and $\bp{\widetilde{\omega|n}}ux$ converge at the rate $c_{\bpsym}\rmax^{n\holder}$
(Proposition \ref{pro:(Distortion-converges)} item \eqref{enu:pro:(Distortion-converges) 1 Both--converge}).
The factor \[
\chunk{\R\left(x,1\right)}{\R\left(x,0\right)}\left(\ze\zrp{\widetilde{\omega|n}}ux^{-1}\right)\:\,\ze^{-k}\]
is bounded by $\ze^{-k}$. Only at the endpoints of the interval can
it be discontinuous. To see how far its argument can stray, \[
\left|\ze\zrp{\widetilde{\omega|n}}ux^{-1}-\ze\zrplim{\omega}ux^{-1}\right|=\ze\zrplim{\omega}ux^{-1}\zrp{\widetilde{\omega|n}}ux^{-1}\,\left|\zrplim{\omega}ux-\zrp{\widetilde{\omega|n}}ux\right|\leq\ze\frac{c_{\bpsym}}{\mK^{2}}\rmax^{n\holder}.\]
 The bound $\ze^{-k}$ originating from the supremum norm at the worst
$n$ determines its overall rate $\frac{c_{\bpsym}\ze}{\mK^{2}\delta\ze^{k}}\rmax^{n\holder}$,
where $\delta$ is the distance from $\ze\zrp{\widetilde{\omega|n}}ux^{-1}$
to the nearest endpoint (which can be estimated by that of $\ze\zrplim{\omega}ux^{-1}$). 

The next factor $\A{\bp{\widetilde{\omega|n}}ux}{\bp{\widetilde{\omega|n}}uz}{\ze}$
is trivially bounded by $1$, and converges at a rate $\ze^{-1}c_{\bpsym}\rmax^{n\holder}$
due to $\op{Lip}\left(\Along\right)\leq\ze^{-1}$ (Lemma \ref{lem:(neighborhood-net)}
\eqref{enu:lem:(neighborhood-net) Lipschitz-continuity}). The estimate
(Lemma \ref{lem:(neighborhood-net)} \eqref{enu:lem:(neighborhood-net) D-set-property})
\[
c_{\bpsym}^{-1}C_{\Along}^{-1}\leq\frac{\int\left|\bpsym^{\prime}\right|^{D}\A{\bp{\widetilde{\omega|n}}ux}{\bp{\widetilde{\omega|n}}uz}{\ze}d\mmc\left(y\right)}{\ze^{\zD}}\leq c_{\bpsym}C_{\Along}\]
 makes sure we can divide without losing rate of convergence. Straightforward
reasoning treats the last formula's numerator. Convergence of $\hi_{0}\left(\bp{\widetilde{\omega|n}}u,x,z,\ze\right)$
is proved.

In preparation for the first item, let $\mu_{n}\ra\mu_{\infty}$ be
any sequence of weakly converging measures, and $g_{n}\ra g_{\infty}$
norm converging, continuous functions. Note $\sup_{m}\left\Vert \mu_{m}\right\Vert <\infty$
due to Banach-Steinhaus. Then \begin{equation}
\left|\mu_{n}\left(g_{n}\right)-\mu_{\infty}\left(g_{\infty}\right)\right|\leq\left|\left(\mu_{n}-\mu_{\infty}\right)\left(g_{\infty}\right)\right|+\sup_{m}\left\Vert \mu_{m}\right\Vert \left\Vert g_{n}-g_{\infty}\right\Vert _{\infty}\label{eq:duality separates convergence}\end{equation}
 permits us to treat the functions and measures separately. Back to
$\mu_{n}\assign\hw_{\widetilde{\omega|n}}^{k}\left(x,\ze,f\right)$,
we have proved its integrand $g_{n}\left(\ex z\right)\assign\hi_{0}\left(\bp{\widetilde{\omega|n}}u,x,z,\ze\right)f\left(\ex z\right)$
converges uniformly. Positive reach of $\left(\bplim{\omega}u\mJ\right)_{\ze}$
(Assumption \ref{ass:Regularity-of-parallel-sets} \eqref{enu:ass:Regularity-of-parallel-sets 3 reach distorted})
and Fact \ref{fac:continuity of curvature} (and equicontinuously
uniform convergence of $\bp{\widetilde{\omega|n}}u$ inside $d\ex z$)
confirm the curvature measure $\mu_{n}\left(d\ex z\right)\assign C_{k}\left(\left(\bp{\widetilde{\omega|n}}u\mJ\right)_{\ze},d\bp{\widetilde{\omega|n}}u\ex z\right)$
converges. (The point-wise limit $\hw_{\widetilde{\omega}}\left(x,\ze,f\right)$
remains measurable as a function of $\omega,x,\ze$, see Remark \ref{rem:measurability}).
\end{proof}

\subsection{Sufficient conditions for the uniform integrability assumption}

Our main result depends on a uniform integrability assumption. We
can always prove it for $C_{d}$ (Minkowski content) and $C_{d-1}$
(surface content) since these contain no principal curvatures. More
generally, Lemma \ref{lem:Zygmund space condition for uniform integrability}
below carefully estimates a dominating function. Its resulting order
of integrability is necessary in the self-similar case. The converse
dominated ergodic theorem \cite[Theorem 3.1.16]{Petersen89ErgodicTheoryMR1073173}
shows this when applied to \cite[(3.4.1)]{BohlZaehle12CurvatureDirectionMeasures1111.4457}
and the suspension flow.

Uniform integrability singles out $\mmbc$ and $\mmbi$ as the geometrically
{}``correct'' limit measures. Recall their thermodynamic potential
is the contraction ratio $x\mapsto-\zD\log\mf{x_{1}}\ms x$.\comment{V-variable fractals require a modified potential.}
Measures generated by arbitrary potentials are studied in multifractal
formalism \cite{Patzschke97ConformalMultifractalMR1479016}. At the
price of an extra multiplier, these could replace $\mmbc$ and $\mmbi$
up to here. But different potentials make mutually singular measures.
So fractal curvature can have a density with respect to at most one
of them.
\begin{lem}
\label{lem:Zygmund space condition for uniform integrability}The
uniform integrability assumption \eqref{eq:uniformly integrand Thm}
of the theorem holds if the function \begin{equation}
h\left(\ze,\omega,x\right)=\ze^{-k}\sup_{M>0}\frac{1}{M}\sum_{m=0}^{M-1}C_{k}^{\var}\left(\left(\bp{\widetilde{\omega\vert m}}u\mJ\right)_{\ze},\bp{\widetilde{\omega\vert m}}u\Int B\left(x,a\ze\right)\right)\,\chunk{\R\left(x,1\right)}{\R\left(x,0\right)}\left(\frac{\ze}{\zrp{\widetilde{\omega\vert m}}ux}\right)\label{eq:Zygmund space integrand}\end{equation}
 is in the Zygmund space, i.e., \[
\int_{\mIN\times\mJ}\int_{0}^{\infty}\max\left\{ 0,\left\vert h\left(\ze,\omega,x\right)\right\vert \ln\left\vert h\left(\ze,\omega,x\right)\right\vert \right\} \,\frac{d\ze}{\ze}d\mmbi\left(\omega,x\right)<\infty\]
 and if also \[
\int_{\mIN\times\mJ}\int_{0}^{\infty}\sup_{0<\Rvar\leq\frac{\Rstart}{2}}\frac{1_{\chunkh{\max\left\{ \R\left(x,0\right),\Rvar\right\} }{\Rstart}}\left(\ze\right)C_{k}^{\var}\left(\mJ_{\ze},\Int B\left(x,a\ze\right)\right)}{\ln\frac{\Rstart}{\Rvar}\,\ze^{k}}\,\frac{d\ze}{\ze}d\mmbi\left(\omega,x\right)<\infty.\]

\end{lem}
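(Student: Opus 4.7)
The plan is to exhibit a single $\Rvar$-independent, $L^1$-integrable function $G(\omega,x,\ze)$ that pointwise dominates the expression in \eqref{eq:uniformly integrand Thm}. Since pointwise $L^1$ domination of a family implies its uniform integrability, this finishes the proof. The function $G$ splits naturally into two parts, one controlled by each of the lemma's hypotheses.

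First I estimate the $\hr$-part. Using $0 \leq \A{\cdot}{\cdot}{\ze} \leq 1$ with support in $\Int B(x, a\ze)$ (Lemma \ref{lem:(neighborhood-net)}\eqref{enu:lem:(neighborhood-net) Measurability}, \eqref{enu:lem:(neighborhood-net) Locality}) together with the lower $\zD$-set bound $\int \A{y}{z}{\ze}\,d\mmc(y) \geq C_\Along^{-1}\ze^\zD$ (Lemma \ref{lem:(neighborhood-net)}\eqref{enu:lem:(neighborhood-net) D-set-property}), I obtain
\[
\frac{\hr^{k,\var}_{\chunkh{\max\{\R(x,0),\Rvar\}}{\Rstart}}(x,\ze,1)}{\ln(\Rstart/\Rvar)} \;\leq\; C_\Along \cdot \frac{1_{\chunkh{\max\{\R(x,0),\Rvar\}}{\Rstart}}(\ze)\, \ze^{-k}\, C_k^{\var}(\mJ_\ze,\Int B(x,a\ze))}{\ln(\Rstart/\Rvar)},
\]
whose supremum over $\Rvar \in (0,\Rstart/2)$ is $C_\Along$ times the integrand of the second hypothesis, hence lies in $L^1(\ze^{-1}d\ze\,d\mmbi)$.

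Next I estimate each $\hw$-summand. The same geometric bounds applied through the distortion, using \eqref{eq:K bounded distortion} to lower-bound $|\bp{\widetilde{\omega|m}}u'y|^\zD \geq \mK^{-\zD}$ in the denominator integral and Proposition \ref{pro:(Distortion-converges)}\eqref{enu: pro:(Distortion-converges) 2 BDP} to compare $\bp{\widetilde{\omega|m}}u$-images of balls (enlarging the fixed $a > 1$ once at the outset to absorb the bi-Lipschitz constant $\mK_{\bpsym}$), yield
\[
\hw^{k,\var}_{\widetilde{\omega|m}}(x,\ze,1) \;\leq\; C_2\, g_m(\ze,\omega,x),
\]
where $g_m(\ze,\omega,x) := \chunk{\R(x,1)}{\R(x,0)}\!\bigl(\ze/\zrp{\widetilde{\omega|m}}ux\bigr)\, \ze^{-k}\, C_k^{\var}\bigl((\bp{\widetilde{\omega|m}}u\mJ)_\ze,\, \bp{\widetilde{\omega|m}}u\Int B(x,a\ze)\bigr)$ is precisely the summand from the definition \eqref{eq:Zygmund space integrand} of $h$. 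Since $\zr{\widetilde{\omega|m}}u \leq \rmax^m$ and $\ze \leq \Rstart$, the indicator in the $\hw$-sum forces $m < N(\Rvar) := \ln(\Rstart/\Rvar)/|\ln \rmax|$, so the ratio $N(\Rvar)/\ln(\Rstart/\Rvar) \leq 1/|\ln \rmax|$ is uniformly bounded. Hence
\[
\frac{1}{\ln(\Rstart/\Rvar)} \sum_{m}1\{\ze\zr{\widetilde{\omega|m}}u>\Rvar\}\,\hw^{k,\var}_{\widetilde{\omega|m}}(x,\ze,1) \;\leq\; \frac{C_2\, N(\Rvar)}{\ln(\Rstart/\Rvar)} \cdot \frac{1}{N(\Rvar)}\sum_{m=0}^{N(\Rvar)-1} g_m \;\leq\; \frac{C_2}{|\ln \rmax|}\, h(\ze,\omega,x),
\]
the last inequality holding by the very definition of $h$ as the supremum over $M$ of the Cesaro averages.

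Combining Steps 1 and 3 dominates \eqref{eq:uniformly integrand Thm} pointwise by $C(h + \Psi)$, with $\Psi$ the second-hypothesis integrand and $C$ depending only on IFS data. Because $|\bp{\widetilde{\omega|m}}u'x| \in [\mK^{-1},\mK]$ uniformly, $h(\cdot,\omega,x)$ is supported in $\ze \in (\mK^{-1}\R(x,1),\, \mK\R(x,0))$, a set of $\ze^{-1}d\ze\,d\mmbi$-measure $\ln(\mK^2) + \int \ln(\R(x,0)/\R(x,1))\,d\mmbi < \infty$ by Lemma \ref{lem:(Renewal-radii)}\eqref{enu:lem:(Renewal-radii) 6 integrable} and the covariance $\R(x,1) = \zr{x|1}{\ms x}\R(\ms x,0)$. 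On a set of finite measure, the Zygmund bound $\int h\log^+ h < \infty$ upgrades to $h \in L^1$; with $\Psi \in L^1$ from the second hypothesis, $C(h+\Psi)$ is the desired $L^1$ dominator, settling the uniform integrability. The main technical obstacle is the ball-radius alignment in Step 2, where bounded distortion makes the natural bound involve $\Int B(x, a\mK_{\bpsym}\ze)$ rather than the $\Int B(x, a\ze)$ appearing in the hypothesis's $h$; this is resolved by exploiting the freedom to enlarge $a$ once at the outset in Section \ref{sub:Formulas-(dumping-ground)}.
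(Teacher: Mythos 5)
Your proof is correct and follows the same overall strategy as the paper: construct an $\Rvar$-independent $L^1$ dominator by splitting \eqref{eq:uniformly integrand Thm} into an $\hr$-part (controlled by the second hypothesis) and an $\hw$-part (controlled by the Zygmund hypothesis on $h$), using the $\zD$-set bound for $\Along$, locality, and the bound $\mK^{-1}\leq\zrp{\widetilde{\omega|m}}ux\leq\mK$ to match the $\hw$-summands with the Cesaro averages defining $h$. Where you genuinely diverge is in the treatment of the prefactor $\frac{M}{\ln(\Rstart/\Rvar)}$: the paper bounds it by $c_1+c_2|\ln\ze|$ and then invokes the Young inequality for a specially constructed Orlicz pair $(\Psi,\Phi)$ with $\Phi(t)\sim t\ln t$, using the exponential integrability \eqref{eq:logdist is very integrable} to handle $\Psi(|\ln\ze|)$. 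You instead observe that on the support of the integrand ($\Rvar<\ze\leq\R(x,0)\mK\leq\Rstart\mK$, since the $\hw$-indicator forces this) the prefactor is in fact bounded by a $\ze$-independent constant, and then use the elementary fact that $L\log^+L\subseteq L^1$ on a set of finite measure, the finiteness coming from Lemma \ref{lem:(Renewal-radii)}\eqref{enu:lem:(Renewal-radii) 6 integrable} and covariance of $\R$. Both arguments are valid; yours is more economical because it only needs the linear integrability $\int|\ln\R(x,0)|\,d\mmc<\infty$ rather than the exponential form, and avoids the Legendre-transform bookkeeping. Two minor quibbles: your cutoff should be $\ze\leq\Rstart\mK$ rather than $\ze\leq\Rstart$ (which merely changes $N(\Rvar)$ by the bounded additive term $\ln\mK/|\ln\rmax|$, so the constant bound survives), and $N(\Rvar)$ should be rounded up to an integer before comparing to the supremum over integer $M$ in the definition of $h$; neither affects the conclusion.
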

Note $\zrp{\widetilde{\omega\vert m}}xx=1$ simplifies $h$ in case
$u$ is replaced with $x$, see Theorem \ref{thm:Main result}.
\begin{proof}
We prepare to integrate a dominating function, i.e., the supremum
of \eqref{eq:uniformly integrand Thm} over all $\Rvar\in\left(0,\Rstart/2\right]$.
First, we eliminate $\Rvar$ in favor of the summation range $M$
of $m$. Besides $M$, only this subexpression of \eqref{eq:uniformly integrand Thm}
depends on $\Rvar$ explicitly: \[
M\assign\sum_{m\in\mathbb{N}}1\left\{ \ze\zr{\widetilde{\omega\vert m}}u>\Rvar\right\} \leq\sum_{m\in\mathbb{N}}1\left\{ \rmax^{m}>\frac{\Rvar}{\ze}\right\} \leq1+\ln\frac{\Rvar}{\ze}/\ln\rmax,\]
 \[
\frac{1}{M}\frac{M}{\ln\frac{\Rstart}{\Rvar}}\leq\frac{1}{M}\left(\frac{1}{\ln2}+\frac{1}{\left\vert \ln\rmax\right\vert }\frac{\ln\ze-\ln\Rvar}{\ln\Rstart-\ln\Rvar}\right)\leq\frac{1}{M}\left(c_{1}+c_{2}\left\vert \ln\ze\right\vert \right).\]
 Secondly, we can estimate $\hw_{\widetilde{\omega\vert m}}^{k,\var}$'s
integrand $\chunk{\R\left(x,1\right)}{\R\left(x,0\right)}\left(\frac{\ze}{\left|\bpsym^{\prime}x\right|}\right)\,\frac{\ze^{\zD-k}\:\A{\bpsym x}{\bpsym z}{\ze}}{\int\left|\bpsym^{\prime}y\right|^{\zD}\A{\bpsym y}{\bpsym z}{\ze}\, d\mmc\left(y\right)}$.
Lemma \ref{lem:(neighborhood-net)} item \eqref{enu:lem:(neighborhood-net) D-set-property}
{}``cancels'' $\ze^{\zD}$ with the denominator. Item \eqref{enu:lem:(neighborhood-net) Locality}
replaces $\Along$ in the numerator with the interior of its supporting
ball $B\left(x,a\ze\right)$: \[
\hw_{\widetilde{\omega\vert m}}^{k,\var}\left(x,\ze,1\right)\leq\chunk{\R\left(x,1\right)}{\R\left(x,0\right)}\left(\frac{\ze}{\zrp{\widetilde{\omega\vert m}}ux}\right)\, C_{\Along}\ze^{-k}C_{k}^{\var}\left(\left(\bp{\widetilde{\omega\vert m}}u\mJ\right)_{\ze},\bp{\widetilde{\omega\vert m}}u\Int B\left(x,a\ze\right)\right).\]
The estimate $\mK^{-1}\leq\zrp{\widetilde{\omega\vert m}}ux\leq\mK$
in the indicator function will help us find a finite measure $\tau$.
We apply the above preparations to \eqref{eq:uniformly integrand Thm}
and obtain the dominating function: \[
\sup_{0<\Rvar\leq\Rstart/2}\left[\frac{\hr_{\chunkh{\max\left\{ \R\left(x,0\right),\Rvar\right\} }{\Rstart}}^{k,\var}\left(x,\ze,1\right)}{\ln\frac{\Rstart}{\Rvar}}+\sum_{m\in\mathbb{N}}\frac{M}{\ln\frac{\Rstart}{\Rvar}}\frac{1\left\{ \ze\zr{\widetilde{\omega\vert m}}u>\Rvar\right\} }{M}\hw_{\widetilde{\omega\vert m}}^{k,\var}\left(x,\ze,1\right)\right]\]
 \[
\leq\left[\sup_{\Rvar}\frac{\hr_{\chunkh{\max\left\{ \R\left(x,0\right),\Rvar\right\} }{\Rstart}}^{k,\var}\left(x,\ze,1\right)}{\ln\frac{\Rstart}{\Rvar}}\right]+\left[\sup_{M}\frac{1}{M}\sum_{m<M}\left(c_{1}+c_{2}\left\vert \ln\ze\right\vert \right)\hw_{\widetilde{\omega\vert m}}^{k,\var}\left(x,\ze,1\right)\right]\]
\[
\leq\left[\sup_{\Rvar}\frac{1_{\chunkh{\max\left\{ \R\left(x,0\right),\Rvar\right\} }{\Rstart}}\left(\ze\right)C_{k}^{\var}\left(\mJ_{\ze},\Int B\left(x,a\ze\right)\right)}{\ln\frac{\Rstart}{\Rvar}\,\ze^{k}}\right]+\left[C_{\Along}\left(c_{1}+c_{2}\left\vert \ln\ze\right\vert \right)\, h\left(\ze,\omega,x\right)\right].\]
 Thirdly, the Young inequality (known from the duality of Orlicz spaces)
will be used to show the $h$-summand in the last line is integrable
w.r.t $\ze^{-1}d\ze d\mmbi\left(\omega,x\right)$. (Recall $\R\left(x,1\right)/\mK<\ze\leq\R\left(x,0\right)\mK$.)
The function $C_{\Along}\left(c_{1}+c_{2}\left\vert \ln\ze\right\vert \right)$
alone is {}``very'' integrable: By \eqref{eq:logdist is very integrable},
there is a $c_{3}>1$ such that \[
\int_{\mJ}\int_{\R\left(x,1\right)/\mK}^{\R\left(x,0\right)\mK}c_{3}^{\left\vert \ln\ze\right\vert }\frac{d\ze}{\ze}d\mmi\left(x\right)\leq\frac{c_{3}^{-\ln K}}{\ln c_{3}}\int_{\mJ}c_{3}^{-\ln\R\left(x,0\right)}d\mmi\left(x\right)<\infty.\]
 This also shows the measure $d\tau\left(\ze,\omega,x\right)\assign\ze^{-1}\chunk{\R\left(x,1\right)/\mK}{\R\left(x,0\right)\mK}\left(\ze\right)d\ze\, d\mmbi\left(\omega,x\right)$
is finite. We have concatenated $\ze\mapsto\left\vert \ln\ze\right\vert $
with $t\mapsto\Psi\left(t\right)\assign\min\left\{ t^{2},c_{3}^{t}\right\} $,
and $\Psi\left(\left\vert \ln\ze\right\vert \right)$ stays integrable.
The function $\Psi$ is convex. A straightforward computation shows
its Legendre transform $\Phi\left(t\right)\assign\int_{0}^{t}\inf\left\{ u:\Psi^{\prime}\left(u\right)>s\right\} ds$
satisfies $\lim_{t\ra\infty}\left(t\ln t\right)/\Phi\left(t\right)=\ln c_{3}$.
So $\Phi\left(\left\vert h\right\vert \right)$ is $\tau$-integrable
due to our assumption on $h$. The Young inequality for $\Psi$ and
$\Phi$ (see \textit{\cite{RaoRen91OrliczMR1113700}}) yields \[
\int\left\vert \ln\ze\right\vert h\left(\ze,\omega,x\right)d\tau\left(\ze,\omega,x\right)\leq\int\Phi\left(\left\vert h\right\vert \right)d\tau\,+\,\int\Psi\left(\left\vert \ln\ze\right\vert \right)d\tau<\infty.\]
 Due to linearity, $C_{\Along}\left(c_{1}+c_{2}\left\vert \ln\ze\right\vert \right)h\left(\ze,\omega,x\right)$
is $\tau$-integrable. So the dominating function is integrable.\end{proof}
\begin{lem}
\label{lem:sufficient conditions uniform integrability}The uniform
integrability assumption \eqref{eq:uniformly integrand Thm} of the
theorem holds if:
\begin{enumerate}
\item \label{enu:lem:sufficient-integrability essup}\[
c_{\sup}\assign\esup\limits _{\rplain>0,\xplain\in\mJ}\rplain^{-k}C_{k}^{\var}\left(\mJ_{\rplain},\Int B\left(\xplain,a\rplain\right)\right)<\infty\]
 ($\mmc$-essential in $\xplain$, Lebesgue in $\rplain$) or
\item \label{enu:lem:sufficient-integrability Minkowski} $k=d$ (Minkowski
content) or
\item \label{enu:lem:sufficient-integrability surface}$k=d-1$ (surface
content).
\end{enumerate}
\end{lem}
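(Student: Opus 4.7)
My strategy is to verify the hypotheses of Lemma \ref{lem:Zygmund space condition for uniform integrability} in each of the three cases, which then yields the required uniform integrability \eqref{eq:uniformly integrand Thm}. In every case I will produce a constant upper bound for the integrand $h$ of \eqref{eq:Zygmund space integrand}. Since the indicator inside the sum restricts the $\ze$-support of $h$ to the set $\{\R(x,1)/\mK<\ze\leq\R(x,0)\mK\}$, whose $\ze^{-1}d\ze\,d\mmbi$-measure is finite by Lemma \ref{lem:(Renewal-radii)} \eqref{enu:lem:(Renewal-radii) 6 integrable}, such an $L^{\infty}$-bound automatically places $h$ in the Zygmund space. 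The same bound $\ze^{-k}C_k^{\var}(\mJ_\ze,\Int B(x,a\ze))\leq c_{\sup}$ settles the second integrability condition of that lemma: the supremum $\sup_{\Rvar\in(0,\Rstart/2]}(\ln\Rstart/\Rvar)^{-1}=1/\ln 2$ is attained at $\Rvar=\Rstart/2$, and the remaining integral reduces to a finite multiple of $\int|\ln\R(x,0)|\,d\mmbi$.

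For item \eqref{enu:lem:sufficient-integrability essup}, the main step is to transport the essential supremum through the distortion. By Proposition \ref{pro:(Distortion-converges)} \eqref{enu:pro:(Distortion-converges) 3 similarity} the composition $\bp{\widetilde{\omega|m}}u\circ\mf{\widetilde{\omega|m}}^{-1}$ is a Euclidean similarity of ratio $\zr{\widetilde{\omega|m}}u^{-1}$, so the scaling property \eqref{eq:Ck scaling} yields
\[
C_k^{\var}\bigl((\bp{\widetilde{\omega|m}}u\mJ)_\ze,\,\bp{\widetilde{\omega|m}}u\Int B(x,a\ze)\bigr)=\zr{\widetilde{\omega|m}}u^{-k}\,C_k^{\var}\bigl((\mf{\widetilde{\omega|m}}\mJ)_\rho,\,\mf{\widetilde{\omega|m}}\Int B(x,a\ze)\bigr)
\]
for $\rho\assign\ze\zr{\widetilde{\omega|m}}u$. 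On the support of the indicator of \eqref{eq:Zygmund space integrand}, \eqref{eq: abs psi prime} and Lemma \ref{lem:(Renewal-radii)} \eqref{enu:lem:(Renewal-radii) covariance} give $\rho\leq\zr{\widetilde{\omega|m}}x\,\R(x,0)=\R(\mf{\widetilde{\omega|m}}x,m)$, so Lemma \ref{lem:(Renewal-radii)} \eqref{enu:lem:(Renewal-radii) 4 intersection mJ} prevents any level-$m$ overlap inside the ball in question, and locality \eqref{eq:Ck locality} allows me to replace $\mf{\widetilde{\omega|m}}\mJ$ by $\mJ$. The bounded-distortion inclusion $\mf{\widetilde{\omega|m}}\Int B(x,a\ze)\subseteq\Int B(\mf{\widetilde{\omega|m}}x,a\mK\Kf\rho)$ from \eqref{eq:BDP.3}, combined with the essential supremum (valid for the constant $a\mK\Kf$ as well as for $a$ by a standard geometric covering argument), bounds the right hand side by a finite multiple of $c_{\sup}\rho^k$. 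Summing, Cesaro averaging, and dividing by $\ze^k=\rho^k\zr{\widetilde{\omega|m}}u^{-k}$ then yields $h\leq\mathrm{const}\cdot c_{\sup}$ uniformly.

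Items \eqref{enu:lem:sufficient-integrability Minkowski} and \eqref{enu:lem:sufficient-integrability surface} both reduce to item \eqref{enu:lem:sufficient-integrability essup}. For $k=d$, the very definition of $C_d$ gives $C_d(\mJ_\ze,\Int B(x,a\ze)\times S^{d-1})\leq\mathcal{L}^d(\Int B(x,a\ze))=\omega_d(a\ze)^d$, so $c_{\sup}\leq\omega_d a^d$. For $k=d-1$, the $\Rd$-projection of $C_{d-1}(\mJ_\ze,\cdot\times S^{d-1})$ equals $\tfrac12\mathcal{H}^{d-1}$ restricted to $\partial\mJ_\ze$, and a classical Stacho-type bound (following from \cite{Stacho76VolumeParallelSetsMR0442202}, or more directly from the coarea formula applied to the $1$-Lipschitz function $\dist(\cdot,\mJ)$ on a slightly enlarged ball) gives $\mathcal{H}^{d-1}(\partial\mJ_\ze\cap\Int B(x,a\ze))\leq c(d,a)\ze^{d-1}$ for almost every $\ze$, producing a finite $c_{\sup}$. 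The main technical obstacle I anticipate is the careful bookkeeping of the bounded-distortion constants in item \eqref{enu:lem:sufficient-integrability essup}; conceptually, however, the argument is nothing more than the covariance-and-locality technique already exploited in Lemma \ref{lem:preimages of hr, hw curvdens}.
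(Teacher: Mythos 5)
Your proposal is correct and follows essentially the same route as the paper: reduce to Lemma~\ref{lem:Zygmund space condition for uniform integrability} by bounding the function $h$ of \eqref{eq:Zygmund space integrand} pointwise by a constant, observe that the indicator restricts the $\ze$-support to a set of finite $\ze^{-1}d\ze\,d\mmbi$-measure by Lemma~\ref{lem:(Renewal-radii)}~\eqref{enu:lem:(Renewal-radii) 6 integrable}, and then note a constant lies in every Zygmund space. The paper's proof of item~\eqref{enu:lem:sufficient-integrability essup} just cites the scaling-and-locality machinery of Lemma~\ref{lem:preimages of hr, hw curvdens} and asserts $\rhat^{-k}C_k^{\var}((\bp{\widetilde{\omega|m}}u\mJ)_{\rhat},\bp{\widetilde{\omega|m}}u\Int B(\xhat,a\rhat))=\rplain^{-k}C_k^{\var}(\mJ_{\rplain},\Int B(\xplain,a\rplain))\leq c_{\sup}$; you correctly observe that the nonlinear $\mf{\widetilde{\omega|m}}$ does not send balls to balls, so this is really an inequality up to enlarging $a$ by a bounded-distortion factor, and you close the gap with a covering argument showing the essential supremum with $a$ controls that with $a\mK\Kf$ — a detail the paper glosses over. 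For $k=d-1$ the paper defers to \cite[Remark 3.2.2]{RatajZaehle10CurvatureDensitiesSelfSimilarSets}, while you sketch the underlying Stacho/coarea bound directly; both are acceptable. Your remark that the supremum over $\Rvar$ equals $1/\ln 2$ is slightly imprecise (it is $1/\ln(\Rstart/\ze)\leq 1/\ln 2$ when $\ze\leq\Rstart/2$), but the bound you actually use, $\leq 1/\ln 2$, is what matters, so this does not affect the argument.
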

\begin{proof}
We will reduce the first assertion to the preceding Lemma \ref{lem:Zygmund space condition for uniform integrability}.
Define $\xplain\assign\mf{\widetilde{\omega|n}}\xhat$, $\rplain\assign\rhat\zr{\widetilde{\omega|n}}u$.
The proof of Lemma \ref{lem:preimages of hr, hw curvdens} (localizing
and scaling $\hw_{\widetilde{\omega\vert n}}^{k,\var}=\hr_{\dots}^{k,\pm}$)
shows \[
\rhat^{-k}C_{k}^{\var}\left(\left(\bp{\widetilde{\omega\vert m}}u\mJ\right)_{\rhat},\bp{\widetilde{\omega\vert m}}u\Int B\left(\xhat,a\rhat\right)\right)=\rplain^{-k}C_{k}^{\var}\left(\mJ_{\rplain},\Int B\left(\xplain,a\ze\right)\right)\leq c_{\sup}\]
 whenever $\rhat\leq\zrp{\widetilde{\omega\vert m}}u{\xhat}\R\left(\xhat,0\right)$.
This and then $\mK^{-1}\leq\zrp{\widetilde{\omega\vert m}}u{\xhat}\leq\mK$
imply for the preceding Lemma's $h$ \eqref{eq:Zygmund space integrand},
\[
h\left(\rhat,\omega,\xhat\right)\leq\sup_{M>0}\frac{1}{M}\sum_{m=0}^{M-1}c_{\sup}\,\chunk{\R\left(\xhat,1\right)}{\R\left(\xhat,0\right)}\left(\frac{\rhat}{\zrp{\widetilde{\omega\vert m}}u{\xhat}}\right)\leq c_{\sup}\,\chunk{\R\left(\xhat,1\right)/\mK}{\R\left(\xhat,0\right)\mK}\left(\rhat\right).\]
The measure $d\tau\left(\rhat,\omega,\xhat\right)\assign\rhat^{-1}\chunk{\R\left(\xhat,1\right)/\mK}{\R\left(\xhat,0\right)\mK}\left(\rhat\right)d\rhat\, d\mmbi\left(\omega,\xhat\right)$
is finite, see Lemma \ref{lem:(Renewal-radii)} \eqref{enu:lem:(Renewal-radii) 6 integrable}.
As a constant function, $c_{\sup}$ belongs to its Zygmund space.

The other assertions reduce to the first. In case $k=d$ (volume),
the supremum condition is trivial. The case $k=d-1$ (surface area)
is proved in \cite[Remark 3.2.2]{RatajZaehle10CurvatureDensitiesSelfSimilarSets}.
(Although stated only for self-similar $\mJ$, the proof works for
general compact sets.)
\end{proof}

\section{Appendix: Vector Toeplitz theorem}
\begin{lem}
\label{lemresummation}Let $\alpha_{n},\alpha_{\infty}$ be finite,
signed Borel measures on $\Rd$, $n\in\mathbb{N}$, such that the
weak{*}-limit (point-wise limit on continuous functions) \[
\op{wlim}\limits _{n\ra\infty}\alpha_{n}=\alpha_{\infty}\]
 exists. Let $b_{n},b_{\infty}$ be a bounded family of equicontinuous
functions whose averages converge in supremum norm \[
\lim_{N\ra\infty}\frac{1}{N}\sum_{n\leq N}b_{n}=b_{\infty}.\]
 Then the following limit exists: \[
\lim_{N\ra\infty}\frac{1}{N}\sum_{n\leq N}\alpha_{n}\left(b_{n}\right)=\alpha_{\infty}\left(b_{\infty}\right).\]
 \end{lem}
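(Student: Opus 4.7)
The plan is a standard $\varepsilon/3$-style splitting. Set $\gamma_n \assign \alpha_n - \alpha_\infty$, and decompose
\[
\frac{1}{N}\sum_{n\le N}\alpha_n(b_n) - \alpha_\infty(b_\infty) \;=\; \underbrace{\frac{1}{N}\sum_{n\le N}\gamma_n(b_n)}_{I_N} \;+\; \underbrace{\alpha_\infty\!\left(\frac{1}{N}\sum_{n\le N}b_n \,-\, b_\infty\right)}_{II_N}.
\]
The term $II_N$ is straightforward: $\alpha_\infty$ is a fixed finite signed measure with $|\alpha_\infty(g)|\le\|\alpha_\infty\|\,\|g\|_\infty$, and by hypothesis $N^{-1}\sum b_n\to b_\infty$ in supremum norm, so $II_N\to 0$.

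The real work is in $I_N$. First invoke Banach--Steinhaus on the continuous linear functionals $g\mapsto\alpha_n(g)$ on the space of bounded continuous functions: weak$^*$ convergence of $\alpha_n$ gives a uniform bound $C\assign\sup_n\|\alpha_n\|<\infty$, hence $\|\gamma_n\|\le 2C$. Next, fix $\varepsilon>0$. Because $\{b_n,b_\infty\}$ is bounded and equicontinuous, by the Arzel\`a--Ascoli theorem (applied on a common compact set carrying all the $\alpha_n$ and $\alpha_\infty$ — in the application each $\alpha_n$ is the curvature measure of a parallel set of $\bp{\widetilde\omega|n}u\mJ$, all contained in a fixed bounded neighborhood of $\mJ$) the family is totally bounded in supremum norm. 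Choose a finite $\varepsilon$-net $g_1,\dots,g_K$ and for each $n$ pick $j(n)$ with $\|b_n-g_{j(n)}\|_\infty\le\varepsilon$. Then for every $n$,
\[
|\gamma_n(b_n)| \;\le\; |\gamma_n(g_{j(n)})| + 2C\varepsilon.
\]

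Because the net is finite and $\gamma_n(g_j)\to 0$ for each individual $j$ by weak$^*$ convergence, there exists $M=M(\varepsilon)$ such that $|\gamma_n(g_j)|<\varepsilon$ for all $n\ge M$ and all $j\in\{1,\dots,K\}$. Combining,
\[
|I_N| \;\le\; \frac{1}{N}\sum_{n<M} 2C\,\sup_m\|b_m\|_\infty \;+\; \frac{N-M}{N}\,(1+2C)\varepsilon,
\]
so $\limsup_{N\to\infty}|I_N|\le(1+2C)\varepsilon$. Since $\varepsilon>0$ was arbitrary, $I_N\to 0$, completing the proof.

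The only delicate point is ensuring total boundedness of the family $\{b_n\}$ in sup norm; this is where equicontinuity is essential (pointwise equicontinuity plus uniform boundedness on a compact set yields relative compactness in $C$). If one wishes to avoid implicitly restricting to a compact set, one can instead establish tightness of $\{\alpha_n\}$ from weak$^*$ convergence on a suitable space of test functions and truncate the $b_n$ outside a large ball; the bookkeeping is routine once equicontinuity is in hand.
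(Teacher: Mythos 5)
Your proof is correct but takes a genuinely different route from the paper's. The paper reduces to one dimension, approximates each $b_n$ by a Bernstein polynomial $\sum_{t\in T}p_t b_n(t)$ on a fixed finite set $T$ of nodes (the key observation being that the quality of Bernstein approximation is governed by the modulus of continuity, hence is \emph{uniform} over the equicontinuous family), and then for each fixed node $t$ observes that the claim becomes a scalar Toeplitz-type statement about the sequences $\alpha_n(p_t)$ and $b_n(t)$, for which it cites Werner's version of Toeplitz's theorem. You instead split off the manageable term $\alpha_\infty\bigl(N^{-1}\sum b_n - b_\infty\bigr)$ and handle the remainder $N^{-1}\sum(\alpha_n-\alpha_\infty)(b_n)$ by Arzel\`a--Ascoli: equicontinuity plus boundedness on a compact set gives a finite sup-norm $\varepsilon$-net $\{g_j\}$, and on each net element $(\alpha_n-\alpha_\infty)(g_j)\to 0$, so a standard $\varepsilon/3$ estimate finishes. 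What each buys: the paper's Bernstein route keeps the argument close to the scalar Toeplitz theorem it explicitly invokes; yours is more self-contained (no dimension reduction, no polynomial approximation theory, no summability-method machinery) and works directly in $\Rd$. Both proofs share the same \emph{implicit} restriction to a common compact support — you flag this explicitly; the paper says the same thing with ``all measures shall be supported in $[x,y]$, neglecting tails as the family is tight.'' Without that restriction, bounded equicontinuity does not give total boundedness in sup norm on all of $\Rd$ (e.g.\ translates of a fixed bump), so the compactness remark is genuinely needed in both arguments. The only minor bookkeeping slip in your write-up is that the first term in the final bound on $I_N$ should be $\tfrac{M}{N}\bigl(|\gamma_n(g_{j(n)})|+2C\varepsilon\bigr)\le\tfrac{M}{N}(2C\sup_m\|b_m\|_\infty + 2C\varepsilon)$ or simply $\tfrac{M}{N}\cdot 2C\sup_m\|b_m\|_\infty$; this does not affect the conclusion since it vanishes as $N\to\infty$.
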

\begin{proof}
We will prove%
\footnote{A direct proof is easier than checking the topological assumptions
of \cite{CoxPowell69SummabilityLinearTopologicalSpacesMR0243235}.%
} the assertion in only one dimension for simplicity and because product
functions are dense. All measures shall be supported in $\left[x,y\right]$,
neglecting tails as the family is tight. Due to the Banach-Steinhaus
theorem, $\sup_{n}\left\Vert \alpha_{n}\right\Vert _{C^{\prime}\left[x,y\right]}<\infty$.
Since the quality of approximation is determined by the norm and the
modulus of continuity only (\cite[Thm. 1.2.10]{WernerFunkanaMR1787146}),
we can equiuniformly approximate the $b_{n}$ with Bernstein polynomials
$p_{t}$ using a fixed, finite set $T$ of points,\[
\left\Vert b_{n}-\sum_{t\in T}p_{t}b_{n}\left(t\right)\right\Vert _{C\left[x,y\right]}<\epsilon.\]
By combining both estimates, we will be finished once we can show
\[
\lim_{N\ra\infty}\frac{1}{N}\sum_{n\leq N}\alpha_{n}\left(\sum_{t\in T}p_{t}b_{n}\left(t\right)\right)=\alpha_{\infty}\left(\sum_{t\in T}p_{t}b_{\infty}\left(t\right)\right).\]
This reduces the assertion to the sequences of reals: consider $a_{n}\assign\alpha_{n}\left(p_{t}\right)$,
$b_{n}\left(t\right)$ for each point $t\in T$ separately. There,
it is a special case of Toeplitz's theorem \cite[Thm. 4.4.6]{WernerFunkanaMR1787146}
for the resummation method $1_{\left\{ n\leq N\right\} }b_{n}/N$
applied to $a_{n}$. 
\end{proof}

\section{Appendix: Example}

The group of Möbius transformations of $\Rd$ is generated by similarities
and the Möbius inversion $x\mapsto x/\left|x\right|^{2}$. The Möbius
transformation $\mf{\,}$ specified by its pole $p\in\Rd\cup\left\{ \infty\right\} $
and the derivative $rA$ ($r>0$, $A\in O\left(d\right)$) at its
fixed point $f\in\Rd$ is given by \begin{eqnarray}
\mf{\,}x & = & f+rA\left(x-f\right)+O\left(\left|x-f\right|^{2}\right)\nonumber \\
 & = & f+rA\left(\op{id}-2\op{Proj}_{\frac{p-f}{\left|p-f\right|}}\right)\left(\frac{\left|p-f\right|^{2}}{\left|p-x\right|^{2}}\left(x-p\right)+\left(p-f\right)\right),\label{eq:mobius transformation}\end{eqnarray}
 where $\op{Proj}_{y}$ denotes the orthogonal projection onto $y$.

The example in the introduction is generated by three Möbius maps.
They agree with those of the classical Sierpinski triangle up to first
order at their fixed points, except we reduce the contraction ratio
to re-obtain the Open Set Condition. All three maps $\mf i$ share
$r_{i}=785/2000$, $A_{i}=\id$ for $i\in\mI=\left\{ 1,2,3\right\} $.
But $f_{1}=\left(0,0\right)$, $p_{1}=\left(3,\frac{5}{2}\right)$
and $f_{2}=\left(1,0\right)$, $p_{2}=\left(-3,-2\right)$ and $f_{3}=\left(\frac{1}{2},\frac{\sqrt{3}}{2}\right)$,
$p_{3}=\left(-2,-4\right)$ differ.

\bibliographystyle{amsalpha}
\addcontentsline{toc}{section}{\refname}\bibliography{citations_bohl}

\end{document}